\numberwithin{equation}{section}
\newtheorem{theorem}{Theorem}[section]
\newtheorem{lemma}[theorem]{Lemma}
\newtheorem{remark}[theorem]{Remark}
\newcommand{\R}{\mathbb{R}}
\newcommand{\norm}[1]{\left\Vert#1\right\Vert}
\newcommand{\abs}[1]{\left\vert#1\right\vert}
\newcommand{\set}[1]{\left\{#1\right\}}
\newcommand{\para}[1]{\left(#1\right)}
\newcommand{\p}{\partial}
\newcommand{\LL}{\mathcal{L}}
\newcommand{\w}{\mathbf{w}}
\newcommand{\A}{A}
\newcommand{\g}{G}
\newcommand{\z}{\mathbf{z}}
\newcommand{\V}{\mathbf{v}}
\newcommand{\U}{\mathbf{u}}
\newcommand{\f}{\mathbf{f}}
\newtheorem{corollary}[theorem]{Corollary}
\begin{document}
\title{Stable reconstruction of the volatility in a regime-switching local volatility model}
\author{M. Bellassoued, R. Brummelhuis, M. Cristofol, E. Soccorsi}
\address{Mourad Bellassoued, University of Tunis El Manar, National Engineering School of Tunis, ENIT-LAMSIN, B.P. 37, 1002 Tunis, Tunisia. e-mail \texttt{mourad.bellassoued@enit.utm.tn } }
\address{Raymond Brummelhuis,   
Laboratoire de Math\'ematiques de Reims, Universit\'e de Reims, EA 4535, France. e-mail: \texttt{\ raymondus.brummelhuis@univ-reims.fr} }   
\address{Michel Cristofol, Institut de Math\'{e}matiques de Marseille, CNRS,
UMR 7373, \'Ecole Centrale, Aix-Marseille Universit\'e, 13453 Marseille,
France. e-mail: \texttt{\ michel.cristofol@univ-amu.fr} }   
\address{Eric Soccorsi, Aix-Marseille Univ., Universit\'e de Toulon, CNRS, CPT,  Marseille,
France.  e-mail: \texttt{\ eric.soccorsi@univ-amu.fr} }     
\date{}   
 
\begin{abstract}
Prices of European call options in a regime-switching local volatility model can be computed by solving a parabolic system which generalises the classical Black and Scholes equation, giving these prices as functionals of the local volatilities. We prove Lipschitz stability for the inverse problem of determining the local volatilities from quoted call option prices for a range of strikes, if the calls are indexed by the different states of the continuous Markov chain which governs the regime switches.   
\end{abstract}

\maketitle   

\section{Introduction and main result}
\label{sec-intro}

As is known since the fundamental work of Black, Scholes and Merton, prices of European options and other financial derivatives can be computed by solving a final value problem for a parabolic linear partial differential equation (PDE) known as the Black and Scholes equation. The coefficients of this PDE will depend on the stochastic model, in the form of a stochastic differential equation or SDE driven by one or more Brownian motions, for the underlying asset of the derivative. Conversely, for derivative contracts which are liquidly traded, one would like to use the market-quoted derivative prices to infer information about the (parameters of the) underlying stochastic model. Once the latter are known, the model can be used to price less liquidly traded derivatives, including derivatives which are not market-quoted but privately exchanged between two parties (the so-called `Over-the-Counter' contracts). Inferring the model from observed prices is known as the {\it calibration problem}. By what we have just said, it is clear that the calibration problem for derivatives can be interpreted as an {\it inverse PDE problem}, that of determining one or more coefficients of a parabolic PDE from the observation of certain of its solutions. Consequently, the various  techniques that have been developed to study inverse PDE problems can be made to bear on calibration. This point of view was first elaborated by Bouchouev and Isakov \cite{Bou-Isa1, Bou-Isa2}, who examined the problem of reconstructing the, by assumption time-independent, local volatility function of a so-called {\it Local Volatility model} from European call option prices of a fixed maturity but arbitrary strike, these prices being observed at some given point in time (and hence for some given value of the price of the underlying asset). We recall that a Local Volatility (LV) model for the price of a risky asset is Brownian motion-driven SDE in which the diffusion coefficient is itself a function of the asset-price and possibly also of time. Bouchouev and Isakov established unicity and, under additional assumptions, stability in H\"older norms.  An essential ingredient in their work was Dupire's PDE \cite{Dup} for European calls as functions of their strikes and maturities.   

In this paper we will prove a Sobolev-norm version of Bouchouev's and Isakov's stability estimate. Moreover, we will do so for a larger class of models called {\it Regime Switching Local Volatility or RSLV models}, also known as {\it Markov Modulated Local Volatility models}. These generalise the Local Volatility models by allowing the local volatility function to jump according to an independent continuous-time Markov chain, thereby incorporating features of a Stochastic Volatility or SV model.
\medskip   
   
The stochastic ingredients of a RSLV model are a Brownian motion $(W_t )_{t \geq 0 } $ and an independent finite state continuous time Markov process $(X_t )_{t \geq 0 } $ with state space\footnote{Some authors, following \cite{EAM}, realise the states $e_i $ as the canonical basis vectors of $\mathbb{R }^n $ (which amounts to identifying $e_i $ with the function $\epsilon _i $ introduced below). Functions on $E $ can then be identified with vectors in $\mathbb{R }^n $, the value of the function on $e_i $ being the  inner product with of the corresponding vector with $e_i $, and the process $(X_t )_{t \geq 0 } $ itself can be given the form of a semi-martingale (which would not make sense for an arbitrary set $E $). } $E = \{ e_1 , \ldots , e_n \} . $ The Markov chain is specified through its generator $B = (b_{ij } )_{1 \leq i, j \leq n } $ by
\begin{equation} \label{eq:MC1}
\mathbb{P } \left( X_{t + dt } = e_i | X_t = e_j \right) = \delta _{ij } + b_{ij } dt .
\end{equation}
Here $B $ is a matrix with non-negative off-diagonal elements whose column-sums are equal to zero: $\sum _i b_{ij } = 0 . $ We will call the $b_{ij } $ the transition probability rates, or more briefly, transition rates, of the continuous-time Markov chain $X_t . $   
Please note that some authors call $b_{ji } $ what we have called $b_{ij } $, which amounts to replacing $B $ by its transpose.

To specify the RSLV model, we are further given functions $r , q : E \to \mathbb{R } $ and $\sigma : \mathbb{R }_{\geq 0 } \times E \times \mathbb{R }_{\geq 0 } \to \mathbb{R } _{> 0 } $, to be interpreted as, respectively, the, Markov-state-dependent, interest and dividend rate\footnote{Both can in principle be negative: a negative $q $ would be a cost-of-carry, applicable if we are dealing commodities, and negative interest rates have occured in the recent past. } and the, equally state-dependent, local volatility function. If we let $S_t $ be the price at time $t $ of the underlying asset of the derivative, then our model for $S_t $ is:
\begin{equation} \label{eq:RSLV1}
dS_t = (r(X_t ) - q(X_t ) ) S_{t- } dt + \sigma (S_{t- } , X_t , t ) S_{t- } dW_t ,
\end{equation}
where $S_{t- } := \lim _{\varepsilon \to 0+ } S_{t - \varepsilon } $, complemented by the following pay-out rule for the dividend: the holder of the asset $S_t $ will receive $q(X_t ) S_t dt $ in dividends over the infinitesimal interval $[t , t + dt ] . $ It is important to note that we are working directly under given a risk-neutral probability measure, and not under the objective (or statistical) probability: indeed, under (\ref{eq:RSLV1}) the total expected return of the asset given $S_t $ due to price change plus dividend over an infinitesimal time interval $[t, t + dt ] $ is $r(X_t ) dt $, the risk-free return in that interval. We will often simply write $S_t $ instead of $S_{t- } . $   
   
The model (\ref{eq:RSLV1}) is interesting even when the volatility function $\sigma $ does not depend on $S $ and $t $, in which case we are dealing with a regime switching geometric Brownian motion model. Such a model can be regarded as a  simple type of stochastic volatility model, in which the stochastic driver of the volatility is a Markov chain instead of the Brownian motion which is traditionally used for stochastic volatility models, such as the models of Hull and White, of Stein and Stein and of Heston or the SABR model. Option pricing in regime-switching diffusion models is attracting an increasing interest in the literature: we mention \cite{Bollen} as an early reference, and as more recent ones \cite{Bu_Ell}, \cite{Chin_Du}, \cite{ECS}, \cite{ECS_2013}, \cite{ECS_2014}, 
\cite{FuHoHuWa}, \cite{Xi_Ro_Ma}, \cite{ZuBaLu}.      
\medskip

One possible, and popular, economic interpretation of the model is that the state space $E $ of the Markov chain represents different global states of the economy, for example, different phases of a business cycle. In each of these phases, which correspond to the different states $X_t = e_i $ of the Markov chain, the price $S_t $  of the underlying asset evolves according to a Local Volatility model with volatility function $\sigma _i (S, t ) := \sigma (S , e_i , t ) $ and interest rate and dividend rates $r_i := r(e_i ) $, $q_i := q(e_i ) . $ For example, the risk-free base rate, as set by a Central Bank, tends to be low during a recession, and high during an economic boom, and one would expect the opposite behaviour for the dividend rate.  In case $r : E \to \mathbb{R } $ is injective, investors can effectively observe the different states of the Markov chain. If not, we are dealing with a Hidden Markov model. In this paper we will assume that the Markov chain is observable. We will moreover assume that there exist European call options with a state dependent pay-off which again allows one to distinguish the different states. This last assumption will enable us to deduce a Dupire-type system of PDEs for European call options with coefficients given by the $\sigma _i $, $r_i $, $q_i $ and $b_{ij } . $   
\medskip

Turning therefore to derivatives, consider a European derivative paying its holder an amount of $F(S_T , X_T ) $ at maturity $T $, where $F : \mathbb{R }_{> 0 } \times E \to \mathbb{R} $ is a given function called the derivative's pay-off. (If $F (S_T , X_T ) $ takes on negative values these are interpreted as the amount the holder of the derivative will have to pay to the seller of the derivative, at maturity). By the fundamental theorem of asset pricing, and since the process $(S_t , X_t ) $ is Markovian, the value of the derivative at an earlier time $t < T $ is given by
\begin{equation}
V (S_t , X_t , t ) := \mathbb{E } \left( \, e^{ - \int _t ^T r(X_u ) du } F (S_T , X_T ) \, | S_t , X_t \right) ,
\end{equation}   
where the expectation is with respect to the risk-neutral measure and where $V $ is a real-valued function on $\mathbb{R }_{> 0 } \times E \times \mathbb{R } . $ If we let $V_j (S , t ) := V(S, e_j , t ) $, then an application of Ito's lemma shows that the $V_j $'s satisfy the following Black and Scholes-type system of PDEs for $t < T $:
\begin{equation} \label{eq:RSLV_BSE1}
\partial _t V_j + \frac{1 }{2 } \sigma _j (S, t) ^2 S^2 \partial _S ^2 V_j + (r_j - q_j ) S \partial _S V_j + \sum _i b_{ij } V_i = r_j V_j , \ \ j = 1 , \ldots , n .
\end{equation}
It is convenient to introduce the {\it row}-vector ($=1 \times n $ matrix) $\underline{V }(S, t ) := \left(V_1 (S , t ) , \ldots , V_n (S , t ) \right) $ as well as the diagonal $n \times n $-matrices $\Sigma (S, t ) := {\rm diag } \left( \sigma _1 (S, t ) , \ldots \sigma _n (S, t  ) \right) $, $R  := {\rm diag }(r_1 , \ldots , r_n ) $ and $Q := {\rm diag }(q_1 , \ldots , q_n ) $ and write the system as      
\begin{equation} \label{eq:RSLV_BSE2}
\underline{V }_t + \frac{1 }{2 } S^2 \underline{V }_{SS } \, \Sigma (S, t )^2 + S \underline{V}_S \, (R - Q ) + \underline{V } \, (B - R ) = 0 ,   
\end{equation}
where $\underline{V }_t := \partial _t \underline{V } $ and similar for $\underline{V }_S $ and $\underline{V }_{SS } $ and where $B $ is the matrix of transition probabilities of the Markov chain which was already defined above. (Note that these matrices multiply $\underline{V } $ and its derivatives on the right.)   
This system has to be supplemented by the final boundary condition
\begin{equation} \label{eq:RSLV_BSE3}
\underline{V }(S, T ) = \underline{F }(S ) ,   
\end{equation}
where $\underline{F }(S) $ is the row-vector $(F_1 (S ) , \ldots , F_n (S) ) $ with $F_j (S) := F (S , e_j ) . $ If $n = 1 $, the Markov chain is trivial, and (\ref{eq:RSLV_BSE1}) reduces to the classical Black and Scholes equation for a Local Volatility model.   
\medskip   

We briefly discuss unique solvability of (\ref{eq:RSLV_BSE2}) - (\ref{eq:RSLV_BSE3}). We will assume that there exists two positive constants $\sigma _{min } $ and $\sigma _{max } $ such that   
\begin{equation}
\label{1.2}
0 < \sigma_{\min} \leq \sigma _j (S, t ) \leq \sigma_{\max} < \infty ,   
\end{equation}   
for all $j = 1 , \cdots , n $ and $S, t  > 0 . $ In particular, the system (\ref{eq:RSLV_BSE2}) will be parabolic. Since $S $ is constrained to be positive we would, at first sight, need an additional boundary condition on $S = 0 . $ Note, however, that the principal symbol of (\ref{eq:RSLV_BSE2}) is not uniformly parabolic  on $\{ S > 0 \} $, but degenerates at the boundary, and a  logarithmic transformation $S = e^x $ and time-reversal $\tau := T - t $ transforms our problem into a Cauchy problem for a uniformly parabolic system on $\mathbb{R }^n \times \mathbb{R }_{> 0 } $: if we let $\underline{v } (x, \tau ) := \underline{V } (e^x , T - \tau ) $ and $A(x, \tau ) := \frac{1 }{2 } \Sigma ^2 (e^x , T - \tau )  = {\rm diag } (a_1 (x, t ) ,  \ldots , a_n (x, t ) ) $ with $a_j (x, \tau ) = \sigma _j (e^x , T - \tau ) $, then\begin{equation} \label{eq:RSLV_BSE4}   
\left \{ \begin{array}{ll}   
\underline{v }_{\tau }  = \underline{v }_{xx } \, A (x, \tau ) + \underline{v }_x \, \left( R - Q - A(x, \tau ) \right) + \underline{v } \, (B - R ) = 0 \\   
\underline{v } (x, 0 ) = \underline{f }(x) ,   
\end{array}   
\right.   
\end{equation}   
where $\underline{f }(x) := \underline{F }(e^x ) . $ By the classical theory of parabolic systems (see for example \cite{Eidelman} or \cite{F}) if, in addition to (\ref{1.2}), the $a_j (x, \tau ) $ are continuous,  bounded and $C^{\alpha } $ in $x $ for some $0 < \alpha < 1 $, uniformly in $(x, \tau ) $, and if there exist constants $c, C > 0 $ such that   
\begin{equation} \label{eq:RSLV_BSE5}
|f_j (x) | \leq C e^{c|x|^2 } , \ \ j = 1 , \ldots , n ,
\end{equation}
then (\ref{eq:RSLV_BSE4}) will have a unique\footnote{Since our system is diagonal in the first and second order terms, we can use the maximum principle (more precisely, the Phragmen - Lindel\"of principle) to prove uniqueness. For more general systems, uniqueness follows from solvability of the adjoint system, but for that the coefficients would need to be $C^{2, \alpha } . $ }  solution subject to the  components of  $\underline{v } $ also satisfying (\ref{eq:RSLV_BSE5}), uniformly in $\tau . $ We note that assuming (\ref{1.2}), the H\"older condition on $A(x, \tau ) $ is equivalent to   
$$
\sup _{S_1 , S_2 , \tau > 0 } \frac{|\sigma _j (S_1 , \tau ) - \sigma _j (S_2 , \tau ) | }{|\log (S_2 / S_1 ) |^{\alpha } } < \infty , \ \ j = 1 , \ldots , n .   
$$    
\medskip   
   
We next specialise to a class of generalised European call options whose pay-offs are, by definition, of the form $F(S , X ) := \max (S - K , 0 ) \pi (X ) $, where $\pi : E \to \mathbb{R } $ is a given function. The pay-off at $T $ therefore in principle depends on the Markov chain state the economy will be in at $T . $ Including the pay-off parameters $K $, $\pi $ and $T $ in the notation, we will denote the resulting call option prices by row-vectors   
$$   
\underline{C } (K, T, \pi ; S , t ) = \left( C_j (K, T, \pi ; S, t ) \right)_{1 \leq j \leq n } ,   
$$   
whose $j $-th component is the price at $t $ when the state of the Markov chain at $t $ is $e_j $:   
$$   
C_j (K, T, \pi ; S, t ) := C(K , T, \pi ; S , e_j , t ) .   
$$   
Here, the option parameters $K $, $T $, $\pi $ should be thought of as forward variables. As a function of $S $ and $t $, the vector   
$\underline{C } (K, T, \pi ; S , t ) $ solves (\ref{eq:RSLV_BSE2})-(\ref{eq:RSLV_BSE3}) with $F_j (S ) := \pi _j \max (S - K , 0 ) $, where $\pi _j := \pi (e_j ) . $ Since the pay-off of the option at $T $ is between $(\min _j \pi _j ) S $ and $(\max _j \pi _j ) S $, the same has to be true for each of the $C_j $, since otherwise the market would present arbitrage opportunities. In particular, the $ | C_j (e^x, t ) | \leq {\rm Const.} \, e^x $ and $\underline{C } (K, T , \pi ; S , t ) $ is the unique solution of (\ref{eq:RSLV_BSE2}) with final pay-off $\max (S - K , 0 ) \underline{\pi } $, where $\underline{\pi } $ is the row vector with components $\pi _j . $   
\medskip   
   
We now suppose that sufficiently many of such generalised call options are traded, in the sense that the associated $\underline{\pi } $'s will span $\mathbb{R }^n . $ By taking suitable linear combinations, we can then take $\pi = \epsilon _i $, $i = 1 , \ldots , n $, where  $\epsilon _i : E \to \mathbb{R } $ is defined by
$$   \epsilon _i (e_j ) = \delta _{ij } ;
$$
equivalently, $\epsilon _i $ is the indicator function of the singleton subset $\{ e_i \} \subset E . $ The associated call option will pay out $\max (S_T - K , 0 ) $ if the economy is in state $i $ at $T $,  and nothing if the economy finds itself in one of the other states. Such securities effectively provide a set of Arrow-Debreu securities for the economy, allowing investors to not only differentiate between different price levels of the underlying asset $S $ at maturity $T $, but also between the different Markov-chain states. A classical European call, which corresponds to taking for $\pi $ the function which is identically 1, does not allow this, since it will pay off the identical amount $\max (S_T - K , 0 ) $ regardless of the state $X_T . $

It is convenient to collect all possible call option values into a matrix-valued function $\mathcal{C }(K, T ; S , t ) = \left( C_{ij } (K, T ; S , t ) \right) _{1 \leq i, j \leq n } $, where
\begin{equation}
C_{ij } (K , T ; S , t ) = C (K, \epsilon _i , T ; S , e_j , t ) ,   
\end{equation}   
is the value at $t $ of the generalised call with pay-off $\max (S - K , 0 ) \epsilon _j $ at $T $, when $S_t = S $ and $X_t = e_j . $ This function   
$\mathcal{C } $ then satisfies
\begin{equation} \label{eq:RSLV_BSE6a}   
\left \{ \begin{array}{cc}   
\mathcal{C }_t + \frac{1 }{2 } S^2 \mathcal{C }_{SS }   
\Sigma (S , t )^2 + S \mathcal{C }_S   
(R - Q ) + \mathcal{C }      
(B - R ) = 0 \\   
\mathcal{C } (K, T ; S , T ) = \max (S - K , 0 ) I_n ,   
\end{array}   
\right.   
\end{equation}
with $I_n $ the $n \times n $ identity matrix. We will see in section 2 below that, as function of $K $ and $T $, $\mathcal{C } $ satisfies a closely related system which we will call the Dupire system and which reduces to the well-known Dupire equation when $n = 1 . $   

The instantaneous volatility function $\sigma (S, X, t ) $ (momentarily reverting to the notation (\ref{eq:RSLV1})) cannot be directly observed, though we can in principle infer the realised volatilites $\int _t ^{t + h } \sigma (S_u , X_u , u )^2 du $, as a process in $t $, from historical data, for different time-windows $h . $ Inferring $\sigma (S, X, t )^2 $ from this is a non-trivial statistical problem and would, at best, provide information about this function for values of $S > 0 $ and $X \in E $ which have been realised sufficiently often. This might not include values which are relevant for the, by their nature future-looking, call options. As regards the latter, the Regime-Switching Black-Scholes model (\ref{eq:RSLV_BSE6a}) defines European call prices as a (non-linear) function of $\Sigma $, and if European call prices are liquidly quoted in the market, one may hope that such call prices determine the volatility. This is indeed the message of the classical Dupire equation, and of its generalisation to a RSLV model in section 2 below, in the ideal case that European call option prices quoted for the full range of possible maturities, strikes, and final Markov-chain states. In mathematical terms this means that the volatility can be identified from call option prices in a RSLV model. In practice, call prices will of course only be available for a finite number of strikes and maturities, and a naive use of Dupire's equation, in combination with some interpolation procedure to construct prices for the missing strikes and maturities, often leads to numerically unstable reconstructed volatility functions. This is not surprising since we are dealing with a typical ill-posed inverse PDE problem. Is is therefore of interest to dispose of some type of stability estimate on which to base a sound numerical procedure. Establishing such an estimate is the main objective of this paper.   
\medskip

We will suppose from now on,  following \cite{Bou-Isa1}, \cite{Bou-Isa2}, that the volatility is a function of $S $ only, $\Sigma (S ) = {\rm diag} \left( (\sigma _1 (S ) , \ldots , \sigma _n (S) \right) $: available maturities for which option prices are available are typically sparse, and one can concentrate on option prices with a single maturity, to construct volatility functions which piece-wise constant as function of time: cf. \cite{Bou-Isa1} and \cite{Bou-Isa2}.   

 To state the necessary smoothness conditions on $\Sigma $, we introduce the space $C^k _b (\mathbb{R } ) $ of $k $-times continuously differentiable functions with uniformly bounded derivatives and its subspace $C_b ^{k , \alpha } (\mathbb{R } ) $ of functions whose derivative of order $k $ in addition satisfies a uniform H\"older condition of order $\alpha \in (0, 1 ) . $ We also need the Sobolev spaces $H^1 (I) , H^2 (I ) $ where $I \subset \mathbb{R } $ is an open interval. Vector- and matrix-valued functions will be said to belong to these spaces if all of their components do. As concerns the norms of the latter, we will use the following notational convention  : for any Hilbert or Banach space $X $ equipped the norm $\|\cdot\|_X$, and any $\z=(z_1,\dots,z_n ) \in X^n $ we put
\begin{equation} \label{eq:convention_vector-norm}   
\|\z\|_X^2 := \sum_{j=1 }^n \|z_j\|_X^2 ,   
\end{equation}   
instead of the more precise but more cumbersome $\| \z \| _{X^n . } $    
\medskip

We will also need the continuous-time Markov chain $(X_t )_{t \geq 0 } $ to be {\it irreducible}, which means that for all $i \neq j $ there exists a $t > 0 $ such the transition probability $\mathbb{P }(X_t = e_i | X_0 = e_j ) > 0 $: see for example \cite{Davies}. The matrix of transition probabilities is known to be $e^{t B } $ (as follows by integrating the forward Kolmogorov equation), and irreducibility of the Markov chain can be shown to be equivalent to all of the matrix elements of $e^{tB } $ being strictly positive for all $t > 0 $, which is the form in which we will use it. A Markov chain will certainly be irreducible if $b_{ij } > 0 $ for all $i \neq j $, but this condition is not necessary. 
\medskip   
   
Let $C_{\Sigma } (K , T ; S , t ) $ be the matrix of generalised European call options defined by (\ref{eq:RSLV_BSE6a}), where we make the dependence on the volatility matrix $\Sigma $ explicit. We seek to stably reconstruct $\Sigma (S ) $ on some given interval $I \subset (0, \infty ) $ from observed call prices of a fixed maturity $T $ and with strikes $K $ varying in a slightly larger interval $J \Supset I . $ If we observe these call prices at some fixed time $t^* $ at which the Markov chain will be state $X_{t^* } = e_{j^* } $, then we will know the $j^* $-th column of $C_{\Sigma } (K , T ; S^* , t^* ) $ where $S^* := S_{t^* } $, that is,   
\begin{equation}\label{1.6}
C_{\Sigma } ^* (K, \epsilon _i ) := C_{\Sigma } (K , T , \epsilon _i ; S^* , e_{j^* } , t^* ) , \ \ i = 1, \ldots , n .   
\end{equation}   
The first main result of this paper is the following local stability estimate which in the case of $n = 1 $ should be compared with Theorem 1 of \cite{Bou-Isa2}, which gives a similar estimate for the $C^{\alpha } $-norm of $\Sigma _1 - \Sigma _2 $ in terms of the $C^{2, \alpha } $-norm of $C^*_{\Sigma_1 } - C^*_{\Sigma_2 } $. 
Such a stability inequality is helpful for designing the iteration method used in the numerical reconstruction of the unknown volatility matrix, see e.g. \cite{VdHQiu12}. We recall our simplified notation  (\ref{eq:convention_vector-norm}) for norms of elements of vector-valued function spaces. 
   
\begin{theorem}\label{T1.1}
Suppose that the Markov chain $(X_t )_{t \geq 0 } $ is irreducible, and that the coefficients of the (diagonal) matrix   
$\Sigma _1 (e^y ) $ are in $C_b ^1 (\mathbb{R } ) $, while those of   
$\Sigma _2 (e^y ) $ are in $C_b ^{2, \alpha } (\mathbb{R } ) $ for some $\alpha > 0 $,   
with both satisfying (\ref{1.2}). Assume that
\begin{equation} \label{eq:thm1.1}
\Sigma_1 = \Sigma _2 \ \mbox{on $\mathbb{R } \setminus I $, }
\end{equation}
on some bounded interval $I \Subset (0, \infty ) . $ Then for any bounded open interval $J \Subset (0, \infty ) $ such that $I \Subset J $ there exist a constant $C>0 $, depending only on $\sigma _{\rm min } $, $||\Sigma _1 ||_{C^1 (J) } $, $||\Sigma _2 ||_{C^{2, \alpha } (\mathbb{R } ) } $, $I$, $J$ and $\tau _* := T - t^* $, such that we have
\begin{equation} \label{eq:thm1.2}
\norm{\Sigma_1 - \Sigma_2}_{L^2(I) } ^2 \leq C \sum _{i = 1 } ^n \ \norm{C^*_{\Sigma_1 } (\cdot , \epsilon _i ) - C^*_{\Sigma_2 } (\cdot , \epsilon _i ) }_{H^2(J ) } ^2 .
\end{equation}
\end{theorem}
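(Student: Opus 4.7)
The plan is to transform (\ref{eq:thm1.2}) into an inverse source problem for a uniformly parabolic matrix-valued system in Dupire variables, reduce the matrix structure to a pointwise non-degeneracy condition via irreducibility, and close the estimate with a Carleman argument of Fursikov--Imanuvilov type. By the Dupire-type system derived in Section~2 for $\mathcal{C}(K,T;S^*,t^*)$ as a function of $K$ and $T$, together with the substitution $y=\log K$, $\tau=T-t^*$, each function $u_i(y,\tau):=\mathcal{C}_{\Sigma_i}(e^y,t^*+\tau;S^*,t^*)$, $i=1,2$, satisfies a uniformly parabolic matrix-valued system on $\mathbb{R}\times (0,\tau_*]$ with common initial datum $(S^*-e^y)_+ I_n$ at $\tau=0$. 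Subtraction gives, for $w:=u_1-u_2$,
\begin{equation*}
w_\tau=\mathcal{L}_1 w+\tfrac{1}{2}\bigl(\Sigma_1^2(e^y)-\Sigma_2^2(e^y)\bigr)u_{2,yy}(y,\tau)+(\text{l.o.t.}),\qquad w(y,0)=0,
\end{equation*}
where $\mathcal{L}_1$ denotes the parabolic operator with coefficients built from $\Sigma_1$, $R-Q$ and $B-R$, and the (diagonal) factor $\Sigma_1^2-\Sigma_2^2$ multiplies $u_{2,yy}$ on the left, as is natural for a forward Kolmogorov system. By (\ref{eq:thm1.1}), this factor is compactly supported in $\log I$.

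The observation (\ref{1.6}) provides the $j^*$-th column $w_{\cdot,j^*}(\cdot,\tau_*)$ of $w$ at the final time in the $H^2(\log J)$-norm. Since $\Sigma_1^2-\Sigma_2^2$ is diagonal, the $(i,j^*)$-entry of the source equals $(\Sigma_1^2-\Sigma_2^2)_{ii}(e^y)\cdot(u_{2,yy})_{i,j^*}(y,\tau)$, and up to a non-vanishing discount factor $(u_{2,yy})_{i,j^*}(y,\tau)$ is the joint transition density $\rho_{i,j^*}(e^y,t^*+\tau;S^*,t^*)$ of $(S_T,X_T)$ from $(S^*,e_{j^*})$ in the RSLV model driven by $\Sigma_2$. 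Irreducibility of $(X_t)$ makes every entry of $e^{\tau B}$ strictly positive for $\tau>0$; combining this with Aronson-type Gaussian lower bounds for non-degenerate parabolic fundamental solutions (valid because $\Sigma_2$ is $C_b^{2,\alpha}$ and satisfies (\ref{1.2})) yields a strictly positive lower bound on each $\rho_{i,j^*}(e^y,T;S^*,t^*)$ uniformly for $y\in\overline{\log J}$ and $T$ in a one-sided neighbourhood of $T^*=t^*+\tau_*$, hence a uniform lower bound on $(u_{2,yy})_{i,j^*}$ over a strip $\overline{\log J}\times[\tau_*-\delta,\tau_*]$.

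With this pointwise non-degeneracy in hand, recovering the $n$ time-independent diagonal entries of $\Sigma_1^2-\Sigma_2^2$ from the column $w_{\cdot,j^*}(\cdot,\tau_*)$ becomes a classical final-overdetermination inverse source problem for a linear parabolic system. I would apply a global Fursikov--Imanuvilov parabolic Carleman inequality for $\partial_\tau-\mathcal{L}_1$ on $\log J\times (0,\tau_*)$, with a weight concentrating at $\tau=\tau_*$ and vanishing outside $\log J$. Combining this with the vanishing initial data $w(\cdot,0)=0$ and the non-degeneracy above, and absorbing the inter-column cross-coupling produced by the matrix $B-R$ into the large Carleman parameter, one obtains
\begin{equation*}
\|\Sigma_1^2-\Sigma_2^2\|_{L^2(\log I)}^2 \le C\sum_{i=1}^n\|w_{i,j^*}(\cdot,\tau_*)\|_{H^2(\log J)}^2 .
\end{equation*}
The componentwise bound $|\Sigma_1-\Sigma_2|\le(2\sigma_{\min})^{-1}|\Sigma_1^2-\Sigma_2^2|$ granted by (\ref{1.2}), and the bounded-Jacobian change $K=e^y$ on $\overline{I}\subset\overline{J}$, then deliver (\ref{eq:thm1.2}).

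The main obstacle is the Carleman step: the weight has to be tuned so that the inter-state coupling via the matrix $B-R$ (which mixes all columns of $w$) is absorbed, while the estimate still delivers an $L^2$ bound on the $n$ diagonal entries of $\Sigma_1^2-\Sigma_2^2$ from an $H^2$ bound on only one column $j^*$ of the matrix $w$. Irreducibility plays a double role: it produces the positive density lower bounds of paragraph~2, and it is the mechanism by which observations from a single Markov state $j^*$ encode information about the volatilities of all $n$ states.
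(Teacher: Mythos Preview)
Your overall strategy---pass to Dupire variables, linearise to obtain an inverse source problem with source $G\,\V$, use irreducibility to get strict positivity of $\V$, and close by a Carleman estimate of Fursikov--Imanuvilov / Bukhgeim--Klibanov type---is exactly the route the paper takes. Two remarks on the setup: the $j^*$-th column of $\mathcal{C}$ already satisfies a closed system (the Dupire operator acts on $\mathcal{C}$ from the left), so there is no inter-column coupling to worry about and you can work with a vector-valued $\w$ from the start; also the source is $G\bigl((\w_{A_2})_{yy}-(\w_{A_2})_y\bigr)$, not just $G\,u_{2,yy}$, though this changes nothing structurally.

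The genuine gap is in the Carleman step. Any Carleman estimate that is local in space---whether you introduce a cut-off $\chi\in C_c^\infty(\log J)$ or use a weight ``vanishing outside $\log J$''---produces commutator terms $[\chi,\mathcal{L}_{A_1}]\w_\tau$ supported on an annular set $\omega\subset\log J\setminus\log I$. These are \emph{not} absorbed by taking the Carleman parameter large: they involve $\w_\tau$ and $\w_{\tau y}$ over the full time cylinder $\omega\times(0,T^*)$, not at the single time $\tau_*$, and there is no smallness in them. What the Carleman argument actually delivers is
\[
\|G\|_{L^2(\Omega_1)}^2\;\le\;C\Bigl(\|\w(\cdot,\tau_*)\|_{H^2(\Omega)}^2+\|\w\|_{L^2(\omega\times(0,T^*))}^2\Bigr),
\]
and the second term must still be removed. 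The paper does this by a separate compactness--injectivity argument \`a la Stefanov--Uhlmann: one shows that $G\mapsto \w|_{\omega\times(0,T^*)}$ is compact from $L^2$ to $L^2$ (via parabolic regularity on $\omega$, where $G$ vanishes), that $G\mapsto \w(\cdot,\tau_*)|_\Omega$ is injective (using time-analyticity of $\w$ and the Carleman inequality itself), and then applies an abstract lemma to drop the compact term. This step is not optional, and it also requires some care because the right-hand side $G\V$ has a fundamental-solution singularity at $\tau=0$, so the usual energy estimates do not directly give the needed mapping properties; the paper handles this with Schur-type kernel bounds. Your proposal needs either this compactness argument or an alternative mechanism to control $\|\w\|_{L^2(\omega\times(0,T^*))}$.
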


\noindent Note the different regularity assumptions on $\Sigma _1 $ and $\Sigma _2 . $ The reason is technical: at certain points in the proof we use Gaussian bounds for the fundamental solution of a parabolic PDE which is essentially the adjoint of (\ref{eq:RSLV_BSE4}) with $A = \frac{1 }{2 } \Sigma _2 ^2 $, and which are only known for parabolic PDEs with coefficients which are uniformly $C^{\alpha } . $ We can think of (\ref{eq:thm1.2}) as a first order Taylor estimate for the inverse function $C_{\Sigma } ^* \to \Sigma $ at a  point $C_{\Sigma _2 } ^* $ which is the image of a sufficiently regular $\Sigma _2 . $ It may be that both $\Sigma _i $ having coefficients in $C^1 _b (\mathbb{R } ) $ would suffice, but $C^1 $ is  the minimum regularity required, at least for our proof which is based on a Carleman estimate.   
\medskip

We also note that some condition on the Markov chain is necessary, as shown by the trivial example of $B = 0 $, for which the system decouples. In that case, $C^* _{\Sigma } (K, T ) $ will at best allow reconstruction of $\sigma _{j^* } (S ) $, but not of the other elements of $\Sigma (S ) $, as can be seen from the generalised Dupire equation (\ref{eq:RS_Dupire3}) below.
\medskip   
   
If we do not want to assume that $\Sigma _1 $ and $\Sigma _2 $ coincide outside of an interval $I $, we have the following version of (\ref{eq:thm1.2}).

\begin{theorem} \label{thm:1.2}   
Suppose that $\Sigma _1 $, $\Sigma _2 $ and $(X_t )_{t \geq 0 } $ satisfy the conditions of theorem \ref{T1.1} except for (\ref{eq:thm1.1}), which is replaced by
\begin{equation}
\Sigma _1 - \Sigma _2 \in L^2 \left( \mathbb{R }_{> 0 } , \frac{dS }{S } \right) ^n .   
\end{equation}
Let $I \Subset J \Subset (0, \infty ) $ be bounded intervals with $J $ open, such that $S^* \notin I . $ Then there exists a constant $C  > 0 $, with the same dependence as in theorem \ref{T1.1}, such that
\begin{eqnarray} \nonumber
\norm{\Sigma_1 - \Sigma_2}_{L^2(I) } ^2 &\leq & C \left( \sum _{i = 1 } ^n \ \norm{C^*_{\Sigma_1 } (\cdot , \epsilon _i ) - C^*_{\Sigma_2 } (\cdot , \epsilon _i ) }_{H^2(J ) } ^2 + || \Sigma _1 - \Sigma _2 ||^2 _{H^1 (J \setminus I ) } \right. \\
&& \left. + || \Sigma _1 - \Sigma _2 ||^2 _{L^2 (\mathbb{R }_{> 0 }   
\setminus I , dS / S ) } \right) . \label{eq:thm1.2a}
\end{eqnarray}
\end{theorem}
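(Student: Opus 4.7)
The plan is to reduce Theorem~\ref{thm:1.2} to Theorem~\ref{T1.1} by smoothly truncating $\Sigma_1-\Sigma_2$ outside a slightly larger interval than $I$, and then controlling the error this makes on the observed call prices by a direct (forward) stability estimate. Pick bounded open intervals $I\Subset I'\Subset J$ with $S^*\notin \overline{I'}$ (possible since $S^*\notin \overline I$ after infinitesimally shrinking $I$ if necessary), together with a cutoff $\chi\in C_c^\infty(\mathbb{R}_{>0})$ satisfying $\chi\equiv 1$ on $I$, $\operatorname{supp}\chi\subset I'$ and $0\le\chi\le 1$. Set $\tilde\Sigma:=\chi\Sigma_1+(1-\chi)\Sigma_2$. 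As a convex combination of two diagonal matrices satisfying the hypotheses of Theorem~\ref{T1.1}, the matrix $\tilde\Sigma$ is diagonal, lies in $C^1_b$, satisfies (\ref{1.2}) with the same $\sigma_{\min},\sigma_{\max}$, coincides with $\Sigma_2$ on $\mathbb{R}\setminus I'$, and with $\Sigma_1$ on $I$. Applying Theorem~\ref{T1.1} to the pair $(\tilde\Sigma,\Sigma_2)$, with $I$ replaced by $I'$ and the same $J$, yields
\begin{equation} \label{plan:step1}
\|\Sigma_1-\Sigma_2\|_{L^2(I)}^2\le\|\tilde\Sigma-\Sigma_2\|_{L^2(I')}^2\le C\sum_{i=1}^n\|C^*_{\tilde\Sigma}(\cdot,\epsilon_i)-C^*_{\Sigma_2}(\cdot,\epsilon_i)\|_{H^2(J)}^2,
\end{equation}
with $C$ depending only on the quantities listed in Theorem~\ref{T1.1}. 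By the triangle inequality the remaining task is to bound $\sum_i\|C^*_{\Sigma_1}(\cdot,\epsilon_i)-C^*_{\tilde\Sigma}(\cdot,\epsilon_i)\|_{H^2(J)}^2$ by the two correction terms appearing in (\ref{eq:thm1.2a}).

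The key intermediate result is a forward stability estimate of the shape
\begin{equation} \label{plan:step2}
\sum_{i=1}^n\|C^*_{\Sigma_1}(\cdot,\epsilon_i)-C^*_{\tilde\Sigma}(\cdot,\epsilon_i)\|_{H^2(J)}^2\le C\bigl(\|\delta\Sigma\|_{H^1(J)}^2+\|\delta\Sigma\|_{L^2(\mathbb{R}_{>0},dS/S)}^2\bigr),
\end{equation}
where $\delta\Sigma:=\Sigma_1-\tilde\Sigma=(1-\chi)(\Sigma_1-\Sigma_2)$ is supported in $\mathbb{R}_{>0}\setminus I$. To prove (\ref{plan:step2}) I would pass to log-strike $k=\log K$ and time $\tau=T-t^*$, and use the generalised Dupire system derived in Section~2: under it the matrix $\mathcal{C}_{\Sigma_1}-\mathcal{C}_{\tilde\Sigma}$ solves a forward parabolic system in $(k,\tau)$ with zero initial datum at $\tau=0$ and inhomogeneity proportional to $(\Sigma_1^2-\tilde\Sigma^2)(K)\,K^2\partial_K^2\mathcal{C}_{\Sigma_1}$, itself supported in $\mathbb{R}_{>0}\setminus I$. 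Parabolic smoothing for $\tau>0$, combined with the classical Gaussian upper bounds on the fundamental solution of the Dupire system associated with $\tilde\Sigma$ (whose required H\"older regularity is inherited from $\Sigma_2$ in the far field where $\tilde\Sigma=\Sigma_2$), makes $K^2\partial_K^2\mathcal{C}_{\tilde\Sigma}$ bounded with bounded $k$-derivatives; Duhamel's formula then expresses $\mathcal{C}_{\Sigma_1}(\cdot,\tau^*)-\mathcal{C}_{\tilde\Sigma}(\cdot,\tau^*)$ as the time integral of this inhomogeneity convolved against the Dupire kernel.

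Once (\ref{plan:step2}) is established, the proof finishes by observing via Leibniz' rule and $0\le\chi\le 1$ that its right-hand side is dominated by $\|\Sigma_1-\Sigma_2\|_{H^1(J\setminus I)}^2+\|\Sigma_1-\Sigma_2\|_{L^2(\mathbb{R}_{>0}\setminus I,dS/S)}^2$, since $\delta\Sigma$ vanishes on $I$ and $\chi$ is a fixed cutoff; combining with (\ref{plan:step1}) then yields (\ref{eq:thm1.2a}). The principal obstacle is (\ref{plan:step2}): the convolution must be split into a near-field part $k'\in\log J$, on which $H^1$-regularity of $\delta\Sigma$ is needed to recover two $k$-derivatives on the solution, and a far-field tail $k'\in\mathbb{R}\setminus\log J$, on which the Gaussian decay in $|k-k'|$ of the Dupire kernel at the fixed positive time $\tau^*$ absorbs the lack of extra regularity and permits control by the $L^2(dS/S)$-norm (which in log-strike is just $L^2(dk')$) alone. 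The technical separation $S^*\notin\overline{I'}$ is what keeps the initial corner of $\mathcal{C}_{\tilde\Sigma}$ at $K=S^*$ clear of the support of the inhomogeneity, so that the above kernel bounds may be applied uniformly in $\tau\in[0,\tau^*]$.
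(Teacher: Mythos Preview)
Your reduction strategy is genuinely different from the paper's. The paper does \emph{not} reduce Theorem~\ref{thm:1.2} to Theorem~\ref{T1.1}: instead it reruns the entire Carleman--plus--compactness machinery for non-compactly supported $G=A_1-A_2$, first proving a version of Lemma~\ref{L5.2} with an extra $\|G\|_{H^1(\omega)}$ term (Corollary~\ref{cor:L5.2}), and then absorbing the $\|\w\|_{L^2(\omega\times(0,T^*))}$ term by a contradiction/compactness argument (Theorem~\ref{thm:7.1}) that extends $\mathcal{K}$ to all of $L^2(\mathbb{R})^n$. The paper does contain a cut-off argument close in spirit to yours, but it is carried out at the \emph{linearised} level (splitting $G=\chi G+(1-\chi)G$ in the single equation $(\partial_\tau-\mathcal{L}_{A_1})\w=G\V_{A_2}$), and it only yields the weaker Corollary~6.1 with a global $\|G\|_{H^1(\mathbb{R}\setminus\Omega_1)}$ on the right, not the local $H^1(J\setminus I)$ plus global $L^2$ you need for~(\ref{eq:thm1.2a}).

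Your nonlinear cut-off has a real regularity gap. By construction $\tilde\Sigma=\chi\Sigma_1+(1-\chi)\Sigma_2$ is only $C^1_b$, so neither $A_1$ nor $\tilde A$ is $C^{2,\alpha}$. The forward estimate~(\ref{plan:step2}) for $\|C^*_{\Sigma_1}-C^*_{\tilde\Sigma}\|_{H^2(J)}$ linearises as $(\partial_\tau-\mathcal{L}_{\tilde A})\w=\delta G\cdot \V_{A_1}$ (or the symmetric form with $\V_{\tilde A}$), and getting an $H^2$ bound by the method of Appendix~C requires the Gaussian estimate $|\partial_y\V|\leq C\tau^{-1}e^{-cy^2/\tau}$: see step~3 there, where the term $G\V_y$ appears. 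As the paper explicitly flags in Remark~4.2, those derivative bounds are only known when the coefficient in the auxiliary PDE~(\ref{eq:aux_PDE}) is $C^{2,\alpha}$; for $A_1$ or $\tilde A$ merely $C^1$ they are open. By contrast the paper's linear split keeps $\V=\V_{A_2}$, with $A_2\in C^{2,\alpha}_b$, throughout, which is why no such issue arises. (Your sentence about $\tilde\Sigma$ ``inheriting H\"older regularity from $\Sigma_2$ in the far field'' does not help: fundamental-solution bounds are global in the coefficients.)

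There is also a geometric slip: $\delta\Sigma=(1-\chi)(\Sigma_1-\Sigma_2)$ vanishes only on $\{\chi=1\}\supset I$, and since the theorem assumes $S^*\notin I$, the point $K=S^*$ typically \emph{does} lie in $\operatorname{supp}\delta\Sigma$. The condition $S^*\notin\overline{I'}$ guarantees $\tilde\Sigma=\Sigma_2$ near $S^*$, but it does not separate the corner of $\mathcal{C}$ from the inhomogeneity, so your uniform-in-$\tau$ kernel argument in the last paragraph is not justified as written. If you want to salvage the approach, the cleanest fix is to abandon the nonlinear $\tilde\Sigma$ and split $G=A_1-A_2$ linearly (so that $\V_{A_2}$ is used everywhere); you then recover Corollary~6.1 immediately, but upgrading it to~(\ref{eq:thm1.2a}) still requires either a refined far-field Schur estimate on $\|\w_2(\cdot,\tau_*)\|_{H^2(\Omega)}$ in terms of $\|G_2\|_{L^2(\mathbb{R})}$ alone, or the paper's compactness route.
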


Theorem \ref{thm:1.2} implies for example that (\ref{eq:thm1.2}) will be true modulo an error of the order of $\varepsilon $, if both $|| \Sigma _1 - \Sigma _2 ||_{H^1 (J \setminus I ) } $ $ || \Sigma _1 - \Sigma _2 ||_{L^2 (\mathbb{R }_{> 0 } \setminus I , dS / S ) } $ are smaller than $\varepsilon . $ It also shows that if, for a given $\Sigma (e^y ) \in C_b ^{2 , \alpha } (\mathbb{R } ) $, we can find a sequence $\Sigma _{\nu } (e^y ) \in C_b ^1 (\mathbb{R } ) $ such that $\norm{C^*_{\Sigma } (\cdot , \epsilon _i ) - C^*_{\Sigma_{\nu } } (\cdot , \epsilon _i ) }_{H^2(J ) } $, $|| \Sigma - \Sigma _{\nu } ||_{H^1 (J \setminus I ) } $ and $|| \Sigma - \Sigma _{\nu } ||_{L^2 (\mathbb{R }_{> 0 } \setminus I , dS / S ) } $ all tend to 0, while $|| \Sigma _{\nu } ||_{C^1 (J ) } $ remains bounded, then $\Sigma _{\nu } \to \Sigma $ in $L^2 (I ) $ as $\nu \to \infty . $   
\medskip   
   
We end this introduction with some general remarks to situate our work within the existing literature. We already noted that this paper contributes to the growing literature on Markov-modulated diffusion processes and their applications in Finance.   
The problem of reconstructing a local volatility function from observed option prices has attracted a lot of attention in the mathematical finance literature, ever since the publication of Dupire's paper \cite{Dup}. Without claiming to be exhaustive, we cite \cite{Bou-Isa1} and \cite{Bou-Isa2} for stability and uniqueness results in H\"older norm, \cite{Crepey} and \cite{WaYaZe} for Tykhonov-style approaches using different types of regularisation, \cite{L01} and \cite{Den08} for an optimal control approach, \cite{Isa02} and \cite{Isa14} for an approach based on linearisation around a constant volatility, and \cite{Cristo_Roques} where the authors establish a uniqueness result using a formulation of the problem in terms of stochastic differential equations: they prove a one-to-one relationship between the diffusion coefficient of a diffusion process $X_t $ and the expectation of strictly concave or convex function of $X_t $ observed during a small time interval.   

 All of these papers only consider the scalar case (corresponding to $n = 1 $), without regime switching. Our paper is closest to, and inspired by, Bouchouev and Isakov \cite{Bou-Isa1} and \cite{Bou-Isa2}, but we establish estimates in Sobolev instead of H\"older space norms. Our proof of theorems \ref{T1.1} and \ref{thm:1.2} uses the  Bukhgeim-Klibanov method, as in Imanuvilov and Yamamoto's \cite{[IY2]}, though we will only need a Carleman estimate which is local in space. The system of PDEs we have to work with is only coupled in the 0-th order terms, and diagonal in the first and second order. This makes that much of the analysis for the scalar Dupire PDE carries through automatically, but we do need at one point  strict positivity of the matrix coefficients of the fundamental solution, which is true iff the underlying Markov chain of the model is irreducible. This result, which we prove in Appendix B, seems of some interest in its own right. Another technical problem occurs because of the non-smoothness of the call option pay-off at the strike $K $ which, after linearisation of the inverse problem, leads to non-homogeneous parabolic problem whose right hand side has a fundamental-solution type singularity at time 0: see section 4. This singularity prevents us from applying standard parabolic energy estimates where we would have liked to, and we have had to replace these by certain weighted energy estimates derived in section 3, as well as by Schur -type estimates for the norms of certain integral operators in Appendix C.\\
   
\section{The Dupire equation}   
   
The inverse problem of determining the local volatilities $\sigma _i (S) $ from observed option prices can be reformulated as a coefficient identification problem for a parabolic system of PDEs in $K $ and $T . $ This is a consequence of the following result, which generalizes the well-known Dupire equation \cite{Dup} to the regime-switching case.

\begin{theorem} \label{thm:RS_Dupire} Suppose that the function $A(x, \tau ) := \Sigma (e^x  , T - \tau ) $ is uniformly elliptic, continuous and bounded and $C^1 $ in $x $, with uniformly bounded derivative. Then, as function of $K $ and $T $, the matrix of call option prices $\mathcal{C } $ satisfies the forward parabolic system
\begin{equation} \label{eq:RS_Dupire1}
\mathcal{C }_T = \frac{1 }{2 } K^2 \Sigma (K, T )^2 \mathcal{C }_{KK } - K (R - Q ) \mathcal{C }_K + (B - Q ) \mathcal{C } , \ \ T > t ,
\end{equation}
with boundary condition
\begin{equation} \label{eq:RS_Dupire2}
\mathcal{C } (K, t ; S, t ) = \max (S - K , 0 ) \, I_n ,   
\end{equation}   
at $T = t . $   
\end{theorem}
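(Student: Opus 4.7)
The plan is to mimic Dupire's classical derivation by representing each call option price as an integral against the joint discounted transition sub-density of $(S_t, X_t)$, then to transfer the $T$-derivative onto this density, which satisfies the forward Kolmogorov equation adjoint to the Regime-Switching Black--Scholes generator appearing in (\ref{eq:RSLV_BSE1}). The terminal-strike condition (\ref{eq:RS_Dupire2}) will be a direct consequence of $(S_T, X_T) \to (S, e_j)$ almost surely as $T \to t^+$. Concretely, first I would introduce the discounted transition sub-density $p_{ij}(S', T; S, t)$ of $(S_t, X_t)$, characterised for any bounded test pay-offs $F_i$ by
\[
\mathbb{E}\!\left[e^{-\int_t^T r(X_u) du} F_{X_T}(S_T) \,\big|\, S_t = S,\, X_t = e_j\right] \;=\; \sum_i \int_0^\infty F_i(S')\, p_{ij}(S',T;S,t)\, dS'.
\]
By the hypotheses on $A(x,\tau) = \tfrac{1}{2}\Sigma(e^x, T-\tau)^2$ together with (\ref{1.2}), the classical theory of uniformly parabolic systems with H\"older coefficients (\cite{Eidelman}, \cite{F}) supplies $p_{ij}$ on $\{T > t\}$ with Gaussian upper bounds for itself and for its first two $S'$-derivatives. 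Computing the adjoint in the $(S',i)$-variables of the infinitesimal generator of $(S_t, X_t)$ minus $r(X_t)$ yields the forward equation
\[
\partial_T p_{ij} = \tfrac{1}{2}\partial_{S'}^{\,2}\!\bigl(\sigma_i(S',T)^2 (S')^2 p_{ij}\bigr) - \partial_{S'}\!\bigl((r_i - q_i)S' p_{ij}\bigr) + \sum_k b_{ik} p_{kj} - r_i p_{ij}.
\]

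Next I would specialise to the pay-off associated with $\epsilon_\ell$, so that
\[
C_{ij}(K,T;S,t) = \int_K^\infty (S' - K)\, p_{ij}(S',T;S,t)\, dS',
\]
from which $\partial_K C_{ij} = -\int_K^\infty p_{ij}\, dS'$, $\partial_K^{\,2} C_{ij} = p_{ij}(K,T;S,t)$, and $\int_K^\infty S' p_{ij}\, dS' = C_{ij} - K\partial_K C_{ij}$. Differentiating in $T$, inserting the forward equation, and integrating by parts twice in $S'$ over $(K,\infty)$, the Gaussian tails kill the boundary terms at $+\infty$, while the lower boundary at $S' = K$ contributes $\tfrac{1}{2}\sigma_i(K,T)^2 K^2\, p_{ij}(K,T;\cdot) = \tfrac{1}{2}\sigma_i^2 K^2\, \partial_K^{\,2} C_{ij}$ from the second-order term, and the transport term produces $-(r_i-q_i)K\partial_K C_{ij} + (r_i-q_i)C_{ij}$. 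Adding the zeroth-order contributions $\sum_k b_{ik} C_{kj} - r_i C_{ij}$ and cancelling $r_i C_{ij}$ gives, entrywise,
\[
\partial_T C_{ij} = \tfrac{1}{2}\sigma_i(K,T)^2 K^2\, \partial_K^{\,2} C_{ij} - (r_i - q_i) K\, \partial_K C_{ij} + \sum_k b_{ik} C_{kj} - q_i C_{ij},
\]
which is precisely the $(i,j)$-entry of (\ref{eq:RS_Dupire1}), since the diagonal matrices $\Sigma$, $R$, $Q$ now act by left-multiplication on $\mathcal{C}$. The boundary condition (\ref{eq:RS_Dupire2}) at $T = t$ follows from $C_{ij}(K, t; S, t) = \mathbb{E}[\max(S - K, 0)\mathbf{1}_{\{X_t = e_i\}}\mid X_t = e_j] = \delta_{ij}\max(S - K, 0)$.

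The main obstacle is not the manipulation itself, which is formal, but justifying the differentiation under the integral in $T$ and the two integrations by parts in $S'$; both require Gaussian control of $p_{ij}$ and of $\partial_{S'}^{\,k} p_{ij}$ for $k = 1,2$. The uniform ellipticity (\ref{1.2}) together with the $C^1_b$-assumption on $A$ provides the $C^\alpha$-regularity of the coefficients needed to invoke Eidelman's Gaussian estimates for strongly parabolic systems, and the coupling via the bounded matrix $B$ is a zeroth-order perturbation handled by a Duhamel iteration without disturbing the Gaussian structure. Once the matrix fundamental solution $(p_{ij})$ is in place with these bounds, the remainder is the standard Dupire computation transcribed to the matrix setting.
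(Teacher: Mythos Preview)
Your approach is essentially the same as the paper's: both represent $C_{ij}$ as an integral of $(S'-K)_+$ against the matrix fundamental solution, invoke the forward Kolmogorov equation for that kernel, and then integrate by parts in $S'$ to recover (\ref{eq:RS_Dupire1}). The paper derives the forward equation by a duality argument (pairing with solutions of the backward system), while you simply state it; this is a cosmetic difference.

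There is one regularity point worth flagging. You claim Gaussian bounds for $p_{ij}$ and its first two $S'$-derivatives under the hypothesis that $A\in C^1_b$. The paper is more cautious here: with coefficients that are only $C^1$ in $x$ (hence $C^\alpha$ but not $C^{2,\alpha}$), Eidel'man/Friedman give Gaussian bounds on the kernel and its first derivative, but not necessarily on the second; cf.\ the paper's Remark following Lemma~\ref{L4.1}. Accordingly, the paper does not write the forward equation in the strong form $\partial_{S'}^2(\sigma_i^2 (S')^2 p_{ij})$ that you insert pointwise, but only in a weak form tested against $H^1_{\rm loc}$ functions of polynomial growth, and then takes the test function $\underline{F}(y)=\bigl(\max(y-K,0)\,\delta_{ik}\bigr)_k$. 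Your computation survives this adjustment unchanged---the two integrations by parts you describe are exactly the pairing with this $H^1_{\rm loc}$ test function and its derivative---but as written your justification slightly over-claims what the $C^1$ hypothesis delivers.
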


We refer to Appendix A for a proof. We note that in \cite{Xi_Ro_Ma} the authors consider the inverse problem in the case when the volatility matrix is a function $\Sigma = \Sigma (t) $ of time only and which, in a certain sense, is complementary to the case considered in this paper. They established a Dupire equation for classical call options with state-independent pay-offs that is different from ours,  and which can be derived by observing that if the volatilities $\sigma _i $ do not depend on $S $, then the call option prices $C_j = C (K , T , \mathbf{1 } ; S, t , e_i ) $   
are homogeneous of degree 1 in $(S, K ) . $ The Euler relations for homogeneous functions then enables one to re-write the Black and Scholes system (\ref{eq:RSLV_BSE6a}) as a system of PDEs in $K $ and $T . $ Multiplying on the left by the row-vector $\mathbf{1 } $ we then find for this case a Dupire-type system satisfied by {\it classical call} options $C_j = C (K , T , \mathbf{1 } ; S, t , e_i ) . $ For general dependend $\Sigma (S , t ) $ this does not seem to be possible: in equation (\ref{eq:RS_Dupire1}), the matrix $\mathcal{C } $ and its derivatives multiply the coefficients on the left instead of on the right, so left-contraction with the row-vector $\mathbf{1 } $ is no longer possible.    
\medskip   
   
If we now fix $S^* > 0 $, $t^* > 0 $ and $j^* \in \{ 1 , \ldots , n \} $, then the   
column vector   
$$   
\U (K, T ) := \left( C(K , T , \epsilon _i  ; S^* , t^* , e_{j^* } \right)_{1 \leq i \leq n } ,   
$$   
(which is simply the $j^* $-th column of $\mathcal{C } $) satisfies the parabolic system
\begin{equation} \label{eq:RS_Dupire3}
\partial _T \U = \frac{1 }{2 } K^2 \Sigma (K, T )^2 \U _{KK } - K (R - Q ) \U _K + (B - Q ) \U , \ \ T > t^* ,
\end{equation}
with boundary condition $\U (K, t^* ) = \max (S^* - K ) e_{j^* } $, where from now we will identify $e_j $ with the $j $-th canonical basis vector of $\mathbb{R }^n . $ The logarithmic substitution
$$
y = \ln (K/S^* ) ,   
\ \tau=T -t^* , \ \w_\A(y,\tau) =\mathbf{u}(K,T) ,   
$$
transforms the initial value problem for $\U (K, T ) $ into
\begin{equation}\label{2.6}
\left\{\begin{array}{ll}
\para{\p_\tau-\mathcal{L}_\A}\w_\A=0, & \textrm{in}\ \R \times (0,\infty), \cr
\w_\A(y,0) = S^*\max(1-e^y, 0 ) e_{j^* }  , & y\in\R, \cr   
\end{array}
\right.
\end{equation}
where
\begin{equation}
\label{defLa}
\LL_\A\w = \A(y)\w_{yy}-\para{\A(y)+R-Q}\w_y + (B - Q ) \w,
\end{equation}
with $\, \A(y) := \frac{1}{2} \Sigma^2 \left( e^y \right) $, and where we made the dependence of the solution on the coefficient $A $ explicit. This initial value problem is uniquely solvable if we require that the solution $\w _{\A } $ be bounded (note that the solution is unique if it satisfies the bound (\ref{eq:RSLV_BSE5}), but since the initial value is bounded, $\w _{\A } $ will then automatically be bounded, by the maximum principle for weakly coupled parabolic systems \cite{PW}).   
\medskip   
   
Our inverse problem can therefore be rephrased as follows: letting
$$
\Omega _1:= \set{ \ln(K/S^*),\ K \in I } \ \Subset \Omega := \set{ \ln(K/S^*),\ K \in J } ,
$$
can we stably retrieve $\A(y) $ on $\Omega _1 $ from knowing $\w_\A(y,\tau _* ) $ on $\Omega $, where $\tau _*  := T - t^* $ and $\w_\A $ is the unique bounded solution of (\ref{2.6})?

\section{Parabolic estimates for the Dupire equation}
\label{sec:Carleman}

Our proof of theorem \ref{T1.1} uses a Carleman estimate for parabolic equations which is local in the space-variable $y . $ To state this estimate, let $\psi \in C^2 (\overline{\Omega } ) $ with $\psi '(y) \neq 0 $ on $\overline{\Omega } . $ We put  
$T^* := 2 \tau _*  $ and $\ell (\tau ) := \tau (T^* - \tau ) . $ Next, for $L > ||\psi ||_{\infty } $, define   
$$
\eta(y, \tau ) := \eta_\lambda(y, \tau ) :=\frac{e^{\lambda\psi(y)}-e^{\lambda L }}{\ell(\tau)},\ y \in \Omega,\ \tau\in (0, T^* ) ,
$$
where $\lambda $ will be chosen sufficiently large below. Note that $\eta (y, \tau ) < 0 $ on $Q := \Omega \times (0, T^* ) $, and that $\eta (y, \tau ) \to -\infty $ as $\tau \to 0 , T^* . $   

Let
$$
H^{2,1}(Q)=\set{z\in L^2(Q),\,z_y,\,z_{yy}, z_{\tau}\in L^2(Q ) },   
$$
be the usual Sobolev parabolic space, endowed with the norm
$$
\norm{z}_{H^{2,1}(Q)} ^2 =\norm{z}_{L^2(Q)} ^2 +\norm{z_y}_{L^2(Q)} ^2 +\norm{z_{yy}}_{L^2(Q)} ^2 +\norm{z_\tau}_{L^2(Q)} ^2 ,
$$
which we extend in the natural way to vector-valued functions via (\ref{eq:convention_vector-norm}). We next state the parabolic Carleman estimate which we will use and which, since the principal symbol of $\mathcal{L }_A $ is diagonal, it is an immediate consequence of a scalar Carleman estimate established in \cite{[FI],[IY2]}; see also \cite{LR_L}, theorem 7.3 for an approach using semi-classical analysis.   
   
\begin{theorem}\label{L3.1} Suppose that $A = ( a_i \delta _{ij } )_{i, j } $ is diagonal with $a_i \in C^1 (\Omega ) $, for all $i $,and let $K \subset \Omega $ be compact. Then there exists a $\lambda_0 > 0$ such that, for all $\lambda > \lambda _0 $, we may find constants $s_0> 0$ and $C>0$ such that, for all $\z=(z_1,\ldots,z_n ) \in H^{2, 1 } (Q ) ^n $ with ${\rm supp }(z_i) \subset K \times [0, T^* ] $ and all $s \geq s_0 $, we have   
\begin{equation}\label{eq:Carleman_est}   
\| \left( \frac{s }{\ell } \right)^{3/2 } e^{s\eta} \, \z \, \|_{L^2 (Q) } ^2 + \| \left( \frac{s }{\ell } \right)^{1/2 } e^{s\eta } \, \z_y \, \|_{L^2 (Q) } ^2 +
\| \left( \frac{s }{\ell } \right) ^{- 1 / 2 } e^{s\eta } \, \z_\tau  \, \|_{L^2 (Q) } ^2 \\   
\leq C \| e^{s\eta} ( \p_\tau-\LL_\A ) \, \z \, \|_{L^2 (Q) } ,   
\end{equation}   
where the constant $C$ depends on the $C^1 $-norm of $A $ and on $\lambda $ but is independent of $s . $   

\end{theorem}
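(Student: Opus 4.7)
The approach is to reduce the estimate to the scalar parabolic Carleman estimate of \cite{[FI],[IY2]} applied to each component $z_i$ separately. Under the hypothesis that $\A$ is diagonal, and since $R,Q$ are diagonal by construction, only the zeroth-order term $B-Q$ couples the components of $\z$; the principal part of $\LL_\A$ (orders one and two) is completely diagonal. Accordingly, introducing the scalar operator
$$
P_i := \p_\tau - a_i(y)\p_y^2 + \bigl(a_i(y) + r_i - q_i\bigr)\p_y, \qquad i=1,\ldots,n,
$$
one has the pointwise identity
$$
P_i z_i = \bigl[(\p_\tau - \LL_\A)\z\bigr]_i + \sum_{j=1}^n (B-Q)_{ij}\, z_j.
$$

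First I would invoke the scalar Carleman estimate of \cite{[FI],[IY2]} for the uniformly parabolic operator $P_i$ with $C^1$ principal coefficient $a_i$. Applied to $z_i$ with $\mathrm{supp}(z_i)\subset K\times[0,T^*]$, with the same weight $\eta=\eta_\lambda$ and function $\ell$ as in the statement, it supplies $\lambda_0>0$ and then, for each $\lambda\geq\lambda_0$, constants $s_0,C_0>0$ such that for all $s\geq s_0$,
$$
\bigl\|(s/\ell)^{3/2}e^{s\eta}z_i\bigr\|_{L^2(Q)}^2 + \bigl\|(s/\ell)^{1/2}e^{s\eta}(z_i)_y\bigr\|_{L^2(Q)}^2 + \bigl\|(s/\ell)^{-1/2}e^{s\eta}(z_i)_\tau\bigr\|_{L^2(Q)}^2 \leq C_0\,\bigl\|e^{s\eta}P_i z_i\bigr\|_{L^2(Q)}^2.
$$

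Next I would sum these $n$ estimates, use the vector-norm convention (\ref{eq:convention_vector-norm}) to collect the left-hand side, and bound the right-hand side via the identity above and the triangle inequality by
$$
\sum_{i=1}^n \bigl\|e^{s\eta}P_i z_i\bigr\|_{L^2(Q)}^2 \leq 2\,\bigl\|e^{s\eta}(\p_\tau-\LL_\A)\z\bigr\|_{L^2(Q)}^2 + 2C_1\,\bigl\|e^{s\eta}\z\bigr\|_{L^2(Q)}^2,
$$
where $C_1$ depends only on the entries of $B-Q$.

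The only step requiring any care, and which I expect to be the main (though entirely routine) obstacle, is absorbing the parasitic coupling term $\|e^{s\eta}\z\|_{L^2(Q)}^2$ into the left-hand side. Since $\ell(\tau)\leq \ell_{\max}:=(T^*/2)^2$ on $[0,T^*]$, one has
$$
\bigl\|e^{s\eta}\z\bigr\|_{L^2(Q)}^2 \leq (\ell_{\max}/s)^3\,\bigl\|(s/\ell)^{3/2}e^{s\eta}\z\bigr\|_{L^2(Q)}^2,
$$
so enlarging $s_0$ so that $2C_0 C_1 (\ell_{\max}/s_0)^3 \leq 1/2$ allows the parasitic term to be absorbed into the $(s/\ell)^{3/2}$-weighted term on the left, yielding (\ref{eq:Carleman_est}). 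Beyond locating the scalar Carleman estimate in \cite{[FI],[IY2]} in a form matching the weights $\eta$ and $\ell$ used here, no additional work is required: the diagonality of the principal part is exactly what allows the reduction to go through.
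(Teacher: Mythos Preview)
Your proposal is correct and is precisely the argument the paper has in mind: the paper does not give a detailed proof but simply states that, since the principal symbol of $\LL_\A$ is diagonal, the estimate is an immediate consequence of the scalar Carleman estimate of \cite{[FI],[IY2]}. Your reduction to the scalar estimate for each $P_i z_i$ and absorption of the zeroth-order coupling $(B-Q)\z$ by taking $s$ large is exactly how this ``immediate consequence'' is justified.
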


 We will also need a technical result in the form of a  local parabolic regularity estimate with Carleman weights, which we state in slightly greater generality than needed. Consider a parabolic $n \times n $ system
\begin{equation} \label{eq:gen_para_system}   
\p_\tau\U - \p_y(C_2 \p_y\U) - C_1 \p_y\U - C_0 \U = \f ,   
\end{equation}   
on $Q = \Omega \times (0, T^* ) $ with real-matrix valued coefficients $C_i = C_i (y) $ and real symmetric positive-definite $C_2 = C_2 (y) . $ Solutions will be interpreted in the either the strong or the weak sense, depending on context, and we will limit ourselves to solutions with real-valued components.   
\medskip   

\begin{lemma} \label{L3.2} Suppose that the coefficients $C_0 $ and $C_1 $ of (\ref{eq:gen_para_system}) are locally bounded on $\Omega $ and that $C_2 $ is of class $C^1 $ there, while $\f \in L^2 _{\rm loc } (Q ) ^n . $  Let $\gamma \in [0,+\infty)$  and let $\Delta \Subset D  \Subset \Omega . $ Then there exists a constant $C=C(\gamma,\Delta,D)>0$ such that for all  $\U \in H^{2,1}(Q ) ^n $ solution of (\ref{eq:gen_para_system}) and all $s\geq s_0 > 0 $,
\begin{equation} \label{3.2a}
\| e^{ s \eta} \ell^{-\gamma } \U_y \|^2_{L^2(\Delta \times (0,T^* ))} \leq C \left(  s^2 \| e^{s \eta} \ell^{- \gamma -1 } \U \|^2_{L^2(D \times (0,T^* ) ) } + s^{-2 } \| e^{s \eta} \ell^{-\gamma + 1 } \f \|^2_{L^2(D \times (0,T^* ))} \right),
\end{equation}
If, moreover, $\f _y \in L^2 _{\rm loc } (Q ) ^n $ and $C_0 $ is $C^1 $ also, then   
\begin{eqnarray} \label{3.2b}
\| e^{s \eta} \ell^{-\gamma} \U_{\tau} \|^2_{L^2(\Delta \times (0,T^* )) }
& \leq & C \left(  s^4  \| e^{s \eta}\ell^{- \gamma - 2  } \U \|^2_{L^2(D \times (0,T^* ))} + \| e^{s \eta} \ell^{- \gamma } \f \|^2_{L^2(D \times (0,T^* ))} \right. \nonumber \\
& & \hspace*{1cm} \left. + s^{-2 } \| e^{s \eta} \ell^{-\gamma  + 1  } \f_y \|^2_{L^2(D \times (0,T^* ))} \right).   
\end{eqnarray}
\end{lemma}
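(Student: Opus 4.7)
The plan is to prove (3.2a) by a cutoff-based weighted energy (Caccioppoli-type) estimate applied directly to the system (\ref{eq:gen_para_system}), and then derive (3.2b) by differentiating the equation in $y$ and iterating (3.2a) twice, closing the argument via the PDE itself.

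For (3.2a), I fix a smooth cutoff $\chi \in C_c^\infty(\Omega)$ with $\chi \equiv 1$ on a neighbourhood of $\overline{\Delta}$ and $\mathrm{supp}\,\chi \subset D$, and use $\varphi := \chi^2 e^{2s\eta}\ell^{-2\gamma}$ as multiplier weight. Since $e^{2s\eta}$ vanishes to infinite order at $\tau = 0, T^*$, $\varphi$ and all its $\tau$-derivatives vanish at the temporal boundary, so integrations by parts in $\tau$ produce no boundary terms. I take the scalar product of (\ref{eq:gen_para_system}) with $\varphi\U$ and integrate over $Q$: the parabolic term yields $-\tfrac12\int \p_\tau\varphi\,|\U|^2$ after IBP in $\tau$; the principal spatial term yields, after IBP in $y$ and by uniform positivity of $C_2$ on $\overline{D}$, a coercive contribution bounded below by $c_2\int\varphi|\U_y|^2$ plus a cross term $\int\p_y\varphi\,\U\cdot(C_2\U_y)$. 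Direct computation using the explicit form of $\eta$ gives $|\p_\tau\varphi|\leq Cs\chi^2 e^{2s\eta}\ell^{-2\gamma-2}$ and $|\p_y\varphi|^2/\varphi \leq Cs^2\chi^2 e^{2s\eta}\ell^{-2\gamma-2} + C(\chi')^2 e^{2s\eta}\ell^{-2\gamma}$, the latter using $|\p_y\eta|\leq C/\ell$. Young's inequality with a small parameter absorbs the resulting $|\U_y|^2$-contributions into the left-hand side, and the source term is handled via $\int \varphi\,\U\cdot\f \leq \epsilon s^2 \int \chi^2 e^{2s\eta}\ell^{-2\gamma-2}|\U|^2 + Cs^{-2}\int \chi^2 e^{2s\eta}\ell^{-2\gamma+2}|\f|^2$. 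Because $\ell$ is bounded on $[0,T^*]$, all remaining $|\U|^2$-integrals collapse into $Cs^2\int_D e^{2s\eta}\ell^{-2\gamma-2}|\U|^2$; restricting the left-hand side to $\Delta\times(0,T^*)$ where $\chi\equiv 1$ then yields (3.2a).

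For (3.2b), I pick nested $\Delta \Subset D' \Subset D$. Under the added hypothesis $C_0\in C^1$ (together with $C_2\in C^1$) and $\f_y\in L^2_{\rm loc}$, the vector $\V := \U_y$ satisfies a system of the same form (\ref{eq:gen_para_system}) with coefficients still obeying the hypotheses and source term $\tilde\f := \f_y + C_0'\U$; the differentiation is made rigorous by translated difference quotients $h^{-1}(\tau_h\U-\U)$, for which (3.2a) applies uniformly in $h$, followed by $h\to 0$. Applying (3.2a) to $\V$ on $(\Delta,D')$ with exponent $\gamma$ bounds $\|e^{s\eta}\ell^{-\gamma}\U_{yy}\|^2_{L^2(\Delta\times(0,T^*))}$ by $Cs^2\|e^{s\eta}\ell^{-\gamma-1}\U_y\|^2_{L^2(D'\times(0,T^*))} + Cs^{-2}\|e^{s\eta}\ell^{-\gamma+1}(\f_y+C_0'\U)\|^2_{L^2(D'\times(0,T^*))}$. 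Applying (3.2a) a second time, now to $\U$ on $(D',D)$ with exponent $\gamma+1$, bounds the $\U_y$-factor by $Cs^2\|e^{s\eta}\ell^{-\gamma-2}\U\|^2_{L^2(D)} + Cs^{-2}\|e^{s\eta}\ell^{-\gamma}\f\|^2_{L^2(D)}$. Using the boundedness of $\ell$ to absorb $\|e^{s\eta}\ell^{-\gamma+1}\U\|^2$ into $\|e^{s\eta}\ell^{-\gamma-2}\U\|^2$ (and $\|e^{s\eta}\ell^{-\gamma+1}\f\|^2$ into $\|e^{s\eta}\ell^{-\gamma}\f\|^2$), I obtain the target bound on $\|e^{s\eta}\ell^{-\gamma}\U_{yy}\|^2_{L^2(\Delta\times(0,T^*))}$. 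Finally, I read (\ref{eq:gen_para_system}) as $\U_\tau = C_2\U_{yy}+(C_2'+C_1)\U_y+C_0\U+\f$, estimate pointwise $|\U_\tau|^2 \leq C(|\U_{yy}|^2+|\U_y|^2+|\U|^2+|\f|^2)$, and combine the preceding $\U_{yy}$-bound with one further use of (3.2a) for $\|e^{s\eta}\ell^{-\gamma}\U_y\|^2$ to conclude (3.2b).

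The hard part is bookkeeping: tracking the powers of $s$ and $\ell$ through two successive applications of (3.2a), and selecting the nested cutoff domains so that each invocation has enough room. The most delicate technical point is the rigorous $y$-differentiation of (\ref{eq:gen_para_system}): a priori $\U_y$ need not belong to $H^{2,1}_{\rm loc}$ and $C_1$ is only assumed locally bounded, so one proceeds via translation difference quotients applied to the equation, uses (3.2a) uniformly in the shift parameter, and takes the limit using $\U\in H^{2,1}$ together with the $C^1$-regularity of $C_0$ and $C_2$, which is precisely where the stronger hypothesis on $C_0$ in the second part of the lemma enters.
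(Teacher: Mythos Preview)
Your overall strategy coincides with the paper's: a cutoff-weighted energy (Caccioppoli) estimate for (\ref{3.2a}), then differentiation in $y$, two nested applications of (\ref{3.2a}), and finally the PDE itself to recover $\U_\tau$. The bookkeeping of powers of $s$ and $\ell$ is also the same.

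There is, however, one genuine gap in your derivation of (\ref{3.2b}). You assert that $\V=\U_y$ ``satisfies a system of the same form (\ref{eq:gen_para_system}) with coefficients still obeying the hypotheses.'' This is not true under the stated assumptions: differentiating the first-order term $C_1\U_y$ would produce $C_1'\U_y$, but $C_1$ is only assumed locally bounded, not $C^1$. Your proposed cure via difference quotients does not help either, since the equation for $h^{-1}(\tau_h\U-\U)$ acquires a source term $(D_hC_1)\U_y$ that is not controlled uniformly in $h$ without Lipschitz regularity of $C_1$. The paper resolves this by keeping the first-order term in \emph{divergence form}: after differentiating, $\U_y$ satisfies, in the weak sense,
\[
\partial_\tau\U_y-\partial_y(C_2\partial_y\U_y)-\partial_y(\widetilde C_1\U_y)-C_0\U_y=\f_y+(\partial_yC_0)\U,\qquad \widetilde C_1:=C_1+\partial_yC_2,
\]
which requires only that $\widetilde C_1$ be bounded (guaranteed since $C_2\in C^1$). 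One then observes that the proof of (\ref{3.2a}) carries over verbatim to a system whose first-order term is $\partial_y(C_1\U)$ rather than $C_1\partial_y\U$: after testing against $w\U$ and integrating by parts once, this term becomes $-(C_1\U,(w\U)_y)$, which is handled exactly as before. With this modification your argument is complete and matches the paper's.
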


\begin{proof}
Let $0 \leq \chi  \in C_c ^{\infty}(\R ) $ be supported in $D$ with $\chi(y)=1$ for all $y \in \Delta $, and put $w := \ell^{ - 2 \gamma } \chi e^{2 s \eta } . $ Then, with $( \cdot , \cdot ) $ denoting the inner product of $L^2 (Q ) ^n $,   
\begin{equation} \label{eq:L3.2_1}
- \left( (C_2 \U _y )_y , w \U \right) = (\f , w \U ) - (\U _{\tau } , w \U ) + \left( C_1 \U _y , w \U \right) + (C_0 \U , w \U ) ,   
\end{equation}   
and we estimate the different terms using integration by parts. For the term on the left we note that   
$- \left( (C_2 \U _y )_y , w \U \right) = \left( C_2 \U _y , w \U_y \right) + \left( C_2 \U_y , w_y \U \right) $,   
while $\left( C_2 \U_y , w_y \U \right) = - \left( C_2 \U , w_y \U _y \right) - \left( (w_y C_2 )_y \U, \U \right) $, so that by the symmetry of $C_2 $,   
$$   
2 | (C_2 \U_y , w_y \U ) | = | ( \U , (w_y C_2  )_y \U ) | \leq C s^2 || \ell ^{-\gamma - 1 } e^{s \eta } \U || ^2 _{L^2 (D \times (0, T^* ) } ,   
$$   
where we used that $|\partial _y ^{\alpha } w | \leq (s/\ell ) ^{\alpha } w $ for $\alpha \in \mathbb{N } . $  Since $C_2 (y) $ is continuous and positive-definite for all $y \in \Omega $ and since $D $ is compactly contained in $\Omega $, it follows that there exists a $c > 0 $ such that   
\begin{equation} \label{eq:L3.2_2}
- \left( (C_2 \U _y )_y , w \U \right) \geq c (\U_y , \U_y w ) - C s^2 \| e^{s \eta} \ell^{- \gamma -1 } \U \|^2_{L^2(D \times (0,T^* ) ) } ,
\end{equation}   
   
Turning to the terms on the right hand side of (\ref{eq:L3.2_1}), we can trivially bound the 0-th order term by a constant times $(\U , w \U ) $, while
$$
\left| \left( C_1 \U _y , w \U \right) \right| \leq C (\U , w \U )^{1/2 } (\U_y , w \U_y )^{1/2 } \leq C \varepsilon (\U_y , w \U_y ) + C \varepsilon ^{-1 }  (\U , w \U ) ,
$$
where $\varepsilon > 0 $ is arbitrary. Next, using that $\lim _{\tau \to 0, T^* } \ell ^{-2 \gamma } e^{2 s \eta } = 0 $ for any $s > 0 $, integration by parts with respect to the $\tau $-variable shows that $( \U _{\tau }, w \U) = - (\U , w \U _{\tau } ) - (\U , w_{\tau } \U ) $, and hence
$$
2 | \left( \U _{\tau } , w \U \right) | = | \left( \U , w_{\tau } \U \right) | \leq C s || \ell ^{- \gamma - 1 } e^{s \eta } \U || ^2 _{L^2 (D \times (0, T^* ) ) }  ,
$$
since $|\eta _{\tau } | \leq c \ell ^{-2 } $ and $\partial _{\tau } (\ell ^{- 2 \gamma } ) \leq c \ell ^{- 2 \gamma - 1 } . $ Finally, if we write
$$
(\f , w \U ) = ( (\ell / s ) w^{1/2 } \f , (s / \ell ) w^{1/2 } \U ) ,
$$   
and useYoung's inequality (for products), we see that $ | (\f , w \U ) | $can be bounded by $s^2 (\U , \ell ^{-2 } w \U ) + s^{-2 } (\f , \ell ^2 w \f ) . $ Combining all these estimates with (\ref{eq:L3.2_2}) and choosing $\varepsilon $ sufficiently small, (\ref{3.2a}) follows.
\medskip   
   
Next, if we differentiate the PDE with respect to $y $ we find that $\U _y $ is a solution in the weak sense of   
$$
\p_\tau \U_y - \p_y(C_2 \p_y \U_y) - \p_y \left( \widetilde{C }_1 \U_y \right) - C_0 \U_y = \f_y + (\partial _y C_0 ) \U ,   
$$   
with $\widetilde{C }_1 := C_1 + \partial _y C_2 . $ This is a system for $\U _y $ of the type (\ref{eq:gen_para_system}) but with first-order term $C_1 \partial _y \U $ replaced by $\partial _y (C_1 \U ) . $ It is clear from the proof that the bound (\ref{3.2a}) still remains valid for such a system and for weak solutions. Applying this bound with $D $ replaced by $\Delta \Subset \Delta_1 \Subset D $ we find that
\begin{eqnarray*}
& & \| e^{s \eta} \ell^{-\gamma} \U_{yy} \|^2_{L^2(\Delta \times (0,T^* ))} \\
& \leq & C \left(  s^2  \| e^{s \eta} \ell^{- \gamma - 1 } \U_y \|^2_{L^2(\Delta_1 \times (0,T^* ))} + s^{-2 }  \| e^{s \eta} \ell^{-\gamma + 1 } \f_y \|^2_{L^2(\Delta_1 \times (0,T^* ) ) } + s^{-2 } \| e^{s \eta} \ell^{-\gamma + 1 } \U \|^2_{L^2(\Delta_1 \times (0,T^* ))} \right).
\end{eqnarray*}
If we next apply \eqref{3.2a} again, with $\Delta _1 $ instead of $\Delta$ and $\gamma + 1 $ instead of  $\gamma $, we find that
\begin{eqnarray*}
& & \| e^{s \eta} \ell^{-\gamma } \U_{yy} \|^2_{L^2(\Delta \times (0,T^* ))} \\
& \leq & C \left(   s^4 \| e^{s \eta}\ell^{-(\gamma+2)} \U \|^2_{L^2(D \times (0,T^* ))} + \| e^{s \eta} \ell^{- \gamma } \f \|^2_{L^2(D \times (0,T^* ))} +  s^{-2 }  \| e^{s \eta} \ell^{-\gamma + 1 } \f_y \|^2_{L^2(D \times (0,T^* ))} \right) .
\end{eqnarray*}
Since $\U_\tau=A_2 (y) \U_{yy}+P_1(y,\p_y)\U+ \f$ with $P_1$ a first order operator, this together with (\ref{3.2a}) implies \eqref{3.2b}.   
\end{proof}

\section{Stability estimate for the linearized inverse problem}
   
Recall that $\Omega _1 = \{ y = \log (K / S^* ) : K \in I \} \Subset \Omega = \{ \log (K / S^* ) : K \in J \} . $ We will from now on assume, without essential loss of generality. that $S^* = 1 . $ Let $A_i (y) := \frac{1 }{2 } \Sigma _i (e^y ) ^2 $ for $i = 1, 2 . $ It follows from (\ref{2.6}) that $\w=\w_{\A_1}-\w_{\A_2}$ is the unique bounded solution of the linearized system
\begin{equation}\label{5.1}
\left\{
\begin{array}{llll}
\para{\p_\tau-\LL_{\A_1}}\w=\g \V,  & \textrm{in }\,\, \R\times(0,T),\cr
\w(y,0)=0,& \textrm{in }\,\,\R,\cr   
\end{array}
\right.
\end{equation}
with $G=\A_1-\A_2 $ supported on $\Omega _1 $, and $\V=(\w_{\A_2})_{yy}-(\w_{\A_2})_{y } . $ Conversely, the solution of (\ref{5.1}) can be written as such a difference, by taking $A_2  = A_1 - G . $ We want to bound $G $ in terms of $\w ( \cdot , \tau _*  ) . $ Note that $G $, although in principle matrix-valued, is diagonal, and so is given by $n $ functions, as is $\w (\cdot , \tau _*  ) . $ If we   
nterpret this as an inverse problem, it is not underdetermined.
\medskip

As a preparation, we first examine the properties of $\V (y, \tau ) $ which, as we will now explain, is related to the fundamental solution of an auxilliary parabolic system.   
Indeed, it is straightforward to verify that
\begin{equation} \label{eq:aux_PDE}
\partial _{\tau } \V- (\partial _y ^2 - \partial _y ) \left ( A_2 (y, \tau )  \V \right) - (R - Q ) \partial _y \V + ( B - Q ) \V = 0 ,
\end{equation}
with initial condition (recalling that $S^* = 1 $)
\begin{equation} \label{eq:IC_v}
\V (y, 0 ) = (\partial _y ^2 - \partial _y ) \max (1 - e^y , 0 ) e_{j^* } = \delta (y ) \, e_{j^* } .   
\end{equation}   
Hence $\V (y , \tau ) $ is the $j^* $-th column of the fundamental solution of the parabolic system (\ref{eq:aux_PDE}), and as such has the following properties.

\begin{lemma} \label{L4.1}   
Suppose that the coefficients of the matrix $A_2 $ are in $C_b ^{2, \alpha } (\mathbb{R } ) $ for some $\alpha \in (0, 1 ) . $ Then $\V \in C^{2, 1 } (\mathbb{R } \times \mathbb{R }_{> 0 } ) $ 
and that there exist for any $T > 0 $ positive constants $c $ and $C $ such that for $y \in \mathbb{R } $ and $0 < \tau \leq T $
\begin{equation} \label{eq:L4.1}
|\partial _y ^{\nu } \V (y, \tau ) | \leq \frac{C }{\tau ^{(\nu + 1 )/2 } } e^{- c |y|^2 / \tau } , \ \ \nu = 0, 1, 2 .
\end{equation}
If, moreover, $\left( e^{\tau  B } \right)_{ij^* } > 0 $ for all $i $, then all components of $\V (y, \tau ) $ remain strictly positive on compacta: there exists, for any compact subset $K \subset \mathbb{R } $ and any $\tau > 0 $, a constant $C_{\tau } > 0 $ such that      
\begin{equation}\label{5.3}
\inf_{y \in K } v_i(y, \tau ) > C_{\tau } , i = 1 , \ldots , n .
\end{equation}
\end{lemma}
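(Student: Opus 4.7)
The plan is to identify $\V$ as the $j^*$-th column of the fundamental matrix solution $\Gamma(y,\tau;z,0)$ of the weakly-coupled parabolic system (\ref{eq:aux_PDE}), and then to read off the $C^{2,1}$-regularity and Gaussian bounds from Eidelman's classical theory, while deferring the strict positivity to Appendix~B of the present paper.

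To apply that classical theory one first rewrites (\ref{eq:aux_PDE}) in non-divergence form by expanding $(\partial_y^2-\partial_y)(A_2\V)$. Since $A_2$ is diagonal with entries bounded below by $\sigma_{\min}^2/2$, the resulting principal part is uniformly elliptic and diagonal, so that the system is only weakly coupled through the lower-order terms. The hypothesis $A_2(e^{\cdot})\in C_b^{2,\alpha}(\mathbb{R})$ then guarantees that all the first- and zero-order coefficients (which involve $A_2'$ and $A_2''$) belong to $C_b^\alpha(\mathbb{R})$. The theory of Eidelman \cite{Eidelman} (see also \cite{F}) yields a unique fundamental matrix $\Gamma(y,\tau;z,0)$, of class $C^{2,1}$ for $\tau>0$, with Gaussian upper bounds
\[
|\partial_y^\nu \Gamma(y,\tau;z,0)|\leq \frac{C}{\tau^{(\nu+1)/2}}\exp\!\left(-\frac{c\,|y-z|^2}{\tau}\right),\qquad \nu=0,1,2,
\]
on every slab $\mathbb{R}^2\times(0,T]$. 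Taking $\V(y,\tau):=\Gamma(y,\tau;0,0)e_{j^*}$ and specialising to $z=0$ then immediately gives both the $C^{2,1}$-regularity of $\V$ and the estimate (\ref{eq:L4.1}).

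The strict positivity (\ref{5.3}) is the main obstacle of the proof, and is not a direct consequence of a parabolic maximum principle. Indeed, the coupling matrix $-(B-Q)$ appearing in (\ref{eq:aux_PDE}) has \emph{non-positive} off-diagonal entries (they equal $-b_{ij}\leq 0$), so the system is competitive rather than cooperative; a simple explicit computation with constant coefficients even shows that generic fundamental matrices of such systems can exhibit sign changes. One must therefore exploit the specific Markov-chain structure of $B$, and this is precisely the content of Appendix~B, which establishes that, provided $(e^{\tau B})_{ij^*}>0$ for every $i$, the $j^*$-th column of $\Gamma$ is componentwise strictly positive on $\mathbb{R}\times(0,\infty)$. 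Once this is granted, each $v_i(\cdot,\tau)$ is continuous and pointwise positive, so its infimum over the compact set $K$ is attained and supplies the strictly positive constant $C_\tau$ in (\ref{5.3}).
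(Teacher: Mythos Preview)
Your overall strategy matches the paper's: cite Eidel'man/Friedman for the $C^{2,1}$-regularity and the Gaussian bounds on $\partial_y^\nu\V$, and defer the strict positivity (\ref{5.3}) to Appendix~B. That part is fine.

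However, your explanatory paragraph about why positivity is delicate is based on a sign error. If you redo the derivation of (\ref{eq:aux_PDE}) --- write $\partial_\tau\w=A_2\V-(R-Q)\w_y+(B-Q)\w$ and apply $D:=\partial_y^2-\partial_y$ --- you obtain
\[
\partial_\tau\V \;=\; D(A_2\V)\;-\;(R-Q)\V_y\;+\;(B-Q)\V,
\]
so the zero-order coupling appearing on the right is $+(B-Q)$, whose off-diagonal entries are $+b_{ij}\ge 0$. The system is therefore \emph{cooperative}, not competitive, and Otsuka's theorem (quoted in Appendix~B) applies directly to give nonnegativity of the fundamental matrix. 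What Appendix~B adds is the \emph{strict} lower bound (\ref{eq:LB_Phi}), obtained by comparing the Levi-type series for $E$ with the heat semigroup and the constant matrix $B_*$; the Markov-chain condition $(e^{\tau B})_{ij^*}>0$ then forces strict positivity of the $j^*$-th column. Your sentence that ``generic fundamental matrices of such systems can exhibit sign changes'' and that Appendix~B ``exploits the specific Markov-chain structure of $B$'' mischaracterises what is actually going on: cooperativity (Otsuka's hypothesis) is precisely what is used, and is automatic here.

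This does not break your proof --- you still land on Appendix~B for (\ref{5.3}) --- but the commentary should be corrected so as not to obscure why the argument works.
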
   
   
\noindent For the first part of the theorem, see \cite{Eidelman}, \cite{F} (in particular theorem 2 of Chapter 9 plus equation (4.19)), or \cite{EIK}, theorem 2.2. The constants will depend on the constant $\sigma _{\rm min} $ of (\ref{1.2}) and on $\| A_2 \| _{C^{2, \alpha } } $, as well as, of course, on $B $, $R $ and $Q . $ Note that since $\V $ satisfies (\ref{eq:aux_PDE}), the estimate (\ref{eq:L4.1}) implies that
\begin{equation} \label{eq:5.3a}
| \partial _{\tau } \V (y, \tau ) | \leq \frac{C }{\tau ^{3 / 2 } }e^{- c |y|^2 / \tau } .
\end{equation}
The strict lower bound (\ref{5.3}) will be proved in Appendix B; it sharpens a result of Otsuka \cite{Otsuka}. The condition $\left( e^{\tau  B } \right)_{ij^* } > 0 $ means that any state $i $ can be reached, with positive probability, from the initial state $j^* . $ The validity of this condition for arbitrary $j^* $, that is, non-vanishing of all matrix elements of $e^{\tau B } $ for arbitrary $\tau > 0 $, is equivalent to the irreducibility of the continuous time Markov chain with transition-density matrix $B . $
\medskip

\begin{remark} \rm{It is because of this lemma that we have to impose the condition that $\Sigma _2 $, and therefore $A_2 $,  is of class $C^{2, \alpha } $ in theorem \ref{T1.1}. If $A_2 $ is only $C^1 $, we can write $(\partial _y ^2 - \partial _y ) A_2  = \partial _y A_2 \partial _y - \partial _y \left( A_2 + [A_2 , \partial _y ] \right) $, and interpret $\V $ as the weak solution of a parabolic system of the form $\partial _{\tau } \V -  \partial _y A_2 \partial _y \V + \partial _y (E \V ) + F \V = 0 $, with $E $ continuous, but we do not know whether the Gaussian estimates for in particular the derivatives of $\V $ still remain valid then.      
}   
\end{remark}   
   
The following lemma is the main technical step in the proof of our stability estimate. Recall that, by hypothesis, $G (y) = 0 $ outside of $\Omega _1 . $ Let $\chi \in C^{\infty } _c (\Omega ) $ be equal to 1 on a neighborhood of $\overline{\Omega }_1 $, and let $\omega \subset \Omega \setminus \overline{\Omega }_1 $ be an arbitrary neighborhood of ${\rm supp } (\chi _y ) . $ Note that $\omega $ will be disconnected, but that we can, and will, arrange for $\omega $ to have exactly two components, $\omega _1 $ and $\omega _2 . $      

\begin{lemma}\label{L5.2} There exists a constant $C>0 $, only depending on $|| A_1 ||_{C^1 (\mathbb{R } ) } $, $|| A_2 ||_{C^{2, \alpha } (\mathbb{R } ) } $, $B $, $\Omega _1 $, $\Omega _2 $, $\tau ^* $ and $n $, such that for any $G \in L^2 (\mathbb{R } ) ^n $ supported in $\Omega _1 $, the solution $\w $ of (\ref{5.1}) satisfies the estimate   
\begin{equation}\label{5.4}
\norm{G}^2_{L^2(\Omega_1)}
\leq C \left( \norm{\w(\cdot,\tau _* )}^2_{H^2(\Omega)} + \| \w \|_{L^2(\omega \times (0,T))}^2 \right) .   
\end{equation}   
\end{lemma}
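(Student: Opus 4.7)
The argument follows the Bukhgeim--Klibanov strategy. Fix a cut-off $\chi \in C_c^\infty(\Omega)$ equal to one on a neighbourhood of $\overline{\Omega}_1$ with $\operatorname{supp}(\chi_y) \subset \omega$, and set $\tilde{\w} := \chi\w$. Since $\chi G = G$, a direct computation gives $(\partial_\tau - \mathcal{L}_{A_1})\tilde{\w} = G\V + P\w$, where $P$ is a first-order differential operator in $y$ whose coefficients are supported in $\operatorname{supp}(\chi_y) \subset \omega$. Writing $\z := \partial_\tau\w$ and $\tilde{\z} := \partial_\tau\tilde{\w}$ and differentiating in $\tau$,
\begin{equation*}
(\partial_\tau - \mathcal{L}_{A_1})\tilde{\z} = G\,\partial_\tau\V + P\z.
\end{equation*}

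The Bukhgeim--Klibanov identity comes from evaluating the PDE for $\w$ at $\tau = \tau_*$ componentwise on $\Omega_1$: since $\chi \equiv 1$ and $P\w = 0$ there,
\begin{equation*}
g_i(y)\,v_i(y,\tau_*) = z_i(y,\tau_*) - (\mathcal{L}_{A_1}\w)_i(y,\tau_*), \qquad y \in \Omega_1.
\end{equation*}
By Lemma~\ref{L4.1}, the irreducibility hypothesis on the Markov chain guarantees $v_i(\cdot,\tau_*) \geq c > 0$ on $\overline{\Omega}_1$, hence
\begin{equation*}
\|G\|_{L^2(\Omega_1)}^2 \leq C\bigl(\|\z(\cdot,\tau_*)\|_{L^2(\Omega_1)}^2 + \|\w(\cdot,\tau_*)\|_{H^2(\Omega)}^2\bigr),
\end{equation*}
and the task is reduced to estimating $\|\z(\cdot,\tau_*)\|_{L^2(\Omega_1)}^2$ by the right-hand side of \eqref{5.4}.

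Choose $\psi \in C^2(\overline{\Omega})$ with $\psi' \neq 0$ on $\overline{\Omega}$ and form $\eta$ as in Section~\ref{sec:Carleman}. Since $e^{s\eta(y,\tau)} \to 0$ as $\tau \to 0^+$, the fundamental theorem of calculus gives
\begin{equation*}
e^{2s\eta(y,\tau_*)}z_i(y,\tau_*)^2 = \int_0^{\tau_*}\partial_\tau\bigl(e^{2s\eta}z_i^2\bigr)\,d\tau,
\end{equation*}
and integrating over $\Omega_1$, applying Cauchy--Schwarz, and using that $e^{-2s\eta(\cdot,\tau_*)}$ and $|s\eta_\tau|e^{2s\eta}$ are both bounded on $\Omega_1\times(0,\tau_*)$ at fixed $s$, together with $\z = \tilde{\z}$ on $\Omega_1$,
\begin{equation*}
\|\z(\cdot,\tau_*)\|_{L^2(\Omega_1)}^2 \leq C_s \int_{\Omega_1\times(0,\tau_*)}\! e^{2s\eta}\bigl(|\tilde{\z}|^2 + |\partial_\tau\tilde{\z}|^2\bigr)\,dy\,d\tau.
\end{equation*}
Because $\tilde{\z}$ has compact spatial support in $\Omega$, Theorem~\ref{L3.1} applied to the PDE for $\tilde{\z}$, combined with Lemma~\ref{L3.2} to pick up $\partial_\tau\tilde{\z}$ in the weighted norm, bounds this in turn, up to powers of $s$, by
\begin{equation*}
C\int_Q e^{2s\eta}\bigl(|G\,\partial_\tau\V|^2 + |P\z|^2\bigr)\,dy\,d\tau.
\end{equation*}
For the first source, the Gaussian bound \eqref{eq:5.3a} for $\partial_\tau\V$ combined with the explicit form of $\eta$ near $\tau = 0$ produces $\int_0^{T^*}\! e^{2s\eta(y,\tau)}|\partial_\tau\V(y,\tau)|^2\,d\tau \leq Cs^{-2}$ uniformly in $y$, hence $\int_Q e^{2s\eta}|G\,\partial_\tau\V|^2 \leq Cs^{-2}\|G\|_{L^2(\Omega_1)}^2$. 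For the commutator, $P\z$ is supported in $\omega$ where $G \equiv 0$; choosing $D$ with $\operatorname{supp}(\chi_y) \subset \omega \Subset D \Subset \Omega\setminus\overline{\Omega}_1$ and applying Lemma~\ref{L3.2} to $\w$ on $D$ (source $G\V$ vanishing there) controls $|\partial_y\w|$ and $|\partial_\tau\w|$ on $\omega$ by a multiple of $\|\w\|_{L^2(\omega\times(0,T))}$, with an auxiliary bootstrap (differentiate the PDE in $y$ and reapply Lemma~\ref{L3.2}) to deal with the $\partial_y\partial_\tau\w$ piece of $P\z$.

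Assembling these inequalities,
\begin{equation*}
\|G\|_{L^2(\Omega_1)}^2 \leq \frac{C}{s^2}\|G\|_{L^2(\Omega_1)}^2 + C_s\bigl(\|\w(\cdot,\tau_*)\|_{H^2(\Omega)}^2 + \|\w\|_{L^2(\omega\times(0,T))}^2\bigr),
\end{equation*}
and fixing $s$ large enough that $C/s^2 < 1/2$ absorbs the first right-hand term into the left, yielding \eqref{5.4}. The main obstacle is the estimate of $\int_Q e^{2s\eta}|G\,\partial_\tau\V|^2$: since $\partial_\tau\V$ inherits a $\tau^{-3/2}$ singularity at $\tau = 0$ from the $\delta$-function initial data of $\V$, one must extract a genuinely negative power of $s$ from the interplay between the Carleman exponent and the Gaussian factor, rather than just a finite $s$-dependent constant. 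The limited regularity of $\z$ near $\tau = 0$ is also the reason we cannot rely on standard energy estimates and must invoke the weighted parabolic estimates of Lemma~\ref{L3.2} when passing between the various Carleman-weighted norms in the argument.
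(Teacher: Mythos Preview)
Your overall strategy matches the paper's Bukhgeim--Klibanov argument, but there is a gap in the bookkeeping of the Carleman weight that prevents the absorption from closing. When you pass from the weighted to the unweighted norm of $\z(\cdot,\tau_*)$ via
\[
\|\z(\cdot,\tau_*)\|_{L^2(\Omega_1)}^2 \leq C_s \int_{\Omega_1\times(0,\tau_*)} e^{2s\eta}\bigl(|\tilde{\z}|^2 + |\partial_\tau\tilde{\z}|^2\bigr),
\]
the constant $C_s$ contains $\sup_{\Omega_1} e^{-2s\eta(\cdot,\tau_*)}$, which is \emph{exponentially large} in $s$ since $\eta<0$. Your bound $\int_0^{T^*} e^{2s\eta}|\partial_\tau\V|^2\,d\tau \le C s^{-2}$ is correct, but tracing through, the coefficient of $\|G\|_{L^2(\Omega_1)}^2$ on the right of your assembled inequality is not $C/s^2$ but $C_s\cdot(\text{poly in }s)\cdot s^{-2}$, which diverges as $s\to\infty$ and cannot be absorbed into the left.

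The paper avoids this by never removing the weight until the very end. It starts from the \emph{weighted} quantity $\|e^{s\eta(\cdot,\tau_*)}G\,\V(\cdot,\tau_*)\|^2$ (not $\|G\|^2$), bounds it by $\|e^{s\eta(\cdot,\tau_*)}\chi\w_\tau(\cdot,\tau_*)\|^2 + C\|\w(\cdot,\tau_*)\|_{H^2(\Omega)}^2$, and then uses the fundamental theorem of calculus together with the Carleman estimate to get a factor $Cs^{-1}$, with $C$ independent of $s$, in front of $\|e^{s\eta}(\partial_\tau-\mathcal L_{A_1})\z\|^2$. The crucial pointwise bound is $\sup_{\tau}|\V_\tau(y,\tau)|^2 e^{2s\eta(y,\tau)} \leq C\,e^{2s\eta(y,\tau_*)}$ (equation~(\ref{eq:5.9.5})), which \emph{retains} the same spatial weight $e^{2s\eta(\cdot,\tau_*)}$ as on the left rather than discarding it. One then obtains, using the lower bound on $\V(\cdot,\tau_*)$, an inequality of the form $\|e^{s\eta(\cdot,\tau_*)}G\|^2 \leq Cs^{-1}\|e^{s\eta(\cdot,\tau_*)}G\|^2 + (\text{data terms})$, absorbs at the weighted level, fixes $s$, and only then removes the weight via the now harmless lower bound on $e^{s\eta(\cdot,\tau_*)}$ over $\overline\Omega_1$. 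Your argument can be repaired by this single change: keep the weight $e^{s\eta(\cdot,\tau_*)}$ on both $G$ and $\z(\cdot,\tau_*)$ throughout, and strip it only after $s$ has been fixed.
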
   
   
\begin{proof}   
Since $\chi G = G $ and since $G $ and $\w $ are related by (\ref{5.1}), we have   
\begin{eqnarray} \nonumber
\| e^{s \eta (y , \tau _*  ) } G(y) \V (y, \tau _*  ) \|_{L^2 (\mathbb{R } ) } ^2 &=&\| e^{s \eta (y , \tau _*  ) } \chi (y) G(y) \V (y, \tau _*  ) \|_{L^2 (\mathbb{R } ) } ^2 \\
&\leq & 2 \| e^{s \eta (y , \tau _*  ) } \chi (y) \w _{\tau } (y , \tau _*  ) \|_{L^2 (\mathbb{R } ) } ^2 + 2 \| \chi (y) \mathcal{L }_{A_1 } \w (y, \tau _*  ) \| _{L^2 (\mathbb{R } ) } ^2 \nonumber \\
&\leq & 2 \| e^{s \eta (y , \tau _*  ) } \chi (y) \w _{\tau } (y , \tau _*  ) \|_{L^2 (\mathbb{R } ) } ^2 + C \| \w (y , \tau _*  ) \|_{H^2 (\Omega ) } ^2 , \label{eq:alt_proof_1}
\end{eqnarray}
with $C $ depending on $|| A_1 ||_{\infty } $ and on $|| R || $, $|| B || $ and $||Q || . $ We next seek to control the first term on the right by applying the Carleman estimate of Theorem \ref{L3.1} to the function $\z = \chi \w_\tau $, which satisfies the PDE   
\begin{equation} \label{eq:alt_proof_2}
\z _{\tau } - \mathcal{L }_{A_1 } \z = G \V _{\tau } + \mathcal{Q }_1 \w _{\tau } ,
\end{equation}
where $\mathcal{Q}_1 := [ \chi , \LL _{\A_1 } ] $ is a first order partial differential operator in $y $ with coefficients supported in $K := {\rm supp } (\chi _y ) \subset \omega . $ Since $\lim_{\tau \downarrow 0} e^{2s\eta(y,\tau)} = 0$, the first term on the right hand side of (\ref{eq:alt_proof_1}) can be bounded by
\begin{eqnarray*}
\| e^{s \eta (y , \tau _*  ) } \z (y, \tau _*  ) \| _2 ^2 &=& \int _0 ^{\tau _*  } \partial _{\tau } || e^{s \eta } \z ||_2 ^2 d \tau = 2 \int _0 ^{\tau _*  } (s\eta _{\tau } \z + \z_{\tau } , \z ) _{L^2 (\mathbb{R } ) } \, e^{2 s \eta (y, \tau ) } d\tau \\
&\leq & C s^{-1 } \left( \| \frac{s }{\ell } \z e^{s \eta } ||_{L^2 (\Omega \times (0, \tau _*  ) ) } ^2 + | ( s^{-1/2 } \z _{\tau } , s^{3/2 } e^{2 s \eta } \z )_{L^2 (\Omega \times (0, \tau _*  ) ) } | \right) ,
\end{eqnarray*}
where we used that $|\eta _{\tau } (y, \tau ) | \leq C / \ell (\tau ) ^2 . $ Since $\ell (\tau ) $ is bounded on $(0, T^* ) $,   
\begin{eqnarray*}
| ( s^{-1/2 } \z _{\tau } , s^{3/2 } e^{2 s \eta } \z )_{L^2 (\Omega \times (0, \tau _*  ) ) }  | &\leq & C | \left( (s /\ell )^{-1/2 } \z _{\tau } , (s / \ell )^{3/2 } e^{2 s \eta } \z \right) _{L^2 (\Omega \times (0, \tau _*  ) ) } \\
&\leq & C \left( \| (s/\ell )^{-1/2 } e^{s \eta } \z _{\tau } \| _{L^2 (\Omega \times (0, \tau _*  )  ) } ^2 + \| (s/\ell )^{3/2 } e^{s \eta } \z \| _{L^2 (\Omega \times (0, \tau _*  ) ) } \right) .   
\end{eqnarray*}
Replacing the $L^2 $-norms in the last line by those of $L^2 (Q ) $, the Carleman estimate (\ref{eq:Carleman_est}) and equation (\ref{eq:alt_proof_2}) then
imply that for sufficiently large $s $,
$$
\| e^{s \eta (y , \tau _*  ) } \z (y, \tau _*  ) \| _{L^2 (Q) } ^2 \leq C s^{-1 } \left( || e^{s \eta } G \V _{\tau } ||_{L^2 (Q) } ^2 + \|
e^{s\eta} \w_\tau \|_{L^2(K \times (0,T^* ) ) }^2 + \| e^{s\eta} \w_{\tau y} \|_{L^2(K \times (0,T^* ) ) }^2 \right) ,      
$$
since $\mathcal{Q }_1 $ is a first order partial differential operator supported on $K . $ Inserting this into (\ref{eq:alt_proof_1}) we find that
\begin{eqnarray}
\| e^{s\eta (y, \tau _* )} G (y) \V (y, \tau _* ) \|_{L^2(\Omega_1)}^2
& \leq & C s^{-1 } \left(  \| e^{s\eta} G \V _{\tau }  \|_{L^2(\Omega_1 \times (0,T^* ) ) }^2 + \| e^{s\eta} \w_\tau  \|_{L^2(K \times (0,T^* ) ) }^2 \right. \nonumber \\
& & \hspace*{.5cm} + \left. \|e^{s\eta} \w_{\tau y} \|_{L^2(K \times (0,T^* ) ) }^2 \right) + C \| \w (y, \tau _*  ) \| _{H^2 (\Omega ) } ^2 . \label{5.9}
\end{eqnarray}
We next show that the first term on the right can be absorbed into the left hand side if $s $ is sufficiently large. We claim that there exists a constant $C > 0 $ such that for all $y \in \Omega $,
\begin{equation} \label{eq:5.9.5}
\sup _{\tau \in (0, T^* ) } |\V _{\tau } (y, \tau ) |^2 e^{2 s \eta (y, \tau ) } \leq C e^{2 s \eta (y, \tau _*  ) } .     
\end{equation}
This is basically a consequence of the fact that, by construction, $\eta (y, \tau ) $ has its maximum on $(0, T^* ) $ in $\tau = T^* / 2 = \tau _*  $ but we have to take some care with the singularity of $\V _{\tau } (\cdot , \tau ) $ in $\tau = 0 . $ Write $\eta (y, \tau ) = - \widetilde{\eta }(y) / \ell (\tau ) $, with $\widetilde{\eta }(y) = e^{\lambda L } - e^{\lambda \psi } $ and let $0 < a < \tau _*  . $ Since $\ell (\tau ) ^{-1 } $ assumes its absolute minimum on $(0, T ) $ in $\tau = \tau _*  $, there exists an $\varepsilon > 0 $ such that $(1 - \varepsilon ) \inf _{(0, a ] } \ell (\tau ) ^{-1 } > \ell (\tau _*  ) ^{-1 } . $ By (\ref{eq:5.3a}), $|\V _{\tau } (y, \tau ) | \leq C \tau ^{- 3 / 2 } $ on $\Omega . $ Since $\inf _{\Omega } \widetilde{\eta } > 0 $ , there exists a constant $C_{\varepsilon } $ such that $\tau ^{- 3 } \leq C_{\varepsilon } e^{ 2 s \varepsilon \widetilde{\eta } / \ell (\tau ) } $ for $s \geq s_0 $, and therefore   
$$
\max _{\tau \in (0, a ] } |\V_{\tau } (y, \tau ) |^2 e^{2 s \eta (y, \tau ) } \leq C_{\varepsilon } \max _{\tau \in (0, a ] } e^{ - 2 s (1 - \varepsilon ) \widetilde{\eta }(y) / \ell (\tau )  } \leq C_{\varepsilon } e^{ - 2 s \widetilde{\eta }(y) / \ell (\tau _*  ) } = C_{\varepsilon } e^{2 s \eta (y, \tau _*  ) } .
$$
Since $|\V _y (y, \tau ) | $ is bounded on $[a, T^* ] $, a similar estimate holds on $[a, T^* ] $, and (\ref{eq:5.9.5}) follows.
\medskip

As a consequence, we have that $ s^{-1 } \| e^{s\eta} G \V _{\tau } \|_{L^2(\Omega_1 \times (0,T^* ) ) } \leq C s^{-1 } T^* \| e^{s\eta(\cdot,\tau _* )} G \|_{L^2(\Omega_1)}$. Hence by (\ref{5.3}),  (\ref{5.9}) implies that for sufficiently large $s $,
\begin{equation}
\|  e^{s\eta(\cdot,\tau _* )} G \|_{L^2(\Omega_1)}^2 \leq C \| \w (y, \tau _*  ) \| _{H^2 (\Omega ) } ^2 + C s^{-1 } \left( \|  e^{s\eta} \w_\tau \|_{L^2(K \times (0,T^* ) )} ^2 + \| e^{s\eta} \w_{\tau y} \|_{L^2(K \times (0,T^* ) ) }^2 \right) . \label{5.9b}
\end{equation}
To finish the proof, we use Lemma \ref{L3.2} to show that the last two terms on the right can be bounded by $\| \w \| _{L^2 (\omega \times (0, T^* ) ) } $ where we recall that $\omega \subset \Omega \setminus \Omega _1 $ is an arbitrarily small neighborhood of $K . $ Lemma \ref{L3.2} applies since $A_1 $ and $G $ are both $C^1 $, by assumption. Choose $\omega ' $ such that $K \subset \omega ' \Subset \omega . $ By \eqref{3.2a} with $\U=\w_\tau$, $\gamma=0 $, $\Delta = K$ and $D=\omega ' $, we find that
$$
\| e^{s \eta} \w_{\tau y} \|_{L^2(K \times (0,T^* ) ) } \leq Cs \| e^{s \eta} \ell^{-1} \w_{\tau} \|_{L^2(\omega ' \times (0,T^* ) ) } ,   
$$
since $(\partial _{\tau } - \mathcal{L }_{A_1 } ) \w _{\tau } =  G \V _{\tau } $ vanishes on $\omega ' $. Applying \eqref{3.2b} with $\U=\w$, $\gamma  = 0 $ or 1 and $\Delta=\omega ' $ and $D=\omega $ we then see (again using the vanishing of $G $, on $\omega $ this time), that
$$
 \|  e^{s\eta} \w_\tau \|_{L^2(K \times (0,T))}^2 + \| e^{s\eta} \w_{\tau y} \|_{L^2(K \times (0,T))}^2 \leq s^6 \| e^{s \eta} \ell^{ - 3 } \w \|_{L^2(\omega \times (0,T))}^2 ,
$$
which is bounded by $C || \w ||_{L^2 (\omega \times (0, T^* ) ) } $ for any $s \geq 1 $, since $\sup_{(y,\tau)\in \omega \times(0,T^* )} s^3 \ell^{-3 }(\tau) e^{s\eta(y,\tau)} < \infty $, for any $s . $ Since $e^{s \eta (\cdot , \tau _*  ) } $ is bounded from below by a strictly positive constant on  $\Omega _1 $, the lemma follows.
\end{proof}

If we do not suppose that $G $ has compact support, we can generalise Lemma \ref{L5.2} as follows:   

\begin{corollary} \label{cor:L5.2}   
Let $\Omega _1 \Subset \Omega \subset \mathbb{R } $ be bounded open intervals and $\omega \Subset \Omega \setminus \overline{\Omega }_1 $ an open subset such that $\omega $ has non-zero intersection with each component of $\Omega \setminus \Omega _1 $ and such that, moreover, $0 \notin \overline{\omega } . $ Then there exists a constant $C > 0$ such that if $\w $ solves (\ref{5.1}) on $\Omega $, then   
\begin{equation} \label{eq:5.10}
|| G ||_{L^2 (\Omega _1 ) } ^2 \leq C \left( ||\w (\cdot , \tau _*  ) ||_{H^2 (\Omega ) } ^2 + || \w ||_{L^2 (\omega \times (0, T^* ) ) } ^2 + 
|| G ||_{H^1 (\omega ) } ^2 \right) .   
\end{equation}
\end{corollary}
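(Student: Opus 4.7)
The strategy is to follow the proof of Lemma \ref{L5.2} step by step, carefully tracking the additional terms that arise because $G$ is no longer supported in $\Omega_1$. First I would choose a cutoff $\chi\in C_c^\infty(\Omega)$ with $\chi\equiv 1$ on a neighborhood of $\overline{\Omega}_1$, $\mathrm{supp}(\chi_y)\Subset\omega$, and $\mathrm{supp}(\chi)\subset\overline{\Omega}_1\cup\omega$. Since $\chi G=G$ on $\Omega_1$, multiplying the PDE identity $G\V=\w_\tau-\LL_{\A_1}\w$ by $\chi$ and restricting the $L^2$-norm to $\Omega_1$ yields, as in (\ref{eq:alt_proof_1}),
\begin{equation*}
\|e^{s\eta(\cdot,\tau_*)}G\V(\cdot,\tau_*)\|^2_{L^2(\Omega_1)} \leq 2\|e^{s\eta(\cdot,\tau_*)}\chi\w_\tau(\cdot,\tau_*)\|^2_{L^2(\mathbb{R})} + C\|\w(\cdot,\tau_*)\|^2_{H^2(\Omega)},
\end{equation*}
and the Carleman estimate applied to $\z=\chi\w_\tau$, which still satisfies $\z_\tau-\LL_{\A_1}\z=\chi G\V_\tau+\mathcal{Q}_1\w_\tau$, goes through unchanged.

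\textbf{The new source contribution.} The Carleman estimate now produces $s^{-1}\|e^{s\eta}\chi G\V_\tau\|^2_{L^2(Q)}$ on the right-hand side. I would split this into a contribution over $\Omega_1\times(0,T^*)$ and a contribution over $(\mathrm{supp}(\chi)\setminus\overline{\Omega}_1)\times(0,T^*)\subset\omega\times(0,T^*)$. The first is treated exactly as in Lemma \ref{L5.2} via (\ref{eq:5.9.5}), and absorbed into the left-hand side for large $s$ using the lower bound (\ref{5.3}). The second is estimated using the hypothesis $0\notin\overline{\omega}$: together with the Gaussian bound (\ref{eq:5.3a}), this yields $|\V_\tau|\leq C$ uniformly on $\omega\times(0,T^*)$, so that the piece is bounded by $Cs^{-1}\|G\|^2_{L^2(\omega)}$.

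\textbf{The $H^1(\omega)$-term from Lemma \ref{L3.2}.} At the very end of the proof of Lemma \ref{L5.2}, estimates (\ref{3.2a}) and (\ref{3.2b}) are invoked to control $\|e^{s\eta}\w_\tau\|^2_{L^2(K\times(0,T^*))}$ and $\|e^{s\eta}\w_{\tau y}\|^2_{L^2(K\times(0,T^*))}$, exploiting that the sources $G\V_\tau$ and $G\V$ of the equations for $\w_\tau$ and $\w$ vanish on $\omega$. Here they do not, and applying (\ref{3.2a}) to $\U=\w_\tau$ together with (\ref{3.2b}) to $\U=\w$ produces, in addition to the terms already present, the corrections $s^{-2}\|e^{s\eta}\ell G\V_\tau\|^2_{L^2(\omega\times(0,T^*))}$, $\|e^{s\eta}G\V\|^2_{L^2(\omega\times(0,T^*))}$ and $s^{-2}\|e^{s\eta}\ell(G\V)_y\|^2_{L^2(\omega\times(0,T^*))}$. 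The first two are bounded by $C\|G\|^2_{L^2(\omega)}$ using the uniform bounds on $\V$ and $\V_\tau$ on $\omega\times(0,T^*)$ provided by Lemma \ref{L4.1} and (\ref{eq:5.3a}). For the third, writing $(G\V)_y=G_y\V+G\V_y$ and using the analogous uniform bound on $\V_y$ yields a contribution majorised by $C\|G\|^2_{H^1(\omega)}$, and it is precisely here that the $H^1$-norm in (\ref{eq:5.10}) originates.

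\textbf{Assembly and main obstacle.} Combining these estimates, absorbing the residual $\|e^{s\eta(\cdot,\tau_*)}G\|^2_{L^2(\Omega_1)}$ contribution into the left-hand side for $s$ sufficiently large, and using that $e^{s\eta(\cdot,\tau_*)}$ is bounded below by a positive constant on $\Omega_1$, delivers (\ref{eq:5.10}). The principal obstacle is bookkeeping: verifying that the only genuinely new ingredient beyond the proof of Lemma \ref{L5.2} is the uniform control of $\V$, $\V_y$ and $\V_\tau$ on $\omega\times(0,T^*)$ (which uses $0\notin\overline{\omega}$ crucially), and that no source term produces a contribution beyond $\|G\|^2_{H^1(\omega)}$.
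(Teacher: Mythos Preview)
Your overall strategy is correct and matches the paper's, but there is a small technical slip in your cutoff construction. The requirement $\mathrm{supp}(\chi)\subset\overline{\Omega}_1\cup\omega$ is incompatible with $\mathrm{supp}(\chi_y)\Subset\omega$ and $\chi\equiv 1$ near $\overline{\Omega}_1$: since $\omega\Subset\Omega\setminus\overline{\Omega}_1$, there is a gap between $\overline{\Omega}_1$ and $\omega$, and $\chi$ is forced to equal $1$ on that gap (it can only change on $\mathrm{supp}(\chi_y)\subset\omega$). Hence your splitting of the Carleman source $s^{-1}\|e^{s\eta}\chi G\V_\tau\|^2_{L^2(Q)}$ into pieces over $\Omega_1$ and over $\omega$ misses the gap region, and the gap contribution can neither be absorbed into your left-hand side (which you restricted to $\Omega_1$) nor bounded by $\|G\|_{L^2(\omega)}^2$.

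The paper's proof avoids this by not restricting to $\Omega_1$ at the outset: it keeps $\|e^{s\eta(\cdot,\tau_*)}\chi G\V(\cdot,\tau_*)\|_{L^2(\mathbb{R})}^2$ on the left, uses the lower bound (\ref{5.3}) on the compact set $\mathrm{supp}(\chi)$ to minorise this by $c\|e^{s\eta(\cdot,\tau_*)}\chi G\|_{L^2(\mathbb{R})}^2$, and then applies (\ref{eq:5.9.5}) on all of $\mathrm{supp}(\chi)\subset\Omega$ to absorb the \emph{entire} Carleman source $Cs^{-1}\|e^{s\eta(\cdot,\tau_*)}\chi G\|_{L^2(\mathbb{R})}^2$ for large $s$, without any splitting. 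Since $\chi=1$ on $\Omega_1$, this directly controls $\|G\|_{L^2(\Omega_1)}^2$. The rest of your argument --- invoking Lemma~\ref{L3.2} and noting that the non-vanishing of $G\V$, $G\V_\tau$, $(G\V)_y$ on $\omega$ produces additional terms bounded by $C\|G\|_{H^1(\omega)}^2$ thanks to the uniform bounds on $\V$, $\V_y$, $\V_\tau$ over $\omega\times(0,T^*)$ coming from $0\notin\overline{\omega}$ --- is exactly what the paper does.
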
   
   
\begin{proof} Let $\chi \in C^{\infty } _c (\mathbb{R } ) $ such that $\chi = 1 $ on a neighborhood of $\overline{\Omega }_1 $, while ${\rm supp }(\chi _y ) \subset \omega . $ Then the proof of (\ref{5.9b}) shows that
\begin{equation} \nonumber
\| e^{s \eta (y, \tau _*  ) } \chi G \| _{L^2 (\mathbb{R } ) } ^2 \leq C \| \w \| _{H^2 ({\rm supp } (\chi ) ) } ^2 + C s^{-1 } \left( \|  e^{s\eta} \w_\tau \|_{L^2({\rm supp } (\chi _y ) \times (0,T^* ) ) }^2 + \| e^{s\eta} \w_{\tau y} \|_{L^2 (\rm{supp } (\chi _y ) \times (0,T^* ) ) }^2 \right) .
\end{equation}
An application of lemma \ref{L3.2} then shows that since $\omega $ is a bounded open neighborhood of the ${\rm supp }(\chi _y ) $, then
\begin{eqnarray*}
\| e^{s \eta (y, \tau _*  ) } \chi G \| _{L^2 (\mathbb{R } ) }^2 &\leq & C \left( \| \w \| _{H^2 ({\rm supp } (\chi ) ) } ^2 + s^6 \| e^{s \eta } \ell ^{-3 } \w \| _{L^2 (\omega \times (0, T^* ) ) } ^2 \right. \\
&& \left. + s \| e^{s \eta } \ell ^{-1 } G \V _{\tau } \| _{L^2 (\omega \times (0, T^* ) ) } ^2 + s^{-1 } \| e^{s \eta } (G \V )_y \| _{L^2 (\omega \times (0, T^* ) ) } ^2 \right) ,   \nonumber
\end{eqnarray*}
where the norms of $G $ and $G_y $ on the right occur since $G $ is no longer 0 on a neighborhood of ${\rm supp }(\chi _y ) . $ The corollary follows by observing that $\V $, $\V _{\tau } $ and $\V _y $ are bounded on $\omega \times (0, T^* $ ), since $0 \notin \overline{\omega } $, and that $\chi = 1 $ on $\overline{\Omega }_1 . $   
\end{proof}   
   
We observe in passing that the $H^1 (\omega ) $-norm of $G $ in (\ref{eq:5.10}) can be replaced by $C_{\varepsilon } || G ||_{L^2 (\omega ) } ^2 +  \varepsilon || G_y ||_{L^2 (\omega ) } ^2 || $ for arbitrary $\varepsilon > 0 $, by choosing $s $ sufficiently large.

\section{Completion of the proof of Theorem \ref{T1.1}}
\label{sec:Comp}
The final step in the proof is to get rid of the last term on the right hand side of (\ref{5.4}). We proceed in a succession of lemmas. Inspired by \cite{SU} we start by establishing   
   
\begin{lemma}\label{L6.1}   
Let $X $, $Y_1 \subset Y $ and $Z $ be four Banach spaces such that the inclusion $id : Y_1 \to Y $ is continuous. Let $\mathcal{A} : X \to Y$ be a bounded injective linear operator, $\mathcal{K}: X \to Z$ be a compact linear operator and let $X_1 \subset X $ be a linear subspace which is mapped into $Y_1 $ by $\mathcal{A } $ and for which there exists a constant $C>0$ such that      
\begin{equation}\label{6.1}
\norm{f}_X \leq C \norm{\mathcal{A}f}_{Y_1 } +\norm{\mathcal{K}f}_Z ,   
\end{equation}   
for all $f \in X_1 . $ Then there exists a (in general different) constant $C > 0 $ such that for all $f \in X_1 $ we have that   
\begin{equation}\label{6.2}
\norm{f}_X \leq C \norm{\mathcal{A}f}_{Y_1 } .
\end{equation}
\end{lemma}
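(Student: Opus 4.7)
The plan is to argue by contradiction using a standard compactness-uniqueness argument. Suppose (\ref{6.2}) fails. Then for every integer $m \geq 1$ there exists $f_m \in X_1$ with $\|f_m\|_X = 1$ and $\|\mathcal{A}f_m\|_{Y_1} \leq 1/m$, so that $\mathcal{A}f_m \to 0$ in $Y_1$ and, via the continuous inclusion $Y_1 \hookrightarrow Y$, also in $Y$.

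Since $\{f_m\}$ is bounded in $X$ and $\mathcal{K}:X \to Z$ is compact, a subsequence $\{\mathcal{K}f_{m_k}\}$ converges in $Z$. I would then apply the assumed inequality (\ref{6.1}) to the differences $f_{m_k} - f_{m_l} \in X_1$ (using that $X_1$ is a linear subspace and that $\mathcal{A}(X_1) \subset Y_1$), which gives
\begin{equation*}
\|f_{m_k} - f_{m_l}\|_X \leq C\|\mathcal{A}f_{m_k} - \mathcal{A}f_{m_l}\|_{Y_1} + \|\mathcal{K}f_{m_k} - \mathcal{K}f_{m_l}\|_Z.
\end{equation*}
Both terms on the right tend to zero as $k, l \to \infty$ (the first because $\mathcal{A}f_m \to 0$ in $Y_1$, the second by the Cauchy property of the convergent subsequence in $Z$). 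Thus $\{f_{m_k}\}$ is Cauchy in $X$ and, since $X$ is a Banach space, converges to some $f \in X$ with $\|f\|_X = 1$.

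To conclude, observe that $\mathcal{A}$ being bounded from $X$ into $Y$ yields $\mathcal{A}f_{m_k} \to \mathcal{A}f$ in $Y$, while on the other hand $\mathcal{A}f_{m_k} \to 0$ in $Y$ by the continuous embedding $Y_1 \hookrightarrow Y$. Uniqueness of limits forces $\mathcal{A}f = 0$, and the injectivity of $\mathcal{A}:X \to Y$ then gives $f = 0$, contradicting $\|f\|_X = 1$.

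The only mild subtlety, and really the only point to watch, is that the limit $f$ is only known to lie in $X$, not in $X_1$, so one must use injectivity of $\mathcal{A}$ on all of $X$ rather than on $X_1$ to conclude; this is exactly what the hypothesis provides. Otherwise the argument is routine and the bulk of the work is bookkeeping of the two topologies $Y_1$ and $Y$.
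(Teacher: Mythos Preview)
Your proof is correct and follows essentially the same compactness--uniqueness argument as the paper: contradict (\ref{6.2}), extract a convergent subsequence via compactness of $\mathcal{K}$, use (\ref{6.1}) on differences to get a Cauchy sequence in $X$, and derive a contradiction from the injectivity of $\mathcal{A}$ on $X$. Your explicit remark that the limit $f$ lies only in $X$ (not necessarily in $X_1$), and that this is why injectivity is assumed on all of $X$, is a worthwhile clarification that the paper leaves implicit.
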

\begin{proof}
We argue by contradiction. Assume that (\ref{6.2}) does not hold. Then there exists a sequence $(f_n)_n$ in $X_1 $ such that $\norm{f_n}_X = 1$ for all $n$ and $|| \mathcal{A}f_n ||_{Y_1 } \to 0 $ as $n$ goes to infinity. Since $\mathcal{K}: X \to Z$ is compact, there is  a subsequence, still denoted by $f_n$, such that $(\mathcal{K}f_n)_n$ converges in $Z . $ Hence this is a Cauchy sequence in $Z $ and, by applying (\ref{6.1}) to $f_n-f_m$, we get that $\norm{f_n-f_m}_X \to 0$, as $n, m \to \infty $; $(f_n)_n$ is a therefore Cauchy sequence in $X$ and $f_n \to f$ in $X $ as $n\to\infty$ for some $f \in X$. Since $\norm{f_n}_X = 1$ for all $n $, it follows that $\norm{f}_X = 1 $ also.   
Since $\mathcal{A } $ is continuous, we have $\mathcal{A } f_n \to \mathcal{A } f $ in $Y . $ On the other hand, $\mathcal{A } f_n \to 0 $ in $Y $ since this is true in $Y_1 . $ Hence $\mathcal{A } f = 0 $, which is a contradiction with $\mathcal{A}$ being injective.
\end{proof}   
   
In the light of (\ref{5.4}), and identifying the diagonal matrix $G $ with the vector of its diagonal elements, we will apply this lemma with the Banach spaces $X=\set{ G \in L^2(\R) ^n ,\ {\rm supp }(G) \subset \Omega _1 } $, $Y_1 = H^2(\Omega) ^n \hookrightarrow Y = H^1 (\Omega ) ^n $ and $Z= L^2( \omega \times (0,T) ) ^n $, and with the operators $\mathcal{A } $ and $\mathcal{K } $ defined by   
$$
\mathcal{A}: X\to Y,\ \mathcal{A} G =\w(\cdot,\tau _* ) |_{\Omega } ,   
$$   
and   
$$   
\mathcal{K} : X\to Z,\ \mathcal{K} G=\w | _{\, \omega \times (0,T^* ) } ,
$$
where $\w $ denotes the unique solution to (\ref{5.1}). The following lemma will imply that $\mathcal{A  } $ is well-defined; here and below, $C $ denotes a generic constant which is independent of $\tau _* . $   
   
\begin{lemma} \label{L6.2} For $G \in L^2 (\mathbb{R } )  ^n $, let $\mathcal{A }_1 G $ be the function $\w (\cdot , \tau _* ) $ on all of $\mathbb{R } . $ Then $\mathcal{A }_1 : L^2 (\mathbb{R } )^n \to H^1 (\mathbb{R } )^n $ is bounded with norm majorized by $C \max (1, \sqrt{\tau _* } ) . $   Moreover, if $G \in H^1 (\mathbb{R } )^n $, then $\mathcal{A }_1 G \in H^2 (\mathbb{R } ) ^n $ with $H^2 $-norm bounded by $C \sqrt{ \max (\tau _* , \tau _* ^{-1 } ) } \, 
|| G ||_{H^1 (\mathbb{R } ) } . $   
\end{lemma}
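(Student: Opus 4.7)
The proof rests on Duhamel's representation of the unique bounded solution to (\ref{5.1}), namely $\w(y,\tau_*) = \int_0^{\tau_*} \int_\R K_{A_1}(y,z,\tau_*-s)\, G(z)\V(z,s)\, dz\, ds$, where $K_{A_1}$ is the fundamental solution of $\partial_\tau - \LL_{A_1}$. Since $\LL_{A_1}$ is uniformly parabolic with diagonal principal part and $A_1\in C^1_b$ satisfies (\ref{1.2}), classical parabolic theory provides Gaussian bounds $|\partial_y^k K_{A_1}(y,z,t)| \leq Ct^{-(k+1)/2} e^{-c(y-z)^2/t}$ for $k=0,1,2$; these will be combined with the Gaussian bounds (\ref{eq:L4.1}) on $\V$ from Lemma \ref{L4.1}.

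For the first bound I estimate $|\w(y,\tau_*)|$ pointwise by inserting the Gaussian kernel bounds and applying Cauchy--Schwarz in $z$ to extract $\|G\|_{L^2}$. Completing the square in the product $e^{-c(y-z)^2/(\tau_*-s)}\, e^{-cz^2/s}$ pulls out a factor $e^{-cy^2/\tau_*}$ independent of $s$, leaving an integrable Gaussian in $z$ of width $\sqrt{s(\tau_*-s)/\tau_*}$. The remaining time integral is a Beta function: $\int_0^{\tau_*}(\tau_*-s)^{-1/4}s^{-1/4}\, ds = C\sqrt{\tau_*}$ for $\w$, and $\int_0^{\tau_*}(\tau_*-s)^{-3/4}s^{-1/4}\, ds = C$ (independent of $\tau_*$) for $\w_y$ after one additional factor of $(\tau_*-s)^{-1/2}$ from the kernel. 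Integrating the prefactor $e^{-2cy^2/\tau_*}$ in $y$ supplies another $\sqrt{\tau_*}$, giving $\|\w(\cdot,\tau_*)\|_{L^2} \leq C\sqrt{\tau_*}\|G\|_{L^2}$ and $\|\w_y(\cdot,\tau_*)\|_{L^2} \leq C\|G\|_{L^2}$, from which the first estimate follows.

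For the second estimate I must bound $\|\w_{yy}(\cdot,\tau_*)\|_{L^2}$. A direct Duhamel approach fails, because transferring one derivative onto $G\V$ produces $G\V_y$ with $\|G\V_y(\cdot,s)\|_{L^2} \leq Cs^{-1}\|G\|_{L^2}$, which is not integrable against the $(\tau_*-s)^{-1/2}$ weight coming from one spatial derivative on the semigroup. I therefore split the $s$-integral at $\tau_*/2$. On $(0,\tau_*/2)$ the semigroup has had time at least $\tau_*/2$ to smooth, so $\|\partial_y^2 U(\tau_*-s)\|_{L^2\to L^2} \leq C/\tau_*$; together with $\|G\V(\cdot,s)\|_{L^2} \leq Cs^{-1/2}\|G\|_{L^2}$ this contributes $C\tau_*^{-1/2}\|G\|_{L^2}$. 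On $(\tau_*/2,\tau_*)$ the spatial derivatives of $\V$ are only of size $C\tau_*^{-1}$, so $\|(G\V)_y\|_{L^2} \leq C\tau_*^{-1}\|G\|_{H^1}$; combined with the smoothing estimate $\|\partial_y^2 U(t) f\|_{L^2} \leq Ct^{-1/2}\|f\|_{H^1}$ (one integration by parts onto the $H^1$ factor) and the integrable weight $(\tau_*-s)^{-1/2}$, this second piece also yields $C\tau_*^{-1/2}\|G\|_{H^1}$. The $\sqrt{\tau_*}$ in $\sqrt{\max(\tau_*,\tau_*^{-1})}$ for large $\tau_*$ comes from the first estimate.

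\textbf{Main obstacle.} The chief technical difficulty is the time singularity of $\V(\cdot,s)$ at $s=0$, which reflects the Dirac-delta initial datum (\ref{eq:IC_v}) of the auxiliary parabolic system (\ref{eq:aux_PDE}). Because $\|G\V_y(\cdot,s)\|_{L^2}$ blows up like $s^{-1}$, Duhamel's formula for $\w_{yy}$ does not converge in $L^2$ uniformly in $\tau_*$ under a naive bound; the splitting at $\tau_*/2$, which trades one semigroup smoothing factor for one integration by parts only when $s$ is bounded below, is what makes the second estimate work and is responsible for the $\tau_*^{-1/2}$ blow-up in the $H^2$ norm bound.
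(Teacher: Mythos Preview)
Your argument for the first bound is essentially the paper's, with cosmetic differences: you use pointwise Cauchy--Schwarz in $z$ together with completing the square in the product of Gaussians, whereas the paper bounds the $L^2$--operator norm of the integral operator $K_s$ via Schur's lemma (computing $\sup_y\int|k_s|\,dz$ and $\sup_z\int|k_s|\,dy$ separately). Both lead to $\|K_s\|\le C(\tau_*(\tau_*-s))^{-1/4}$ and to $\|K_s^1\|\le C s^{-1/2}(\tau_*(\tau_*-s))^{-1/4}$ for the $y$-derivative, and hence to the same $\sqrt{\tau_*}$ and $O(1)$ bounds after integrating in $s$.

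For the $H^2$ bound your route is genuinely different. The paper differentiates the PDE for $\w$, obtaining an equation for $\w_y$ with source $G_y\V+G\V_y+[\partial_y,\LL_{A_1}]\w$, and then splits $\w_y=\w_1+\w_2+\w_3$ according to these three source terms; each piece is estimated in $H^1$ by another round of Schur-type bounds (for $\w_1,\w_2$) and an energy estimate (for $\w_3$). You instead split the Duhamel time integral at $\tau_*/2$ and use semigroup smoothing directly: full $L^2\to L^2$ smoothing of $\partial_y^2 U(\tau_*-s)$ when $\tau_*-s\ge\tau_*/2$, and a sharper $H^1\to L^2$ estimate when $\tau_*-s$ is small. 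Your approach is more streamlined and sidesteps the commutator term.

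The one point that needs more than the phrase ``one integration by parts onto the $H^1$ factor'' is your estimate $\|\partial_y^2 U(t)f\|_{L^2}\le C t^{-1/2}\|f\|_{H^1}$. For a variable-coefficient operator $\LL_{A_1}$ one does \emph{not} have $\partial_y K=-\partial_z K$, so a naive integration by parts in $z$ does not directly trade a $y$-derivative for a $z$-derivative on $f$. A correct justification is to note that $\int_{\mathbb R}K_{yy}(y,z,t)\,dz=0$ (since $U(t)$ maps constant vectors to $e^{t(B-Q)}$ times a constant, which is $y$-independent); one then writes $\partial_y^2(U(t)f)(y)=\int M(y,z,t)f'(z)\,dz$ with $M(y,z,t)=\int_z^{\infty}K_{yy}(y,w,t)\,dw$, and the cancellation yields the two-sided Gaussian bound $|M(y,z,t)|\le Ct^{-1}e^{-c(y-z)^2/t}$, from which Schur gives the claimed $t^{-1/2}$ bound. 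With this filled in, your argument is complete and gives the stated constants.
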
   
   
\noindent In particular, since $\mathcal{A } G = \mathcal{A }_1 G |_{\Omega } $ for $G \in X $, $\mathcal{A } : X \to Y $ is continuous, and $\mathcal{A } G \in Y_1 $ if $G \in X $ is $C^1 $, which is true for $G = A_1 - A_2 . $      
\medskip   
   
The somewhat technical proof of this lemma is given in Appendix C. It uses Schur's bound for the $L^2 $-norm of an integral operator. Here we just note that, because of the singular behavior of $\V $ in the right hand side of (\ref{5.1}), the boundedness of $\mathcal{A } $ in $L^2 $-norm does not follow directly from the standard energy estimates for parabolic equations, unless one would for example assume that $0 \notin {\rm supp }(G ) $,    
\begin{lemma} \label{L6.2a}
The operator $\mathcal{A } $ is injective.
\end{lemma}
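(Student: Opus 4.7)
The plan is to combine the local stability estimate of Lemma~\ref{L5.2} with a time-analyticity argument. Assuming $\mathcal{A}G=0$, the unique bounded solution $\w$ of \eqref{5.1} satisfies $\w(\cdot,\tau_*)\equiv 0$ on $\Omega$, so the first term on the right-hand side of \eqref{5.4} vanishes automatically; the task then reduces to showing that $\w$ is identically zero on $\omega\times(0,T^*)$, which will force $G=0$ via \eqref{5.4}. Because $\mathrm{supp}(G)\subset\Omega_1$ and $\omega\subset\Omega\setminus\overline{\Omega}_1$, the restriction of $\w$ to $\omega\times(0,T^*)$ satisfies the homogeneous parabolic system $(\p_\tau-\LL_{\A_1})\w=0$.

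The central step will be a time-analyticity argument. Since the coefficients of $\LL_{\A_1}$ are independent of $\tau$ and the system is uniformly parabolic with diagonal second-order part, $\LL_{\A_1}$ generates an analytic semigroup on suitable local function spaces; classical interior parabolic regularity then yields that $\tau\mapsto \w(y_0,\tau)$ extends real-analytically to $(0,T^*)$ for every $y_0\in\omega$. On the other hand, the vanishing of $\w(\cdot,\tau_*)$ on the open set $\Omega\supset\omega$ implies that all spatial derivatives of $\w(\cdot,\tau_*)$ vanish on $\omega$, so iterating the homogeneous equation gives $\p_\tau^k \w(y_0,\tau_*)=\LL_{\A_1}^k \w(y_0,\tau_*)=0$ for every $k\geq 0$ and every $y_0\in\omega$. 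Real-analyticity in $\tau$ then forces $\w(y_0,\tau)=0$ for every $\tau\in(0,T^*)$, and hence $\w\equiv 0$ on $\omega\times(0,T^*)$.

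At this point both terms on the right-hand side of \eqref{5.4} are zero, so Lemma~\ref{L5.2} yields $\|G\|_{L^2(\Omega_1)}=0$ and $G=0$; this establishes the injectivity of $\mathcal{A}$.

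The main obstacle is to justify the real-analyticity in $\tau$ at the available $C^1$ regularity of $A_1$. Since the principal part of $\LL_{\A_1}$ is diagonal and the system is coupled only at zeroth order, this reduces essentially to the scalar fact that solutions of a uniformly parabolic equation $\p_\tau u = a(y)u_{yy}+b(y)u_y+c(y)u$ with bounded, time-independent coefficients are real-analytic in $\tau>0$, a standard consequence of the analyticity of the associated semigroup.
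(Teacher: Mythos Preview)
Your proposal is correct and follows essentially the same approach as the paper: both show that all $\tau$-derivatives of $\w$ vanish on $\omega\times\{\tau_*\}$ (using that $G=0$ on $\omega$ and $\w(\cdot,\tau_*)=0$ on $\Omega$), invoke real-analyticity in $\tau$ to conclude $\w\equiv 0$ on $\omega\times(0,T^*)$, and then apply Lemma~\ref{L5.2}. The only minor difference is in how time-analyticity is justified: the paper observes that $\w=\w_{A_1}-\w_{A_2}$ is globally the difference of two solutions of homogeneous autonomous parabolic Cauchy problems and hence analytic in $\tau$, whereas you argue via the analytic semigroup generated locally by $\LL_{A_1}$ on $\omega$; both routes are standard and lead to the same conclusion.
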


\begin{proof}
Since  $\w $ is solution of \eqref{5.1}, we deduce from the identities $\w(\cdot, \tau _* ) = 0$ and $G = 0$ on $\omega \subset\Omega \backslash\Omega_1$ that $\w_\tau(\cdot,\tau _* )=0$ on $\omega$. Arguing in the same way we get that the successive derivatives of $\w $ with respect to $\tau $ vanish on $\omega \times\set{\tau _* }$. Since $\w $ is the difference of two solutions to an initial value problem with time independent coefficients,  $\w $ is analytic in $\tau $, so we necessarily have $\w= 0$ on $\omega \times(0,\tau _* )$. Therefore $G = 0 $ on $\Omega_1$ by (\ref{5.4}), and the proof is complete.
\end{proof}   
   
To prove compacticity of $\mathcal{K } $ we will use the following classical parabolic regularity estimate: cf. \cite{Evans}[\S 7.1, Thm 5]; the lemma is stated there for scalar parabolic equatons, but the proof remains valid for parabolic systems.

\begin{lemma}\label{L6.2b} Let $I\subset\R$ be a bounded interval, let $F\in L^2(I \times (0,T^* ) ) $ and let $\U \in H^{2,1}(I \times (0,T^* ) ) $ be solution to
$$
\left\{\begin{array}{lll}
\para{\p_\tau-\mathcal{L }_{A_1 } } \U = F, & y\in I,\ \tau \in (0,T^* ) \cr
\U (y, 0 ) = 0,&  y\in I,\cr
\U (y, \tau  ) = 0 & y\in\p I,\ \tau \in (0,T^* ).
\end{array}
\right.
$$
Then there exists a positive constant $C $ such that   
$$
\norm{\U }_{H^{2,1}(I \times (0,T^* ) ) } \leq C\norm{F}_{L^2(I \times (0,T^* ) ) } .
$$
\end{lemma}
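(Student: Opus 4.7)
\medskip

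The plan is to carry out the standard parabolic energy method, exploiting the fact that the principal part of $\mathcal{L}_{A_1}$ is the diagonal, uniformly elliptic matrix $A_1(y) \partial_y^2$, and that the coupling between components enters only through the bounded first- and zero-order terms $-(A_1(y)+R-Q)\partial_y$ and $(B-Q)$. In particular, the proof for the scalar heat equation in Evans extends componentwise with only harmless cross-terms that can be absorbed by Young's inequality.

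First I would derive the basic $L^2$--$H^1$ estimate: multiply the equation by $\U$ (using our convention \eqref{eq:convention_vector-norm} for vector norms), integrate on $I$, integrate by parts in $y$ using the zero Dirichlet data, and bound
\begin{equation*}
\tfrac{1}{2}\tfrac{d}{d\tau}\|\U\|_{L^2(I)}^2 + c\|\U_y\|_{L^2(I)}^2 \leq C\bigl(\|\U\|_{L^2(I)}^2 + \|F\|_{L^2(I)}^2\bigr),
\end{equation*}
where $c$ comes from the uniform lower bound $A_1 \geq \tfrac{1}{2}\sigma_{\min}^2 I_n$ from \eqref{1.2}. Gronwall's lemma then gives $\|\U\|_{L^\infty(0,T^*;L^2(I))} + \|\U_y\|_{L^2(I\times(0,T^*))} \leq C\|F\|_{L^2(I\times(0,T^*))}$, using $\U(\cdot,0)=0$.

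Next I would get control of $\U_\tau$ by multiplying the PDE by $\U_\tau$ and integrating on $I\times(0,\tau)$. The second-order term produces, after one integration by parts in $y$, the quantity $\tfrac{1}{2}\partial_\tau(A_1\U_y,\U_y)_{L^2(I)}$ (using symmetry of $A_1$, which is diagonal, and the vanishing boundary data); the first- and zero-order terms together with $F$ are handled by Cauchy--Schwarz and Young's inequality, absorbing $\tfrac{1}{2}\|\U_\tau\|^2$ into the left-hand side. Using $\U_y(\cdot,0)=0$, this yields $\|\U_\tau\|_{L^2(I\times(0,T^*))} + \|\U_y\|_{L^\infty(0,T^*;L^2(I))} \leq C\|F\|_{L^2(I\times(0,T^*))}$.

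Finally, to bound $\U_{yy}$ I would solve the PDE algebraically for the second derivative: since $A_1(y)$ is uniformly positive definite, we may write
\begin{equation*}
A_1(y)\U_{yy} = \U_\tau + \bigl(A_1(y)+R-Q\bigr)\U_y - (B-Q)\U - F,
\end{equation*}
and then bound $\|\U_{yy}\|_{L^2(I\times(0,T^*))}$ by $\sigma_{\min}^{-2}$ times the $L^2$ norm of the right-hand side, which is already controlled by the previous two steps. Summing the three contributions gives the claimed $H^{2,1}$ estimate.

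I do not anticipate a real obstacle here: the only small point to check is that, since the system is vector-valued, the integrations by parts that in the scalar case give a clean $(A_1 \U_y,\U_y)$ term still do so componentwise because $A_1$ is diagonal, and the coupling matrix $B-Q$ is simply a bounded multiplier. Hence the scalar argument cited from Evans \cite{Evans} transfers verbatim, which is why a reference suffices rather than a full proof.
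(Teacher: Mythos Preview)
Your proposal is correct and is exactly the approach the paper has in mind: the paper does not give its own proof but simply cites \cite{Evans}[\S 7.1, Thm~5] and remarks that the scalar argument there carries over to systems since the principal part is diagonal. Your three-step energy scheme (basic $L^2$--$H^1$ estimate, multiplication by $\U_\tau$, then algebraic recovery of $\U_{yy}$) is precisely that argument; the only small omission is that the integration by parts in step two also produces the cross term $\int_I (A_{1,y}\U_y,\U_\tau)$ coming from the non-divergence form of $A_1\partial_y^2$, but this is another lower-order term absorbed by Young's inequality just like the ones you already list.
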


\begin{lemma}\label{L6.3}
$\mathcal{K} : X \to Z $ is a compact operator.
\end{lemma}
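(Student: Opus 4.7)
The plan is to show that the image under $\mathcal{K}$ of any bounded sequence $(G_k)_k$ in $X$ is relatively compact in $Z$, by proving that the restrictions $\w_k|_{\omega\times(0,T^*)}$ of the corresponding solutions are uniformly bounded in the parabolic Sobolev space $H^{2,1}(\omega\times(0,T^*))^n$, and then invoking the compact embedding $H^{2,1}(\omega\times(0,T^*))\hookrightarrow L^2(\omega\times(0,T^*))$ on this bounded Lipschitz rectangle (which holds since $H^{2,1}$ continuously embeds into $H^1$ of the rectangle, and $H^1\hookrightarrow L^2$ is compact by Rellich).

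The first step is to invoke Lemma \ref{L6.2}, whose constant is uniform in $\tau_*$, at each $\tau\in(0,T^*)$ in place of $\tau_*$. This yields
\[
\norm{\w_k(\cdot,\tau)}_{H^1(\R)} \leq C \max\!\para{1,\sqrt{T^*}}\,\norm{G_k}_{L^2(\R)},
\]
with $C$ independent of $\tau$ and of $k$; integrating in $\tau$ provides a uniform bound on $\w_k$ and on $(\w_k)_y$ in $L^2(\R\times(0,T^*))^n$.

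The second step is a cutoff argument designed to eliminate the singular forcing $G\V$. Fix an intermediate open set $\omega\Subset\omega'\Subset\Omega\setminus\overline{\Omega}_1$ together with $\chi\in C_c^\infty(\omega')$ satisfying $\chi\equiv 1$ on a neighborhood of $\overline{\omega}$. Because ${\rm supp}(\chi)$ is disjoint from ${\rm supp}(G_k)\subset\overline{\Omega}_1$, the product $\U_k := \chi\w_k$ satisfies
\[
(\p_\tau - \LL_{\A_1})\U_k = [\LL_{\A_1},\chi]\w_k =: F_k, \qquad \U_k(\cdot,0)=0, \qquad \U_k = 0 \ \text{on}\ \p\omega'\times(0,T^*).
\]
The commutator $[\LL_{\A_1},\chi]$ is a differential operator of order at most one in $y$ with coefficients supported in ${\rm supp}(\chi_y)\subset\omega'$, so by the first step $F_k$ is uniformly bounded in $L^2(\omega'\times(0,T^*))^n$. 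Applying Lemma \ref{L6.2b} on the interval $\omega'$ then gives
\[
\norm{\U_k}_{H^{2,1}(\omega'\times(0,T^*))} \leq C\,\norm{F_k}_{L^2(\omega'\times(0,T^*))} \leq C'\,\norm{G_k}_{L^2(\R)},
\]
and since $\U_k\equiv\w_k$ on $\omega$, this is the desired uniform $H^{2,1}(\omega\times(0,T^*))^n$ bound on $\w_k|_{\omega\times(0,T^*)}$. Extracting a convergent subsequence in $Z$ via Rellich then completes the argument.

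The hardest point will be precisely the singular behavior of $\V(\cdot,\tau)$ as $\tau\downarrow 0$, which prevents a direct energy estimate on the full equation \eqref{5.1}. The cutoff $\chi$ is tailored to sidestep this obstacle: because $\chi G_k\equiv 0$, the singular product $G_k\V$ is removed from the equation satisfied by $\U_k$, leaving only the commutator term, which is amenable to the uniform $H^1$-in-space bound of Lemma \ref{L6.2}. The geometric bookkeeping — choosing $\omega'$ with $\omega\Subset\omega'\Subset\Omega\setminus\overline{\Omega}_1$ and $\chi$ so that Lemma \ref{L6.2b} applies with zero initial and lateral data — is routine.
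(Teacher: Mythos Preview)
Your proof is correct and follows essentially the same approach as the paper: use Lemma~\ref{L6.2} (applied at each time level) to bound $\w$ and $\w_y$ in $L^2$, cut off near $\omega$ so that the singular forcing $G\V$ disappears, apply the parabolic regularity estimate of Lemma~\ref{L6.2b} to the cut-off function, and conclude via the compact embedding $H^{2,1}\hookrightarrow L^2$. The only slip is calling $\omega'$ ``the interval'': since $\omega\Subset\Omega\setminus\overline{\Omega}_1$ has two connected components, so must $\omega'$, and Lemma~\ref{L6.2b} is stated for a single interval; the paper handles this by treating each component $\omega_j$ separately with its own interval $I_j$ and cut-off $\chi_j$, and your argument goes through verbatim once you do the same.
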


\begin{proof} Recall that $\omega = \omega _1 \cup \omega _2 $ with $\omega _1 $ and $\omega _2 $ open and disjoint. Let $I_j \subset \mathbb{R } \setminus \Omega _1 $ be an open interval such that $\overline{\omega}_j \subset I_j $, and let $\chi _j \in C^{\infty } _c (I ) $ be such that $\chi _j = 1 $ on $\omega _j $ for $j = 1, 2 . $ Then since $G = 0 $ on $I_j $, 
the function $\U _j := \chi _j \w $ satisfies the boundary value problem of lemma \ref{L6.2b} with $F = [ \chi _j , \mathcal{L }_{A_1 } ] \w . $ Hence
$$
\norm{\w }_{H^{2,1}(\omega _j \times (0,T ) ) } \leq  \norm{\U _j }_{H^{2,1}(I \times (0,T ) ) } \leq C \left( || \w ||_{L^2 (I _j \times (0, T ) ) } + ||\w _y ||_{L^2 (I _j \times (0, T ) ) } \right) .
$$
If we apply the first statement of lemma \ref{L6.2} and integrate over $\tau _* $ from 0 to $T^* $, we find that the right hand side is bounded by $C \max (\sqrt{T^* }, T^* ) ||G ||_{L^2 (\mathbb{R } ) } $, and the lemma follows from the compactness of the injection $H^1(\omega \times (0,T ) ) ^n \hookrightarrow Z=(L^2(\omega \times (0,T ) ) ^n . $
\end{proof}

Note that we needed lemma \ref{L6.2b} to get a suitable bound for the $L^2 $-norm of $\w _{\tau } $: those for $\w $ and $\w _y $ already follow from lemma \ref{L6.2}.

\begin{remark} \label{remark:L6.3} \rm{The operator $\mathcal{K } $ extends naturally to an operator $\mathcal{K } : L^2 (\mathbb{R } )^n\to Z = L^2 (\omega \times (0, T^* ) )^n . $ We claim that if $0 \notin \overline{\omega } $ then $\mathcal{K } $ is still compact, where $\omega $ may be an otherwise arbitrary bounded open subset of $\mathbb{R } . $ To prove this, we may assume that $\omega $ is connected. Let $I $ an open interval containing $\overline{\omega } $ but not containing 0, and let $\chi $ and $\U $ be as above. Since $G $ is no longer 0 on $I $, we now find that $\U $ is a solution of the system of lemma \ref{L6.2b} with right hand side $F = [ \chi , \mathcal{L }_{A _1 } ] \w + \chi G \V . $ But $\chi G \V \in L^2 (I \times (0, T ) )^n $ since $0 \notin {\rm supp }(\chi ) $ and it follows as before that $\mathcal{K } $ sends $L^2 (\mathbb{R } )^n $ into $H^{2, 1 } (\omega \times (0, T ) ) . $    
}   
\end{remark}   
   
If we finally combine Lemmas \ref{L5.2}, \ref{L6.1}, \ref{L6.2} and \ref{L6.3}, we conclude that if $G \in H^1 (\mathbb{R } ) ^n $ is supported in $\Omega $, then   
\begin{equation} \label{eq:7.1}
\norm{G}_{L^2(\Omega_1)}\leq C\norm{\mathcal{A} G}_Y=C\norm{\w(\cdot,\tau _* )}_{H^2(\Omega)} .   
\end{equation}   
This applies in particular to $G = A_1-A_2 $, which therefore proves Theorem \ref{T1.1}.

\section{Extension to non-compactly supported $G $'s   }   
   
We can extend (\ref{eq:7.1})  to non-compactly supported $G $ by allowing suitable norms of $G $ on $\mathbb{R } \setminus \Omega _1 $ on the right hand side of the inequality. An easy extension is the following:   
   
\begin{corollary}      
For any pair of bounded open intervals $\Omega _1 \Subset \Omega $ and any $\tau _*  > 0 $, there exist a constant $C > 0 $, depending on these subsets and on $\tau ^* $ as well as on $|| A_1 ||_{C^1 (\mathbb{R } ) } $ and $||A_2 ||_{C^{2, \alpha } (\mathbb{R } ) } $, such that for all $G \in H^1 (\mathbb{R } )^n $, 
\begin{equation} \label{eq:7.2}
||G ||_{L^2 (\Omega _1 ) } \leq C \left( || \w (\cdot , \tau _*  ) ||_{H^2 (\Omega ) } + || G ||_{H^1 (\mathbb{R } \setminus \Omega _1 ) } \right) .
\end{equation}   
\end{corollary}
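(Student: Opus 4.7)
The strategy is to reduce to the compactly-supported case already handled by \eqref{eq:7.1}, via a smooth cutoff. Choose a function $\chi \in C_c^\infty(\mathbb{R})$ with $\chi\equiv 1$ on a neighborhood $U$ of $\overline{\Omega}_1$ and $\operatorname{supp}(\chi) \Subset \Omega$, and pick a pair of intermediate bounded open intervals $\Omega_1' \Subset \Omega' \Subset \Omega$ with $\operatorname{supp}(\chi) \subset \Omega_1'$. Decompose
\[
G = \tilde{G} + \hat{G}, \qquad \tilde{G} := \chi\, G, \ \hat{G} := (1-\chi)\,G.
\]
Then $\tilde{G}\in H^1(\mathbb{R})^n$ is supported in $\Omega_1'$ and $\hat{G}\in H^1(\mathbb{R})^n$ is supported in $\mathbb{R}\setminus U \subset \mathbb{R}\setminus\Omega_1$; by linearity of \eqref{5.1}, the corresponding solutions $\tilde{\w}$ and $\hat{\w}$ satisfy $\w = \tilde{\w} + \hat{\w}$.

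Next I would apply the estimate \eqref{eq:7.1} to $\tilde{G}$ with the pair $(\Omega_1',\Omega')$ in place of $(\Omega_1,\Omega)$; this is legitimate since $\tilde{G} \in H^1(\mathbb{R})^n$ has support inside $\Omega_1'$. This yields
\[
\|\tilde{G}\|_{L^2(\Omega_1')} \leq C\, \|\tilde{\w}(\cdot,\tau_*)\|_{H^2(\Omega')},
\]
and because $\tilde{G} = G$ on $\Omega_1$, the left hand side controls $\|G\|_{L^2(\Omega_1)}$. Writing $\tilde{\w} = \w - \hat{\w}$ and using $\Omega' \Subset \Omega$, the right hand side is bounded by
\[
C\bigl(\|\w(\cdot,\tau_*)\|_{H^2(\Omega)} + \|\hat{\w}(\cdot,\tau_*)\|_{H^2(\mathbb{R})}\bigr).
\]

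To handle the $\hat{\w}$-term I would invoke the second statement of Lemma \ref{L6.2}, which gives
\[
\|\hat{\w}(\cdot,\tau_*)\|_{H^2(\mathbb{R})} \leq C\sqrt{\max(\tau_*,\tau_*^{-1})}\, \|\hat{G}\|_{H^1(\mathbb{R})}.
\]
The final step is a Leibniz computation: since $\hat{G} = (1-\chi)G$, one has $\hat{G}_y = -\chi' G + (1-\chi)G_y$, and both $\operatorname{supp}(\chi')$ and $\operatorname{supp}(1-\chi)$ sit in $\mathbb{R}\setminus\Omega_1$, so
\[
\|\hat{G}\|_{H^1(\mathbb{R})} \leq C_\chi\, \|G\|_{H^1(\mathbb{R}\setminus\Omega_1)}.
\]
Chaining the three inequalities yields \eqref{eq:7.2}. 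I do not anticipate a serious obstacle here: the only delicate point is arranging the nested intervals $\Omega_1 \Subset \Omega_1' \Subset \Omega' \Subset \Omega$ so that \eqref{eq:7.1} applies on the intermediate pair while leaving enough room to absorb the cutoff derivatives on the complement of $\Omega_1$. The constant in \eqref{eq:7.2} then acquires the stated dependence on $\|A_1\|_{C^1}$, $\|A_2\|_{C^{2,\alpha}}$, the intervals, and $\tau_*$ through those of \eqref{eq:7.1} and Lemma \ref{L6.2}.
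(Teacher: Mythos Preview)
Your proposal is correct and follows essentially the same route as the paper: decompose $G$ via a cutoff into a compactly supported piece and a remainder, apply \eqref{eq:7.1} to the former, and control the latter via the second part of Lemma~\ref{L6.2} together with the Leibniz estimate for $(1-\chi)G$. Your explicit introduction of the intermediate pair $\Omega_1' \Subset \Omega'$ is in fact a bit more careful than the paper's presentation, since \eqref{eq:7.1} as derived requires the support of the argument to lie in the inner interval.
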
   
   
\begin{proof} Let $\chi \in C^{\infty } _c (\Omega ) $ such that $\chi (y) = 1 $ on $\Omega _1 . $ Write $G = G_1 + G_2 $ with $G_1 := \chi G $ and $G_2 := (1 - \chi ) G . $ Let $\w _j $, $j = 1, 2 $, be the solution of the boundary value problem (\ref{5.1}) with right hand side $G_j \V $ and $\w $ the one with right hand side $G \V . $ Then $\w = \w _1 + \w _2 $, by uniqueness, and by (\ref{eq:7.1}) above,   
$$
|| G_1 ||_{L^2 (\Omega _1 ) } \leq C || \w_1 (\cdot , \tau _*  ) ||_{H^1 (\Omega ) } \leq C \left( || \w ( \cdot , \tau _* ) ||_{H^1 (\Omega ) } + || \w _2 (\cdot , \tau _* ) || _{H^2 (\Omega ) } \right) ,
$$
while by the second statement of lemma \ref{L6.2}, $|| \w _2 (\cdot , \tau _* ) ||_{H^1 (\mathbb{R } ) } \leq C || G_2 ||_{H^1 (\mathbb{R } ) } \leq C || G ||_{H^1 (\mathbb{R } \setminus \Omega _1 ) } . $ Inequality (\ref{eq:7.2}) follows.
\end{proof}
\medskip
   
Going back to our original problem, the corollary implies that if we take an $\varepsilon > 0 $ and $I \Subset J $ and if $\Sigma _1 - \Sigma _2  $ has a sufficiently small $H^1 $-norm on the complement of $I $, 
then we can achieve "stability up to $\varepsilon $":
\begin{equation} \label{eq:7.4}
\norm{\Sigma_1 - \Sigma_2}_{L^2(I) } ^2 \leq C \sum _{i = 1 } ^n \ \norm{C^*_{\Sigma_1 } (\cdot , \epsilon _i , T ) - C^*_{\Sigma_2 } (\cdot , \epsilon _i , T ) }_{H^2(J ) } ^2 + \varepsilon .
\end{equation}   
If we would be content to work within a certain numerical precision $\varepsilon $, it would therefore not be necessary that $\Sigma _1 = \Sigma _2 $ outside of $I $, only that it has a sufficiently small $H^1 $-norm there.
\medskip

We can improve the corollary by re-examining the proof of (\ref{eq:7.1}).

\begin{theorem} \label{thm:7.1}   
Let $\Omega _1 \Subset \Omega $ be two bounded intervals and let $\omega \Subset \Omega \setminus \Omega _1 $ be an open subset with $0 \notin \omega $, such that $\omega $ intersects each of the two components of $\Omega \setminus \Omega _1 $ in an open interval. Finally, let $\tau _*  > 0 . $ Then   
there exists a constant $C $ such that   
\begin{equation} \label{eq:thm7.1}
|| G ||_{L^2 (\Omega _1 ) } \leq C \left( || \w (\cdot , \tau _*  ) ||_{H^2 (\Omega ) } + || G ||_{H^1 (\omega ) } + || G ||_{L^2 (\mathbb{R } \setminus \Omega _1 ) } \right) .   
\end{equation}
\end{theorem}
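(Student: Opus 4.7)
The plan is to extend the compactness mechanism of Theorem \ref{T1.1} from $G$ supported in $\Omega_1$ to general $G\in H^1(\mathbb{R})^n$, by inserting Corollary \ref{cor:L5.2} into a contradiction argument modelled on the proof of Lemma \ref{L6.1}. Suppose \eqref{eq:thm7.1} fails; then for each $n\in\mathbb{N}$ one finds $G_n\in H^1(\mathbb{R})^n$ violating the bound with constant $n$, and after normalisation $\|G_n\|_{L^2(\Omega_1)}=1$ the three quantities $\|\w_n(\cdot,\tau_*)\|_{H^2(\Omega)}$, $\|G_n\|_{H^1(\omega)}$ and $\|G_n\|_{L^2(\mathbb{R}\setminus\Omega_1)}$ (where $\w_n$ is the solution of \eqref{5.1} with source $G_n\V$) all tend to $0$ as $n\to\infty$.

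Feeding $(G_n,\w_n)$ into Corollary \ref{cor:L5.2} --- whose hypotheses on $(\Omega_1,\omega,\Omega)$ are exactly those of the present theorem --- and using that two of the three terms on its right-hand side vanish in the limit, one reads off a positive lower bound $\liminf_n\|\w_n\|_{L^2(\omega\times(0,T^*))}\geq c>0$. Since $\|G_n\|_{L^2(\mathbb{R})}^2=1+o(1)$ is uniformly bounded, a subsequence converges weakly in $L^2(\mathbb{R})^n$ to some $G$, and the strong $L^2$-convergence of $G_n$ to $0$ on $\mathbb{R}\setminus\Omega_1$ then forces $\mathrm{supp}(G)\subset\overline{\Omega}_1$.

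The contradiction comes from passing to the limit on both sides. Because $0\notin\overline{\omega}$, Remark \ref{remark:L6.3} provides a compact operator $\mathcal{K}:L^2(\mathbb{R})^n\to L^2(\omega\times(0,T^*))^n$, $H\mapsto\w_H|_{\omega\times(0,T^*)}$; compactness turns the weakly convergent subsequence into a strongly convergent one, so $\mathcal{K}G_n\to\mathcal{K}G$ in $L^2(\omega\times(0,T^*))^n$, and the lower bound propagates to $\|\mathcal{K}G\|_{L^2(\omega\times(0,T^*))}>0$, in particular $G\not\equiv 0$. Simultaneously, Lemma \ref{L6.2} ensures that $\mathcal{A}:L^2(\mathbb{R})^n\to H^1(\Omega)^n$, $H\mapsto\w_H(\cdot,\tau_*)|_\Omega$, is bounded, so $\mathcal{A}G_n\rightharpoonup\mathcal{A}G$ weakly in $H^1(\Omega)^n$; the hypothesis $\mathcal{A}G_n\to 0$ in $H^2(\Omega)^n\hookrightarrow H^1(\Omega)^n$ then forces $\mathcal{A}G=0$. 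Finally, $G$ is supported in $\overline{\Omega}_1$ and hence vanishes on $\omega$, so the injectivity proof of Lemma \ref{L6.2a} applies verbatim and yields $G=0$, contradicting $\mathcal{K}G\neq 0$.

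The principal obstacle I anticipate is the regularity mismatch between the strong $H^2$-convergence assumed on the $\w_n$-side and the mere $L^2$-boundedness of $G_n$: one cannot hope for $G_n\to G$ strongly in $L^2(\mathbb{R})^n$. This is bypassed by realising that only \emph{weak} compatibility between the two sides is required --- boundedness of $\mathcal{A}:L^2\to H^1$ transfers $L^2$-weak convergence of $G_n$ to $H^1$-weak convergence of $\mathcal{A}G_n$, while the given strong $H^2$-convergence to $0$ already fixes the limit --- whereas the Rellich-type compactness of $\mathcal{K}$ on all of $L^2(\mathbb{R})^n$ (which is precisely where the hypothesis $0\notin\overline{\omega}$ is used) does the heavy lifting on the other side. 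A secondary point is that Lemma \ref{L6.2a}, although stated for $G$ supported in $\Omega_1$, only uses $G\equiv 0$ on $\omega$ (guaranteed by the support constraint on the weak limit) together with the $\tau$-analyticity of $\w$ (a feature of the time-independent coefficients in \eqref{5.1}), and hence applies in the present wider setting.
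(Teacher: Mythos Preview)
Your proof is correct and uses the same ingredients as the paper --- Corollary~\ref{cor:L5.2}, the compactness of $\mathcal{K}$ from Remark~\ref{remark:L6.3}, the boundedness of $\mathcal{A}$ from Lemma~\ref{L6.2}, and the injectivity of Lemma~\ref{L6.2a} --- arranged into a contradiction argument. The route differs in one technical respect: the paper uses the compactness of $\mathcal{K}$ to make $(\w_n)_n$ converge in $L^2(\omega\times(0,T^*))$ and then feeds $G_n - G_m$ into Corollary~\ref{cor:L5.2} to conclude that $(G_n|_{\Omega_1})_n$ is \emph{Cauchy}, hence strongly convergent in $L^2(\Omega_1)$; extending the limit by zero and using $\|G_n\|_{L^2(\mathbb{R}\setminus\Omega_1)}\to 0$ gives strong $L^2(\mathbb{R})$ convergence $G_n\to G$ with $\|G\|_{L^2}=1$, after which continuity of $\mathcal{A}$ immediately yields $\mathcal{A}G=0$. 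You instead extract only a weak limit $G$, use the compactness of $\mathcal{K}$ to deduce $\mathcal{K}G\neq 0$ (hence $G\neq 0$), and identify $\mathcal{A}G=0$ via weak-to-weak continuity. Both arguments are sound; the paper's version yields the slightly sharper information $\|G\|_{L^2}=1$ at the cost of the extra Cauchy step, while yours is marginally more streamlined but relies on the weak continuity observation you flag as the ``principal obstacle'' (which, as you correctly note, is no obstacle at all).
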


\begin{proof} By contradiction: suppose (\ref{eq:thm7.1}) does not hold. Then there exists a sequence $(G_n )_n $ such that $|| G_n ||_{L^2 (\Omega _1 ) } = 1 $ and
such that
$$
||\w_n (\cdot , \tau _*  ) ||_{H^2 (\Omega ) } , || G_n ||_{H^1 (\omega ) } ,|| G_n ||_{L^2 (\mathbb{R } \setminus \Omega _1 ) } \to 0 , \ \ n \to \infty ,
$$
where $w_n $ is the solution of (\ref{5.1}) with right hand side $G_n \V . $ By corollary \ref{cor:L5.2},
\begin{equation} \label{eq:7.5}
|| G_n ||_{L^2 (\Omega _1 ) } \leq C \left( ||\w_n (\cdot , \tau _*  ) ||_{H^2 (\Omega ) } + ||\w_n ||_{L^2 (\omega \times (0, T ) ) } + || G_n ||_{H^1 (\omega ) } \right) .   \end{equation}
The sequence $(G_n )_n $ is clearly norm-bounded in $L^ 2 (\mathbb{R } ) $, so by remark \ref{remark:L6.3} there exists a subsequence, still denoted by $(G_n )_n $ such that $(\w _n )_n $ converges in $L^2 (\omega \times (0, \tau _*  ) ) . $ It then follows from (\ref{eq:7.5}) that $G_n | _{\Omega _1 } $ is a Cauchy sequence in $L^2 (\Omega _1 ) ^n $, which therefore converges to a function $G $ on $\Omega _1 . $ Extending $G $ by 0 on the complement of $\Omega _1 $ and recalling that  $|| G_n ||_{L^2 (\mathbb{R } \setminus \Omega _1 ) } \to 0 $, we see that $G_n \to G $ in $L^2 (\mathbb{R } )^n . $ Clearly, $||G ||_{L^2 (\mathbb{R } ) } = 1 . $ If $\w $ is the solution of (\ref{5.1}) with right hand side $G \V $, then by lemma \ref{L6.2}, $\w_n (\cdot , \tau _*  ) \to \w (\cdot , \tau _*  ) $ in $H^1 . $ Hence $w(\cdot , \tau _*  ) = 0 $ on $\Omega _1 $, while ${\rm supp } (G ) \subset \Omega _1 $ and $||G ||_{L^2 (\Omega _1 ) } = 1 . $ But this gives a contradiction with lemma \ref{L6.2a}.

\end{proof}   

We can always choose $\omega $ such that $0 \notin \omega . $ Theorem \ref{thm:1.2} is an immediate corollary of theorem \ref{thm:7.1}, on using the trivial bound $|| G ||_{H^1 (\Omega ) } \leq || G ||_{H^1 (\Omega _1 ) } . $ One can slightly strengthen (\ref{eq:thm1.2a}) by replacing the $H^1 $-norm of $\Sigma _1 - \Sigma _2 $ on $J \setminus I $ by one on a compactly contained open subset which intersects each component of $J \setminus I $, but we opted for the simpler formulation.   
\medskip   
   
\appendix   
   
\section{Proof of theorem \ref{thm:RS_Dupire} }

Let $E(K, T ; S , t ) = \left( e_{ij } (K, T ; S , t ) \right)_{i, j } $ be the fundamental solution of the system (\ref{eq:RSLV_BSE6a}):   
\begin{equation} \label{eq:FS_BW}
\partial _t e_{ij }  + \frac{1 }{2 } \sigma _j (S, t )^2 S^2 \partial _S ^2 e_{ij }  + (r_j - q_j ) S \partial _S e_{ij } + \sum _k b_{kj } e_{ik } = r_j e_{ij } , 
\ \ t \leq T ,
\end{equation}   
with final boundary condition $e_{ij } (K, T ; S, T ) = \delta (S - K ) \delta _{ij } . $ The existence of such a fundamental solution follows from standard results for parabolic systems after a logarithmic change of variables $S = e^x $: see \cite{Eidelman}, \cite{F} and also Appendix B below. It is known that the $e_{ij } $ are $C^2 $ in $S $ and $K $, and $C^1 $ in $t $ and $T $, for $t < T . $    
We can express the generalized call prices in terms of this fundamental solution as   
\begin{equation} \label{eq:call_price_repr}
\underline{C } (K , T , \pi ; S , t ) = \int _{\mathbb{R }_{> 0 } } \,  \underline{\pi }^t E (y, T ; S , t ) \max (y - K , 0 ) \, dy ,
\end{equation}
or, component-wise,   
$$
C_j (K, \pi , T ; S , t ) = \int _K ^{\infty } \left( \sum _i \pi _i e_{i j } (y , T ; S , t ) \right) (y - K ) \, dy, \ \ j = 1 , \dots , n ,   
$$   
where the integral converges absolutely.    
Differentiating twice with respect to $K $ yields the extension to regime-switching models of the well-known Breeden-Litzenberger formula,
\begin{equation} \label{eq:BL}
\partial _K ^2 C_j (K , T , \pi , S , t ) = \sum _i \pi _i e_{ij } (K , T ; S , t ) ,
\end{equation}
or, in vector notation,  $\partial _K ^2 \underline{C } (K, \pi , T ; S , t ) = \underline{\pi }^t E(K, T ; S , t ) . $   
\medskip   

Under our assumption of market completeness we dispose of call options with $\pi = \epsilon _i $ for $i = 1 , \ldots , n $, and the Breedenberg-Litzenberg relation (\ref{eq:BL}) then allows, at a time $t^* $, to recover the $j $-th column of $P (K , T ; S_{t^* } , t^* ) $, where $e_j = X_{t^* } . $
\medskip

In matrix notation, (\ref{eq:FS_BW}) reads   
\begin{equation} \label{eq:FS2}
\partial _t + S^2 (\partial _S ^2 E) \Sigma + S (\partial _S E ) (R - Q ) + E (B - R ) = 0 , \ \ t < T .
\end{equation}
The usual argument for deriving a PDE in the forward variables $(K, T ) $ for $E $ also apply to the system case: we recall the derivation for convenience of the reader while paying attention to the regularity needed for $\Sigma . $ Let the row-vector $\underline{V } (S, t ) $ be a solution of (\ref{eq:RSLV_BSE2}) with arbitrary final value $\underline{F } \in C^{\infty } _c \left( \mathbb{R }_{> 0 } \right) ^n . $ If $t < u < T $, then, by uniqueness of solution,   
$$
\underline{V }(S, t ) = \int _{\mathbb{R }_{> 0 } } \, \underline{V }(y, u ) E (y, u ; S , t ) \, dy .
$$   
If we differentiate this relation with respect to $u $, use (\ref{eq:RSLV_BSE2}) for $\underline{V } $ and, assuming momentarily that $\Sigma ^2 $ is $C^2 $, integrate by parts twice, we find that
$$
0 = \int _{\mathbb{R }_{> 0 } } \underline{V }(y, u ) ^t \left( \partial _u E - \frac{1 }{2 } \partial _y ^2 \left( y^2 \Sigma ^2 E \right) + S \partial _y (y (R - Q ) E ) + (R - B ) E \right) dy ,
$$
where $E = E (y , u ; S, t ) . $ Letting $u \to T $ and using that $\underline{F } $ is arbitrary, we find the forward equation   
\begin{equation} \label{FS_FW}
\partial _T E - \frac{1 }{2 } \partial _y ^2 \left( y^2 \Sigma (y, T ) ^2 E \right) + (R - Q ) \partial _y (yE ) + (R - B ) E = 0 ,
\end{equation}   
for $E (y, T ; S , t ) $ or, component wise,   
$$
\partial _T e_{ij } - \frac{1 }{2 } \partial _y ^2 (y^2 \sigma _i (y, T )^2 e_{ij } ) + (r_i - q_i ) \partial _y (y e_{ij } ) + r_i e_{ij } - \sum _k b_{ik } e_{kj } = 0 , \ \ i, j = 1 , \dots , n .
$$
If $\Sigma ^2 $ is only $C^1 $, we can only integrate by parts once, but we still find that $E $ satisfies the forward equation in the weak sense:   
\begin{equation} \label{eq:FS_FW_weak}   
0 = \int _0 ^{\infty } \underline{F }(y) \left( \partial _T E  + (R - Q ) \partial _y (yE ) + (R - B ) E \right) dy + \frac{1 }{2 } \int _0 ^{\infty } (\partial _y \underline{F } ) \, \partial_y \left( y^2 \Sigma ^2 E \right) dy ,   
\end{equation}   
where, in view of the Gaussian estimates for $E (K, T ; S , t ) $ (see Appendix B) we can, by a density argument, take $\underline{F } $ to be any function in $H^1 _{\rm loc } (\mathbb{R } ) ^n $ which together with its derivative is of polynomial growth.   
\medskip   

\noindent We next use this to derive a Dupire-type system of PDEs for the column-vector of call prices      
\begin{equation} \nonumber   
\left( C_{1j } (K, T ; S , t ) , \ldots , C_{nj } (K, T ; S , t ) \right) = \left( C_j (K, \epsilon _1 , T ; S, e_j , t ) , \ldots C_j (K , \epsilon _n , T ; S, e_j , t ) \right) ,
\end{equation}
where we recall that $\epsilon _i (e_j ) = \delta _{ij } . $ By (\ref{eq:call_price_repr}) and (\ref{eq:FS_FW_weak}) with $\underline{F } (y) := \left( \max (y - K , 0 ) \delta _{ik } \right) _{1 \leq k \leq n } $,   
\begin{eqnarray*}   
&&\partial _T C_j (K, \epsilon _i , T ) = \int _{\mathbb{R }_{> 0 } } \, \partial_T e_{ij } (y, T ; S , t ) \max (y - K , 0 ) \, dy \\
&=& \int _{\mathbb{R }_{> 0 } } \left( - \frac{1 }{2 } \partial _y (y^2 \sigma _i ^2 e_{ij } ) + (r_i - q_i ) y e_{ij } \right)\, H(y - K ) dy - \int _{\mathbb{R }_{> 0 } }  \left(     
r_i e_{ij } - \sum _k b_{ik } e_{kj } \right) \max (y - K , 0 ) \, dy \\   
&=& \frac{1 }{2 } K^2 \sigma _i (K , T )^2 e_{ij } (K, T ) + (r_i - q_i ) \int _{\mathbb{R }_{> 0 } } y e_{ij } H(y - K ) dy - r_i C_j (K , \epsilon _i , T ) \\
&& + \sum _k b_{ik } C_j (K , \epsilon _k , T ) ,   
\end{eqnarray*}   
where $H(x) $ is the Heaviside function and where we used (\ref{eq:call_price_repr}) again for the last two terms. If we now for the first term we use the Breeden-Litzenberger relation (\ref{eq:BL}), and rewrite the second term as   
\begin{eqnarray*}
\int _{\mathbb{R }_{> 0 } } \, y \, e_{ij } (y, T ) \, H(y - K ) \, dy &=& \int _{\mathbb{R }_{> 0 } } \, e_{ij } (y, T ) \, \max (y - K , 0 ) \, dy + K \int _0 ^{\infty } \, e_{ij } (y, T ) \, H(y - K ) \, dy \\   
&=& C_j (K , \epsilon _i , T ) - K \partial _K C_j (K , \epsilon _i , T ) ,   
\end{eqnarray*}
we find the $(i, j ) $-th component of equation (\ref{eq:RS_Dupire1}), thereby proving theorem \ref{thm:RS_Dupire}.

\section{Proof of Lemma \ref{L4.1}}
We examine the non-vanishing of the matrix coefficients of the fundamental solution of a strictly parabolic Cauchy problem:
\begin{equation}\label{A.1}
\begin{array}{lll}
L(\tau,y;\p)\U(y,\tau):=\displaystyle \para{\p_\tau-\sum_{k=0}^2 \mathbb{A }_k(y,\tau)\p_y^k}\U(y,\tau)=0 & 0<\tau<T,\,y\in\R\cr
\U(y,0)=\U_0(y) & y\in\R ,
\end{array}
\end{equation}
where $\U(y,\tau)={}^t(u_1(y,\tau),\dots,u_d(y,\tau))$ and $\U_0(y)={}^t(u_{0,1}(y),\dots,u_{0,d}(y))$ are $d$-dimensional column vectors and where   
$$
\mathbb{A }_k(y,\tau)=(a^k_{ij}(y,\tau))_{1\leq i,j\leq d},\quad k=0,1,2
$$
are $d \times d $ matrix-valued functions, which are uniformly continuous in $(y,\tau)$ and H\"older continuous with exponent $\alpha > 0 $ in $y $, with $\mathbb{A } _2 (y , \tau ) $ strictly positive definite on $\mathbb{R } . $ We use blackboard bold letters for the coefficients to avoid confusion with the $A_k $ of sections 2 to 5. By Eidel'man \cite{Eidelman}, \cite{F} there exists a unique fundamental solution $E ( y, \tau ; y_0 , s )   =(e_{i,j } ( y, \tau ; y_0 , s ) )_{1\leq i,j\leq d } $ characterised by
\begin{equation}
\begin{array}{lll}
L(\tau,y,\p)E:=\displaystyle\para{ \p_\tau -\sum_{k=0}^2\mathbb{A }_k(y,\tau)\p_y^k}E( y, \tau ; y_0 , s ) =0, & 0\leq s <\tau<T,\, y, y_0 \in\R\cr
\displaystyle\lim_{\tau\to s^+}E( y, \tau ; y_0 , s )=\delta(y- y_0  )I_{d\times d}, & y, y_0 \in\R
\end{array}
\end{equation}
with $I_{d\times d} $ the $d\times d$ unit matrix, and whose matrix elements satisfy the Gaussian estimates   
$$
|\partial _y ^k e_{ij} (y, \tau ; y_0 , s ) | \leq \frac{C }{(\tau-s )^{(k + 1 )/2} } \exp\para{- c \frac{|y- y_0 |^2}{\tau-s} } , \ k = 0, 1, 2 ,
$$
and
$$
| \partial _{\tau } e_{ij} ( y, \tau ; y_0 , s ) | \leq \frac{C }{(\tau-s)^{3/2 } } \exp\para{- c \displaystyle\frac{|y - y_0 |^2}{\tau -s } } .
$$
for suitable constants $c , C > 0 . $ Otsuka \cite{Otsuka} determined necessary and sufficient conditions for the matrix elements of the fundamental solution to be non-negative:

\begin{theorem} (Otsuka \cite{Otsuka}) The fundamental solution of a strictly parabolic second order system (\ref{A.1}) has the positivity property
$$
e_{ij}( y, \tau ; y_0 , s ) \geq 0, \quad 0\leq s <\tau<T,\,x,y\in\R,\quad i,j=1,\dots,d.
$$
if and only if
\begin{enumerate}

\item [1. ] The first and second order coefficients $\mathbb{A }_k(y, \tau ) $ ($k = 1, 2 $) are diagonal.   

\item [2. ] For all $i\neq j$, $a^0_{ij}(y,\tau)\geq 0 . $

\end{enumerate}

\end{theorem}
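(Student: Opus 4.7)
The plan is to prove the two implications separately: sufficiency via a Picard iteration that decouples the system and inherits positivity from the scalar parabolic maximum principle, and necessity by a direct local positivity test for the zeroth-order condition followed by a short-time parametrix analysis for the harder diagonality condition on $\mathbb{A}_1$ and $\mathbb{A}_2$. The parametrix step is where I expect the real difficulty to lie.

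For sufficiency, assume $\mathbb{A}_1,\mathbb{A}_2$ are diagonal and $a^0_{ij}(y,\tau)\geq 0$ for all $i\neq j$. I would fix $y_0\in\R$ and $s\in[0,T)$, take a non-negative mollifier $\delta_\varepsilon$ of $\delta(\cdot-y_0)$, and construct the $j$-th column of $E$ as the limit of Picard iterates defined by $\U^{(0)}\equiv 0$ and
\begin{equation*}
(\partial_\tau - a^2_{ii}\partial_y^2 - a^1_{ii}\partial_y - a^0_{ii})u_i^{(k+1)} \;=\; \sum_{l\neq i} a^0_{il}\,u_l^{(k)},\qquad u_i^{(k+1)}(y,s)=\delta_\varepsilon(y-y_0)\,\delta_{ij}.
\end{equation*}
Since the coupling is through non-negative coefficients, an induction plus the scalar maximum principle (applied after a uniform exponential shift making $a^0_{ii}$ non-positive) gives $u_i^{(k+1)}\geq 0$. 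Standard contraction estimates yield convergence on short time slabs, which one can chain up to $T$, and the Eidel'man Gaussian bounds justify the passage to the limit $\varepsilon\to 0^+$, producing $e_{ij}(\cdot,\cdot;y_0,s)\geq 0$.

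For necessity of condition~(2), assume all entries of $E$ are non-negative, fix $i\neq j$ and $(y_*,\tau_*)\in\R\times(0,T)$, and choose $\varphi\in C^\infty_c(\R)$ with $\varphi\geq 0$, $\varphi(y_*)>0$ and $\varphi'(y_*)=\varphi''(y_*)=0$. Running (A.1) from time $\tau_*$ with initial data $\varphi\,e_j$ produces a solution whose $i$-th component $u_i(y,\tau)=\int_{\R}e_{ij}(y,\tau;y_0,\tau_*)\varphi(y_0)\,dy_0$ is non-negative for $\tau\geq\tau_*$ and vanishes identically at $\tau=\tau_*$, so $\partial_\tau u_i(y_*,\tau_*^+)\geq 0$. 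Computing $\partial_\tau u_i$ from the PDE at $\tau=\tau_*$ collapses, thanks to the choice of $\varphi$, to the bound $a^0_{ij}(y_*,\tau_*)\,\varphi(y_*)\geq 0$, hence $a^0_{ij}(y_*,\tau_*)\geq 0$.

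The remaining and hardest task is the diagonality of $\mathbb{A}_1$ and $\mathbb{A}_2$. My plan is to freeze the coefficients at $(y_*,\tau_*)$ and invoke the standard parametrix expansion, whose leading term as $\sigma\downarrow 0$ coincides with the constant-coefficient fundamental matrix $E_0(y-y_0,\sigma)$ of $\partial_\sigma-\mathbb{A}_2(y_*,\tau_*)\partial_y^2$. Writing the symmetric positive-definite matrix $\mathbb{A}_2(y_*,\tau_*)=VDV^T$ with $D=\mathrm{diag}(d_l)$, the entries of $E_0$ take the form $\sum_l V_{il}V_{jl}\,k_l(y-y_0,\sigma)$, with $k_l$ the Gaussian of variance $2d_l\sigma$. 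The non-negativity of every entry of $E$, carried down to the leading order of the expansion (the error being of strictly smaller order in $\sigma$), forces $\sum_l V_{il}V_{jl}\,k_l\geq 0$ for all $(y-y_0,\sigma)$; the distinct Gaussian decay rates as $|y-y_0|\to\infty$ make the $k_l$ linearly independent in the strong sense that positivity of such a linear combination forces every coefficient to be non-negative, so $V_{il}V_{jl}\geq 0$ for each $l$. Combined with the orthogonality identity $\sum_l V_{il}V_{jl}=\delta_{ij}$, this forces $V$ to be a signed permutation matrix and hence $\mathbb{A}_2(y_*,\tau_*)$ to be diagonal. Pushing the parametrix one order further, $\mathbb{A}_1$ first appears through a Gaussian-times-polynomial correction, and the same linear-independence argument yields the diagonality of $\mathbb{A}_1(y_*,\tau_*)$. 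Since $(y_*,\tau_*)$ was arbitrary, condition~(1) follows.
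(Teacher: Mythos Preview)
The paper does not prove this theorem: it is quoted as Otsuka's result and simply cited, then used as input for the paper's own contribution in Appendix~B, namely the strict Gaussian lower bound for the fundamental solution. That subsequent proof is built on the Volterra series $E=E_{\rm diag}+\sum_{p\ge1}\int E_{\rm diag}\,\Phi_p$, which is exactly the Picard iteration you propose for the sufficiency direction; so your sufficiency argument is correct and in the same spirit as the only machinery the paper develops.

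For necessity the paper offers nothing to compare against. Your test-function argument for condition~(2) is correct. For condition~(1) your parametrix route is plausible but has two soft spots. First, writing $\mathbb{A}_2(y_*,\tau_*)=VDV^{T}$ presupposes symmetry of $\mathbb{A}_2$, which is not part of the stated hypotheses (only strict positive-definiteness is). Second, the step ``distinct Gaussian decay rates force each $V_{il}V_{jl}\ge0$'' fails when eigenvalues repeat; this is repairable by arguing with the spectral projections $P_d$ instead (entrywise non-negativity of each $P_d$ together with $\sum_d P_d=I$ forces all $P_d$, hence $\mathbb{A}_2$, to be diagonal), but it should be said. A cleaner route avoids the parametrix entirely: the same computation you did for~(2) gives, for every non-negative test function $\varphi$ and every $y$,
\[
a^2_{ij}(y,\tau_*)\,\varphi''(y)+a^1_{ij}(y,\tau_*)\,\varphi'(y)+a^0_{ij}(y,\tau_*)\,\varphi(y)\ \ge\ 0\qquad(i\neq j).
\]
Choosing $\varphi$ so that $\varphi''(y_*)/\varphi(y_*)\to\pm\infty$ (e.g.\ $\varphi=\cos^2((y-y_*)/\delta)$ versus $\varphi=\epsilon^2+(y-y_*)^2$) yields $a^2_{ij}=0$; then choosing $\varphi$ with $\varphi'(y_*)/\varphi(y_*)\to\pm\infty$ (e.g.\ $\varphi=(\epsilon\pm(y-y_*))^2$) yields $a^1_{ij}=0$. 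This handles the non-symmetric case and the repeated-eigenvalue case in one stroke, and makes the vague ``push the parametrix one order further'' step for $\mathbb{A}_1$ unnecessary.
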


We next examine strict positivity of the matrix coefficients of $E (y, \tau ; y_0, s_0 ) $, and will therefore assume conditions 1 and 2 to hold. We first decompose the operator $L$ into a diagonal part, $L_{\rm diag } $ and a 0-th order non-diagonal part, $\mathbb{B } $,   
$$
L = L_{\rm diag } + \mathbb{B } .   
$$
In particular, $\mathbb{A }_0 - \mathbb{B } $ is diagonal, so that if $\mathbb{B }(y, \tau ) = \left( (b_{ij}(y, \tau ) \right) _{i ,j } $ then $b_{ij}(y,\tau )=a^0_{ij}(y, \tau ) $ for $i \neq j $; one can, but does not need to, take $b_{ii}(y,\tau ) = 0 . $ Let
$$
E_{\rm diag } (y, \tau ;  y_0 , s  )=\textrm{Diag}(e_1 (y , \tau ;  y_0  , s ), \dots ,e_d (y, \tau ; y_0 , s ) )
$$
be the fundamental solution of $L_{\rm diag } . $ It is known that the $e_i $ satisfy a Gaussian lower bound: there exist positive constants $\delta_0$ and $\epsilon_0 $ such that 
\begin{equation}\label{e_j-positive}
e_i (y  , \tau ; y_0 , s ) \geq \delta_0 (\tau - s)^{-1/2}\exp\para{-\epsilon_0\frac{(y - y_0 ) ^2}{\tau-s}} :   
\end{equation}
see for example \cite{Aronson}. Recall that, by assumption, $b_{ij } (y, \tau ) \geq 0 $ for all $i $ and $j $, and let $B_* = (b_{*, ij } )_{i, j } $ be the matrix of the coefficient-wise infinimae of $B(y, \tau ) $:
$$
b_{*, ij } := \inf_{y  \in \mathbb{R }, \tau > 0 } b_{ij } (y, \tau ) .
$$
The following lower bound can be considered to be a sharpening of the sufficient part of Otsuka's theorem above. If we apply it to the system (\ref{eq:aux_PDE}) while taking $\mathbb{B } = B $, it immediately implies (\ref{5.3}), thereby completing the proof of this lemma.

\begin{theorem} \label{thm:LB_Phi} There exists constants $c, C > 0 $ such that for all $i, j $,
\begin{equation} \label{eq:LB_Phi}   
e_{ij } (y, \tau ; y_0 , s ) \geq C \left( e^{c (\tau - s ) B_* } \right) _{ij } \frac{e^{ - \epsilon _0 (y - z )^2 / (\tau - s ) } }{\sqrt{\tau - s } } ,
\end{equation}   
where we can in fact take $c = \delta _0 (\epsilon _0 ^{-1 } \pi )^{1/2 } $ and $C = \delta _0 . $
\end{theorem}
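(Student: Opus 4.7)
The plan is to exploit the perturbative structure $L = L_{\rm diag} - \mathbb{B}$ through an iterated Duhamel expansion, and then to obtain a lower bound for each term by replacing the Gaussian lower bound \eqref{e_j-positive} on the diagonal entries of $E_{\rm diag}$, and the uniform lower bounds $b_{ij}(y,\tau) \geq b_{*,ij} \geq 0$ on the coupling coefficients. The key algebraic observation that will make the sum collapse into the matrix exponential $e^{c(\tau-s)B_*}$ is the elementary identity
$$
\int_{\mathbb{R}} \frac{e^{-\alpha (y-z)^2/(\tau-u)}}{\sqrt{\tau-u}} \cdot \frac{e^{-\alpha (z-y_0)^2/(u-s)}}{\sqrt{u-s}}\, dz = \sqrt{\frac{\pi}{\alpha}}\cdot \frac{e^{-\alpha (y-y_0)^2/(\tau-s)}}{\sqrt{\tau-s}},
$$
which shows that the convolution of two Gaussian kernels of the same sharpness $\alpha = \epsilon_0$ is again a Gaussian of the total time, up to a constant factor $\sqrt{\pi/\epsilon_0}$.

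First, I would write down the standard Duhamel identity $L_{\rm diag}(E - E_{\rm diag}) = \mathbb{B}\,E$ (with vanishing initial trace), from which
$$
E = E_{\rm diag} + E_{\rm diag} * \mathbb{B} * E,
$$
where $*$ denotes the natural parabolic convolution in $(z,u)$. Iterating $N$ times produces
$$
E = \sum_{k=0}^{N} (E_{\rm diag}*\mathbb{B})^{k}*E_{\rm diag} + (E_{\rm diag}*\mathbb{B})^{N+1}*E,
$$
and Otsuka's theorem (conditions 1 and 2 are in force) guarantees that every entry of $E$, of $E_{\rm diag}$, and of $\mathbb{B}$ is $\geq 0$, so the remainder is coefficient-wise non-negative and may be dropped. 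This yields the entry-wise bound
$$
e_{ij}(y,\tau;y_0,s) \geq \sum_{k\geq 0}\, (E_{\rm diag}*\mathbb{B})^{k}*E_{\rm diag}\, (y,\tau;y_0,s)_{ij}.
$$

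Second, I would analyse the $k$-th term. Inserting $e_i(y,\tau;y_0,s) \geq \delta_0\, g_{\epsilon_0}(y-y_0,\tau-s)$ with $g_\alpha(y,t) := t^{-1/2} e^{-\alpha y^2/t}$, as well as $b_{i\,i'}(z,u) \geq b_{*,i i'}$, the $k$-th term is bounded below by
$$
\delta_0^{k+1}\!\!\sum_{i_1,\dots,i_{k-1}} b_{*,ii_1}\cdots b_{*,i_{k-1}j}\!\!\int_{s<u_k<\cdots<u_1<\tau}\!\!\int_{\mathbb{R}^k} g_{\epsilon_0}(y-z_1,\tau-u_1)\cdots g_{\epsilon_0}(z_k-y_0,u_k-s)\,d\underline{z}\,d\underline{u}.
$$
Using the convolution identity $k$ times, the spatial integrals telescope to $(\pi/\epsilon_0)^{k/2}\,g_{\epsilon_0}(y-y_0,\tau-s)$, which is independent of the $u_\ell$'s. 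The remaining time-integration over the ordered simplex gives $(\tau-s)^k/k!$, and the sum over indices recognises the matrix product $\sum_{i_1,\dots,i_{k-1}} b_{*,ii_1}\cdots b_{*,i_{k-1}j} = (B_*^{k})_{ij}$. Putting $c := \delta_0\sqrt{\pi/\epsilon_0}$, one obtains
$$
e_{ij}(y,\tau;y_0,s) \geq \delta_0 \sum_{k\geq 0} \frac{\bigl(c(\tau-s)\bigr)^k}{k!}\,(B_*^{k})_{ij}\;g_{\epsilon_0}(y-y_0,\tau-s) = \delta_0\,\bigl(e^{c(\tau-s)B_*}\bigr)_{ij}\,g_{\epsilon_0}(y-y_0,\tau-s),
$$
which is precisely \eqref{eq:LB_Phi} with $C=\delta_0$ and the asserted value of $c$.

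The two places that need the most care are (i) justifying the Neumann series representation rigorously for the class of parabolic systems under consideration, which is where the Eidel'man Gaussian upper bounds are used to control the tail $(E_{\rm diag}*\mathbb{B})^{N+1}*E$ and to ensure convergence of the iterated integrals, and (ii) carrying out the repeated Gaussian convolution cleanly, which I would handle by an induction on $k$ from the single identity displayed above. Once these two points are in place, the passage from the pointwise lower bound for the $k$-th term to the matrix-exponential factor is purely algebraic; this is the main novelty, turning Otsuka's mere non-negativity into a quantitative positivity statement governed by $B_*$.
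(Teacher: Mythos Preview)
Your proposal is correct and is essentially the paper's proof: the paper also expands $E$ as $E_{\rm diag}$ plus a Levi--parametrix series $E_{\rm diag}*\sum_{p\geq 1}\Phi_p$ with $\Phi_p=(\mathbb{B}E_{\rm diag})^{*p}$, proves by induction the lower bound $(\Phi_p)_{ij}\geq \delta_0^{\,p}(\pi/\epsilon_0)^{(p-1)/2}\frac{(\tau-s)^{p-1}}{(p-1)!}(B_*^p)_{ij}\,g_{\epsilon_0}$ via the same Gaussian-convolution identity, and then sums to obtain $\delta_0\,e^{c(\tau-s)B_*}\,g_{\epsilon_0}$. The only cosmetic difference is that the paper isolates the bound on $\Phi_p$ as a separate lemma and relies on absolute convergence of the parametrix series rather than on Otsuka's non-negativity to discard the remainder.
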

   
\begin{proof} Following \cite{Otsuka}, the fundamental solution $E(\tau ,s,  y , y_0  ) $ can be written as
$$
 E (y, \tau ; y_0 , s )    =E_{\rm diag } ( y, \tau ; y_0 , s ) + E_1 ( y , \tau ; y_0 , s ) ,
$$
where $E_{\rm diag } $ is the fundamental solution of the diagonal operator introduced above, and where the second part of the fundamental solution $E_1=(e_{1, ij}(y , \tau ; y_0 , s ) )_{1\leq i,j\leq d}$ is given by
\begin{equation} \label{eq:E_1}
E_1( y, \tau ; y_0 , s  ) =\int_s^\tau \! \!\int_\R E_{\rm diag } ( y, \tau ; z, t ) \Phi ( z, t ; y_0 , s ) \, dz dt ,
\end{equation}
where
\begin{equation} \label{eq:Phi}   
\Phi( y, \tau ; y_0 , s ) = \sum_{p\geq 1}\Phi_p( y, \tau ; y_0 , s) ,   
\end{equation}   
with   
\begin{equation} \label{eq:Phi_1}   
\Phi_1 ( y, \tau ; y_0 , s  ) =B(y, \tau ) E_{\rm diag } ( y, \tau ; y_0 , s ) ,   
\end{equation}
and
\begin{equation} \label{eq:Phi_p}
\Phi_p( y, \tau ; y_0 , s ) = \int_s^\tau\!\!\int_\R \Phi_1( y, \tau ; z, t ) \Phi_{p-1 } ( z, t ; y_0 , s ) dz dt .   
\end{equation}   
The series is absolutely convergent.   

\begin{lemma} \label{lemma:LB_Phi} For $p \geq 1 $,
\begin{equation} \label{eq:LB_Phi1}
\left( \Phi _p \right)_{i j }  (y , \tau ,  y_0 , s  ) \geq \delta _0 ^p \left( \frac{\pi }{\epsilon _0 } \right) ^{(p - 1 ) / 2 } \frac{(\tau - s )^{p - 1 } }{(p - 1 )! } \left( B_* ^p \right) _{i, j } \frac{\exp\para{-\epsilon_0\frac{\abs{y - y_0} ^2}{\tau-s}}}{\sqrt{\tau - s } } ,
\end{equation}
for $1\leq i, j \leq n . $
\end{lemma}

\begin{proof} By induction on $p . $ If $p = 1 $, the stated inequality is an immediate consequence of (\ref{eq:Phi_1}) and (\ref{e_j-positive}). Next, suppose that (\ref{eq:LB_Phi1}) holds for $p - 1 . $ Then, by (\ref{eq:Phi_p}) and (\ref{e_j-positive}),
\begin{eqnarray*}
&&\left( \Phi _p \right)_{ij } (y, \tau ; y_0 , s ) = \int _s ^{\tau } \int _{\mathbb{R } } \, \sum _k b_{i k } (y, \tau ) e_{0, k } (y , \tau ; z, t ) \left( \Phi _{p - 1 } \right)_{kj } (z, t ; y_0 , s ) \,dz dt \\   
&\geq & \delta _0 ^p \left(\epsilon _0 ^{-1 } \pi \right) ^{(p - 2 ) / 2 } \int _s ^{\tau } \, \frac{ (t - s )^{p - 2 } }{(p - 2 )! } \sum _k b_{*, ik } \left( B_* ^{p - 1 } \right)_{k j } \left( \int _{\mathbb{R } } \frac{e^{ - \epsilon _0 (y - z )^2 / (\tau - t ) } }   {\sqrt{\tau - t } }
\frac{e^{ - \epsilon _0 (z - y_0 )^2 / (t - s ) } }{\sqrt{t - s } } \, dz \right) dt \\
&=& \delta _0 ^p (\epsilon _0 ^{-1 } \pi )^{(p - 1 )/2 } \, (B_* ^p )_{ij } \, \frac{(\tau - s ) ^{p - 1 } }{(p - 1 )! } \frac{e^{ - \epsilon _0 (y - y_0 )^2 / (\tau - s ) } }{\sqrt{\tau - s } } .
\end{eqnarray*}
where we used the semi-group property of the classical heat-kernel\footnote{In the form: if $p_t (z) := t^{-1 / 2 } e^{- \epsilon _0 z^2 } $, then the convolution $p_{t_1 } * p_{t_2 } = \sqrt{\epsilon ^{-1 } \pi } p_{t_1 + t_2 } . $}.
\end{proof}
   
Returning to the proof of theorem \ref{thm:LB_Phi},  let us take $s = 0 $ to simplify notations. Then by (\ref{eq:E_1}), \ref{eq:Phi}) and (\ref{eq:LB_Phi1}), putting $c := \delta _0 (\epsilon _0 ^{-1 } \pi )^{1/2 } $,
\begin{eqnarray*}
e_{1 , ij } (y, \tau ; y_0 , 0 ) &\geq & \delta _0 \sum _{p \geq 1 } \int _0 ^{\tau } \int _{\mathbb{R } } e_{0, i } (y, \tau ; z, t ) c^{p - 1 } \frac{\tau ^{p - 1 } }{(p - 1 )! } \left( B_* ^p \right)_{ij } \frac{e^{ - \epsilon _0 (z - y_0 )^2 / t } }{\sqrt{t } } \, dz dt \\
&\geq & \delta _0 \left(\sum _{p \geq 1 } \frac{(c \tau )^p }{p! } B_* ^p \right) _{ij } \cdot \frac{e^{ - \epsilon _0 (y - y_0 )^2 / \tau } }{\sqrt{\tau } } ,
\end{eqnarray*}
where we used (\ref{e_j-positive}) and the semi-group property of the classical heat-kernel again. Adding $E_0 (y, \tau ; y_0 , 0 ) $, estimated from below by (\ref{e_j-positive}) times the identity matrix, the theorem follows.

\end{proof}

\section{Proof of lemma \ref{L6.2}}

Let us write the fundamental solution $E_{A_1 } (y, \tau ; z, s ) $ of the time-homogeneous operator $\partial _{\tau } - \mathcal{L }_{A_1 } $ as  $E (y, z, \tau - s ) = \left( e_{ij } (y, z , \tau - s ) \right)_{1 \leq i, j \leq n } . $    Then by Duhamel's principle,
\begin{equation} \label{eq:Duhamel}
\w (y, \tau ) = \int _0 ^{\tau } \int _{\mathbb{R } } E(y, z, s ) G(z) \V (z, \tau - s ) dz ds
\end{equation}
where we know that
\begin{equation}
|E (y, z, s ) | \leq C s^{-1/2 } e^{- c (y - z )^2 / s } \ \mbox{and } |E_y (y, z, s ) | \leq C s^{-1 } e^{- c (y - z )^2 / s } ,
\end{equation}
since $A_1 (y) $ is $C^1 $ and therefore certainly H\"older continuous. Here $E $ and $G = A_1 - A_2 $ are $n $ by $n $ matrices, and $\V = (v_1 , \ldots , v_n ) $ is a vector-valued function which is a column vector of the fundamental solution with pole in 0 of an auxilliary parabolic system: see (\ref{eq:aux_PDE}). Since $G $ is in fact diagonal, $G ={\rm diag } (G_1 , \ldots , G_n ) $, the $i $-th component of the integrand is $\sum _j e_{ij } (y, z, s ) v_j (z, \tau - s ) G_j (z) $, and if we now identify $G $ with the vector of its diagonal elements, we can interpret (\ref{eq:Duhamel}) as
\begin{equation}
\w (y, \tau ) = \int _0 ^{\tau } \, K_s (G ) (y) \, ds ,
\end{equation}
where $K_s $ is the integral operator on $\mathbb{R } $ with matrix-valued kernel $k_s (y, z ) = \left( e_{ij } (y, z, s ) v_j (z, \tau - s ) \right)_{i, j } $ acting on $L^2 (\mathbb{R } )^n $:   
\begin{equation}
K_s (G)(y ) := \int _{\mathbb{R} }  k_s (y, z) G(z) dz
\end{equation}
\medskip

\noindent 1. We begin by estimating the $L^2 $-norm of $\w (\cdot , \tau ) . $ Since $|| \w (\cdot , \tau ) ||_{L^2 (\mathbb{R } ) } \leq \int _0 ^{\tau } \, || K_s (G ) ||_{L^2 (\mathbb{R } ) } \, ds $, it suffices to estimate the $L^2 $-norm of each the operators $K_s . $ We will do using the following classical lemma of Schur, in a version for integral operators acting on vector-valued functions:   
   
\begin{lemma} \label{lemma:Schur} Let $K $ be an integral operator on $L^2 (\mathbb{R } ) $ with matrix-valued kernel $\left( k_{ij } (y, z ) \right) _{1 \leq i, j \leq n } . $ Then the operator-norm $|| K || $ of $K $ can be bounded by $|| K || \leq n \sqrt{C_1 C_2 } $, where
\begin{equation} \label{eq:Schur}
C_1 := C_1 (k) := \max _{i, j } \sup _y \int _{\mathbb{R } } \, | k_{ij } (y, z ) | \, dz , \ \ C_2 := C_2 (k) := \max _{i, j } \sup _z \int _{\mathbb{R } } \, | k_{ij } (y, z ) | \, dy .
\end{equation}   
\end{lemma}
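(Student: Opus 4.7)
The plan is to reduce the matrix/vector version to the classical scalar Schur test and then sum up, keeping track of the constants carefully to produce the factor $n$.

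First I would recall (and, for completeness, reprove) the scalar Schur test: if $T$ is an integral operator on $L^2(\mathbb{R})$ with scalar kernel $k(y,z)$, then
\[
\|T\|_{L^2\to L^2}\;\le\;\sqrt{C_1(k)\,C_2(k)}.
\]
This is obtained by writing, for $f\in L^2(\mathbb{R})$ and Cauchy--Schwarz applied to the factorisation $|k(y,z)|=|k(y,z)|^{1/2}\cdot|k(y,z)|^{1/2}$,
\[
|(Tf)(y)|^{2}\le\Bigl(\int|k(y,z)|\,dz\Bigr)\Bigl(\int|k(y,z)|\,|f(z)|^{2}\,dz\Bigr)\le C_1(k)\int|k(y,z)|\,|f(z)|^{2}\,dz,
\]
and then integrating in $y$ and applying Fubini together with the defining bound for $C_2(k)$.

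Next I would decompose the matrix-kernel operator into its $n^{2}$ scalar components. For $G=(G_1,\ldots,G_n)\in L^2(\mathbb{R})^n$,
\[
(KG)_i(y)=\sum_{j=1}^{n}(K_{ij}G_j)(y),\qquad
(K_{ij}h)(y):=\int_{\mathbb{R}}k_{ij}(y,z)\,h(z)\,dz.
\]
Applying the scalar Schur test to each $K_{ij}$, and using that by definition $C_1(k_{ij})\le C_1$ and $C_2(k_{ij})\le C_2$ for every pair $(i,j)$, I obtain $\|K_{ij}\|_{L^2\to L^2}\le\sqrt{C_1C_2}$ uniformly in $i,j$.

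Finally I combine these bounds via the triangle inequality in $L^2(\mathbb{R})$ and Cauchy--Schwarz on the finite $j$-sum:
\[
\|(KG)_i\|_{L^2}\le\sum_{j=1}^{n}\|K_{ij}G_j\|_{L^2}\le\sqrt{C_1C_2}\sum_{j=1}^{n}\|G_j\|_{L^2}\le\sqrt{n\,C_1C_2}\,\|G\|_{L^2(\mathbb{R})^n},
\]
where in the last step I used $\sum_j\|G_j\|_{L^2}\le\sqrt{n}\bigl(\sum_j\|G_j\|_{L^2}^{2}\bigr)^{1/2}$ together with the norm convention \eqref{eq:convention_vector-norm}. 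Squaring, summing over $i=1,\ldots,n$ and taking the square root yields
\[
\|KG\|_{L^2(\mathbb{R})^n}^{2}=\sum_{i=1}^{n}\|(KG)_i\|_{L^2}^{2}\le n\cdot n\,C_1C_2\,\|G\|_{L^2(\mathbb{R})^n}^{2},
\]
and hence $\|K\|\le n\sqrt{C_1C_2}$, as claimed.

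There is no real obstacle here: the argument is entirely elementary and the only subtlety is bookkeeping of the factor $n$, which arises once from the triangle inequality / Cauchy--Schwarz in the $j$-summation and once from summing the $L^2$-norms of the $n$ components of $KG$. One could sharpen the constant by a factor $\sqrt{n}$ via a double application of Cauchy--Schwarz in $j$ (inside the integrand), but the simpler $n\sqrt{C_1C_2}$ bound suffices for the use made of it in Appendix C.
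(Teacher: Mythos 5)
Your proof is correct and yields the claimed bound $\|K\|\le n\sqrt{C_1C_2}$; the paper states this lemma as a known vector-valued variant of the classical Schur test without supplying a proof, so there is no in-paper argument to compare against, but your reduction to the entrywise scalar Schur test, combined via the triangle inequality and Cauchy--Schwarz in the $j$-sum to match the squared-norm convention \eqref{eq:convention_vector-norm}, is exactly the natural route.

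One caveat about your closing parenthetical: the assertion that a double Cauchy--Schwarz in the $j$-summation inside the integrand would sharpen the constant to $\sqrt{n\,C_1C_2}$ is not correct. Applying Cauchy--Schwarz jointly over $(j,z)$ to the splitting $|k_{ij}|=|k_{ij}|^{1/2}|k_{ij}|^{1/2}$ gives
\[
|(KG)_i(y)|^2\le\Bigl(\sum_{j}\int|k_{ij}(y,z)|\,dz\Bigr)\Bigl(\sum_{j}\int|k_{ij}(y,z)|\,|G_j(z)|^2\,dz\Bigr)\le n\,C_1\sum_{j}\int|k_{ij}(y,z)|\,|G_j(z)|^2\,dz,
\]
and after integrating in $y$ and summing over $i$ one again arrives at $\|KG\|^2\le n^2C_1C_2\|G\|^2$, i.e.\ the same constant $n\sqrt{C_1C_2}$. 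In fact the factor $n$ is sharp under the stated hypotheses: if $k_{ij}(y,z)=\chi_{[0,1]}(y)\chi_{[0,1]}(z)$ for all $i,j$, where $\chi_{[0,1]}$ denotes the indicator of $[0,1]$, then $C_1=C_2=1$, while $K$ is the rank-one scalar operator $f\mapsto\chi_{[0,1]}\int_0^1 f$ tensored with the all-ones $n\times n$ matrix and therefore has operator norm exactly $1\cdot n=n$. Any genuine improvement would need decay of $C_1(k_{ij}),C_2(k_{ij})$ in $(i,j)$, which is not assumed.
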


By the Gaussian upper bounds for fundamental solutions (cf. Appendix B)  we know that
$$
|k_s (y, z ) | \leq C \frac{e^{ - c (y - z )^2 / s } }{\sqrt{s } } \frac{e^{- c z^2 / (\tau - s ) } }{\sqrt{\tau - s } } .
$$
Using the semi-group property of the fundamental solution of the classical heat equation, we then see that       
$$
C_1 (k_s ) = \sup _y C \frac{e^{- c y^2 / \tau } }{\sqrt{\tau } } = C/\sqrt{\tau } ,
$$
while an easy estimate shows that $C_2 (k_s ) \leq C / \sqrt{\tau - s } . $ Hence $|| K_s || \leq C (\tau (\tau - s ) )^{-1/4 } $, and therefore
\begin{equation} \label{eq:norm_w}
|| \w (\cdot , \tau ) ||_{L^2 (\mathbb{R } ) } \leq C \left( \int _0 ^{\tau } \left( \tau (\tau - s ) \right)^{-1 / 4 } ds \right) ||G ||_{L^2 (\mathbb{R } ) } = C \sqrt{\tau } \, ||G ||_{L^2 (\mathbb{R } ) } ,   
\end{equation}
with $C $ independent of $\tau . $
\medskip

\noindent 2. Next,
$$
\w _y (y , \tau ) = \int _0 ^{\tau } \, K^1 _s (G ) (y) \, ds ,
$$
where $K^1 _s $ is the integral operator with kernel
$$
k^1 _s (y, z ) = E_y (y, z, s ) v(z, \tau - s ) .
$$
We now have that
$$
|k^1 _s (y, z ) | \leq C \frac{e^{ - c (y - z )^2 / s } }{s } \frac{e^{- c z^2 / (\tau - s ) } }{\sqrt{\tau - s } } ,
$$
and therefore (simply multiply the previous estimates by $s^{-1 / 2 } $)
$$
C_1 (k^1 _s ) \leq \frac{C }{\sqrt{s \tau } } , \ \ C_2 (k^1 _s ) \leq \frac{C }{\sqrt{s (\tau - s ) } } .   
$$
It follows that   
\begin{equation} \label{eq:norm_w_y}
|| \w _y (\cdot , \tau ) ||_{L^2 (\mathbb{R } ) } \leq C \left( \int _0 ^{\tau } \frac{ds }{\left( \tau (\tau - s ) \right)^{1/4 } \sqrt{s } } \right) ||G ||_{L^2 (\mathbb{R } ) } = C ||G ||_{L^2 (\mathbb{R } ) } ,   
\end{equation}
with $C $ again independent of $\tau . $ Together with (\ref{eq:norm_w}), and with $\tau = \tau ^* $, this proves the first part of lemma \ref{L6.2}.   
\medskip

\noindent 3. We cannot in the same way show that $|| \w _{yy } (\cdot , \tau ) ||_{L^2 (\mathbb{R } ) } $ or $|| \w _{\tau } \cdot , \tau ) ||_{L^2 (\mathbb{R } ) } $ are bounded by $|| G ||_{L^2 (\mathbb{R } ) } $, since the singularity in $s = 0 $ becomes too strong: for example, $E_{yy } (y, z, s ) \leq C s^{-3/2 } e^{- c (y - z )^2 / s } $ we would end up with a $1/s $-singularity in (\ref{eq:norm_w_y}).  We therefore proceed differently by differentiating the PDE for $\w $, which gives   
$$
(\w_y )_{\tau } - \mathcal{L } (\w_y ) = G_y \V + G \V_y + Q_1 (\w ) ,
$$
with $Q_1 = [\partial _y , \mathcal{L } ] $ a first order differential operator. Write $\w_y = \w _1 + \w _2 + \w _3 $, where $\w _1 $, $\w _2 $ and $\w_3 $ solve the inhomogeneous PDE with right hand side $G_y \V $, $G \V _y $ and $\w _3 $ and $Q_1 (\w) $, respectively, and initial value 0 for $\tau = 0 . $ Then, by what we have already proven,   
$$
|| \w _1 (\cdot , \tau ) ||_{H^1 (\mathbb{R } ) } \leq C ||G_y ||_{L^2 (\mathbb{R } ) } .   
$$
We next show that
\begin{equation} \label{eq:norm_H^1_w_2}
||\w_2 (\cdot , \tau ) ||_{H^1 (\mathbb{R } ) } \leq C \tau ^{-1/2 } ||G ||_{L^2 (\mathbb{R } ) } .   
\end{equation}
Indeed, by Duhamel,
$$
\w_2 (y, \tau ) = \int _0 ^{\tau } \widetilde{K }_s (G) ds ,
$$
where $\widetilde{K }_s $ is the integral operator with kernel $\widetilde{k }_s (y, z ) = \left( e_{ij } ((y, z, s ) v_{j , z } (z, \tau - s ) \right)_{i, j } . $ Hence, the norm of the kernel and of its derivative with resepct to $y $ can be bounded by
$$
| \widetilde{k }_s (y, z ) | \leq C \frac{e^{ - c (y - z )^2 / s } }{\sqrt{s } } \frac{e^{- c z^2 / (\tau - s ) } }{\tau - s } ,
$$
and
$$
| \partial _y \widetilde{k }_s (y, z ) | \leq C \frac{e^{ - c (y - z )^2 / s } }{s } \frac{e^{- c z^2 / (\tau - s ) } }{\tau - s } .
$$
Proceeding as before, we find that
$$
|| \w _2 (\cdot , \tau ) ||_{L^2 (\mathbb{R } ) } \leq C \int _0 ^{\tau } \frac{ds }{\tau ^{1/4 } (\tau - s )^{3/4 } } ||G ||_{L^2 (\mathbb{R } ) } = C \, ||G ||_{L^2 (\mathbb{R } ) }   
$$
and
$$
|| \w _{2, y } ||_{L^2 (\mathbb{R } ) } \leq C \int _0 ^{\tau } \frac{ds }{\tau ^{1/4 } s^{1/2 } (\tau - s )^{3/4 } } ||G ||_{L^2 (\mathbb{R } ) } = (C / \sqrt{\tau } ) ||G ||_{L^2 (\mathbb{R } ) } ,   
$$
with constants $C $ which are independent of $\tau $, which proves (\ref{eq:norm_H^1_w_2}).

\bigskip

Finally, to bound the norm of $\w _3 $ we use the classical energy estimate:
\begin{eqnarray*}
|| \w _3 (\cdot , \tau ) ||_{H^1 (\mathbb{R } ) } ^2 &\leq & C || Q_1 (w) ||_{L^2 ((0, T ) \times \mathbb{R } ) } ^2 \\
&\leq & C \left( \int_0 ^{\tau } \, || \w (\cdot , s ) ||_{H^1 } \, ds \right) \\
&\leq & C \tau ||G ||_{L^2 (\mathbb{R } ) } ^2 ,
\end{eqnarray*}
by subsection 2.3 again. In conclusion, we have shown that   
$$
|| \w _y ||_{H^1 (\mathbb{R } ) } \leq C \max (\tau ^{-1/2 } , \tau ) \, \left( ||G ||_{L^2 (\mathbb{R } ) } + ||G_y ||_{L^2 (\mathbb{R } ) } \right) ,   
$$
which completes the proof of lemma \ref{L6.2} when specializing to $\tau = \tau ^*$.   
   

\end{document}